\newcommand{\cmark}{\ding{51}}%
\definecolor{mydarkblue}{rgb}{0,0.08,0.45}
\let\cite\citep %
\providecommand{\lin}[1]{\ensuremath{\left\langle #1 \right\rangle}}
\providecommand{\abs}[1]{\left\lvert#1\right\rvert}
  \providecommand{\R}{\mathbb{R}} %
  \newcommand{\sym}{\mathbb{S}}
\newcommand{\ones}{\bf 1}
\newcommand{\norm}[1]{\|#1\|}
\newcommand{\dotprod}[2]{\langle #1,#2 \rangle}
\newcommand{\Dotprod}[2]{\left\langle #1,#2 \right\rangle}
\newcommand{\trace}[1]{{\bf tr}\left(#1\right) }
\newcommand{\myspan}[1]{{\operatorname{span}}\left(#1\right) }
\newcommand{\E}[1]{\mathbb{E}\left[#1\right]}
\newcommand{\ED}[2]{\mathbb{E}_{#1}\left[#2\right]}
\newcommand{\EE}{\mathbb{E}}
\newcommand{\cN}{{\cal N}}
\newcommand{\cD}{{\cal D}}  
  \DeclareMathOperator*{\argmin}{arg\,min}
  \providecommand{\0}{\mathbf{0}}
  \providecommand{\1}{\mathbf{1}}
  \providecommand{\mB}{\mathbf{B}}
  \providecommand{\mH}{\mathbf{H}}
  \providecommand{\mI}{\mathbf{I}}
  \providecommand{\mW}{\mathbf{W}}
  \providecommand{\mX}{\mathbf{X}}
  \providecommand{\mZ}{\mathbf{Z}}
  \providecommand{\cD}{\mathcal{D}}
  \providecommand{\cM}{\mathcal{M}}
  \providecommand{\cN}{\mathcal{N}}
  \providecommand{\cO}{\mathcal{O}}
  \providecommand{\cQ}{\mathcal{Q}}
\providecommand{\mycomment}[3]{\todo[caption={},size=footnotesize,color=#1!20]{\textbf{#2: }#3}}%
\providecommand{\inlinecomment}[3]{%
  {\color{#1}#2: #3}}%
\newcommand\commenter[2]%
\newcommand\csname i#1\endcsname[1]{\inlinecomment{#2}{#1}{##1}}
\newcommand\csname #1\endcsname[1]{\mycomment{#2}{#1}{##1}}
\newtheorem{lemma}{Lemma}
\newtheorem{corollary}[lemma]{Corollary}
\newtheorem{remark}[lemma]{Remark}
\newtheorem{assumption}{Assumption}
\newtheorem{theorem}[lemma]{Theorem}
\newtheorem{example}[lemma]{Example}
\title{A Linearly Convergent Algorithm for Decentralized Optimization: Sending Less Bits for Free!}
\newcommand{\myenlarge}[2]{\parbox{\widthof{#1}+#2}{\centering #1}} %
\author{%
\myenlarge{Dmitry Kovalev}{1cm} \\ KAUST \and
\myenlarge{Anastasia Koloskova}{1cm} \\ EPFL \and
\myenlarge{Martin Jaggi}{1cm} \\ EPFL \and
\myenlarge{Peter Richt\'{a}rik}{1cm} \\ KAUST \and
\myenlarge{Sebastian U. Stich}{1cm}\\ EPFL
}
\date{}
\begin{document}

\maketitle

\begin{abstract}
Decentralized optimization methods enable on-device training of machine learning models without a central coordinator.
In many scenarios communication between devices is energy demanding and time consuming and forms the bottleneck of the entire system. \\
We propose a new randomized first-order method which tackles the communication bottleneck by applying randomized compression operators to the communicated messages. 
By combining our scheme with a new variance reduction technique that progressively throughout the iterations  reduces the adverse effect of the injected quantization noise, we obtain the first scheme that converges linearly on strongly convex decentralized problems while using compressed communication only.
We prove that our method can solve the problems without any increase in the number of communications compared to the baseline which does not perform any communication compression while still allowing for a significant compression factor which depends on the conditioning of the problem and the topology of the network. 
Our key theoretical findings are supported by numerical experiments. %
\end{abstract}

\section{Introduction}
We consider large-scale convex optimization problems of the form
\begin{align}
 f^\star := \min_{x \in \R^d} \frac{1}{n}\sum_{i=1}^n \bigl[ f_i(x) := \ED{\xi\sim \cD}{f_i(x,\xi)} \bigr] \,, \label{eq:1}
\end{align}
with private loss functions $f_i \colon \R^d \to \R$ split among $n$ machines (workers). 
This problem formulation covers for instance empirical risk minimization over finite datasets with equal loss functions but different data samples available on each device, but more generally also the stochastic setting where the workers have access to unbounded number of independent samples.

We assume that the workers are connected over an arbitrary network and that they can only exchange information with their immediate neighbors in the network.
This setting covers the classical parameter-server infrastructures, where all devices are connected to one central server~\cite{dean2012large}, the emerging federated learning paradigm~\cite{McMahan16:FedLearning,McMahan:2017fedAvg,kairouz2019federated}, and most generally, arbitrary decentralized communication topologies~\cite{Tsitsiklis1985:gossip,nedic2020review,Xin20:review}.

Communication is a key bottleneck when the working devices are connected over networks~\cite{seide2016cntk,Alistarh2017:qsgd}.
Quantization techniques enable optimization with compressed messages, hereby reducing the number of bits that have to be exchanges between the workers in each communication round. 
Whilst the first schemes of this type have been presented for centralized topologies only~\cite{Alistarh2017:qsgd,Wangni2018:sparsification}, many adaptations have been developed recently for optimization over arbitrary networks~\cite{Tang2018:decentralized,KoloskovaSJ19gossip,%
Tang2019:squeeze,KoloskovaLSJ19decentralized,Reisizadeh19:exact}.

All these decentralized schemes only converge \emph{sublinearly} when using compressed messages, i.e.\ they need $\cO\bigl(\nicefrac{1}{\epsilon^{\tau}}\bigr)$ iterations to reach accuracy $\epsilon$ for a parameter $0<\tau < \infty$ (most commonly $\tau \in \{1/2,1,2\}$). This is in sharp contrast to centralized approaches with parameter servers, where linear convergence rates of the form $\cO\bigl(\log \nicefrac{1}{\epsilon} \bigr)$ can be attained even with communication compression, for instance when the objective function is strongly convex~\cite{Horvath2019:vr}. 
We believe that there is an intrinsic reason for this limitation: so far, the schemes supporting communication compression and optimization over arbitrary networks have been derived by adapting the the decentralized gradient method and compressing the gradient updates. However, decentralized gradient descent cannot achieve linear convergence on strongly convex problems, even without communication compression \cite{shi2015extra,yuan16:dgd,Koloskova20}.

In this paper we develop a new algorithm for quantized decentralized optimization based on the primal-dual gradient method \cite{chen97:fwsplitting,boyd2011:admm} instead. This new technique allows to overcome limitations of prior schemes. Most importantly, we are able to prove linear convergence on strongly convex functions
for arbitrary unbiased randomized compressors.
Our main contributions can be summarized as follows:
\begin{enumerate}[leftmargin=\widthof{(a)k},nosep]
\item[(a)] We design novel\footnote{After preparation of this manuscript we became aware of parallel work~\cite{Liu2020:pd}. Their proposed algorithm is identical to \textbf{option B} (incremental primal update) in Algorithm~\ref{alg:1}.}
 decentralized optimization algorithms for problem~\eqref{eq:1}.
For $\mu$-strongly convex and $L$-smooth objective with condition number $\kappa := \nicefrac{L}{\mu}$, our main algorithm converges linearly and achieves and $\epsilon$ accurate solution after at most
\begin{align}
 \cO \left( \left(\omega + \kappa(\rho + \omega \rho_\infty) \right) \log\frac{1}{\epsilon}\right) \label{eq:243}
\end{align}
iterations, where  $\rho \geq 1$ denotes the
ratio between the largest and smallest non-zero eigenvalues of the Laplacian gossip matrix that encodes the communication topology, $\rho_\infty \leq \rho$ a new graph parameter we introduce later, and 
 $\omega \geq 0$ quantifies the quality of an arbitrary unbiased quantization operator. For the special case $\omega=0$ (no quantization) 
our rates recover the linear convergence rates of the primal-dual gradient method \cite{Bertsekas1982:book,Alghunaim2019:pd}. We provide further in-depth discussion of our convergence results in Section~\ref{sec:analysis}, see also Tables~\ref{tab:comparison}--\ref{tab:results}.
\item[(b)] Most notably, equation~\eqref{eq:243} reveals that for \emph{any} compression parameter $\omega \leq \min \bigl\{ \rho \rho_\infty^{-1} , \kappa \rho \bigr\}$
the complexity bound is $\cO\bigl(\kappa \rho \log \nicefrac{1}{\epsilon} \bigr)$---the same as for the primal-dual method without compression. This means, that any communication saving achieved by quantization is \emph{for free}, as they do not affect the total number of communication rounds but reduce the number of bits sent every round. We will show that the savings in communication can reach up to a factor of $\cO(n)$ on certain problems.
\item[(c)] We give algorithms and convergence analysis for four important cases: (A) a primal-dual method for dual-friendly problems, (B) an incremental method only using primal gradient oracles, and especially for the machine learning context (C) a method for stochastic gradient oracles and (D) a variance-reduced method when the local functions have finite-sum structure.
\item[(d)] We illustrate in numerical experiments that the performance of our schemes matches with the theoretical rates and compare against prior baselines.
\end{enumerate}

\section{Related Work}

As decentralized optimization problems are special cases of linearly constrained (consensus constraint) optimization problems, algorithms based on  augmented Lagrangian reformulations and primal dual algorithms, such as alternating method of multiplies (ADMM)~\cite{Glowinski1975,Gabay1976}, have been developed early on~\cite{boyd2011:admm}. Linear convergence rates for primal-dual methods on strongly convex problems have been derived and refined over the past decades~\cite{Bertsekas1982:book,Tsitsiklis1985:gossip,chen97:fwsplitting,%
shi14:admm,Alghunaim2019:pd}.
A variety of decentralized optimization schemes have been introduce and studied in the control and optimization communities~\cite{Duchi2012:distributeddualaveragig,Wei2012:distributedadmm,Iutzeler2013:randomizedadmm,%
Rabbat2015:mirrordescent,cola2018nips,%
Lian2017:decentralizedSGD,Wang2018:cooperativeSGD,Koloskova20}, see also the review articles~\cite{sayed14:networks,Xin20:review,nedic2020review}.
Limitations of the distributed gradient method, such as for instance not attaining linear convergence rates, have been pointed out for instance in~\cite{shi2015extra} and techniques such as EXTRA~\cite{shi2015extra} and gradient tracking~\cite{nedic2017achieving} have been developed to achieve linear convergence on strongly convex problems with primal methods as well.
Optimal decentralized algorithms based on accelerated gossip protocols have been presented in~\cite{Scaman2017:optimal} and \cite{Uribe:2018uk}.

\begin{table*}
\caption{Comparison to decentralized algorithms with communication compression and baseline results without compression. The rates show the most significant terms and indicate how many iterations are needed to reach $\norm{x-x^\star}^2\leq \epsilon$ for all nodes. Here $\tilde{\rho} \approx \rho$,
$\tilde \omega \geq \omega$
and $\tau \geq 1$ is an algorithm and function dependent constant, cf.\ the indicated references for definitions.
} 
\label{tab:comparison}
\begin{center}
\begin{minipage}{\linewidth}
\resizebox{0.95\linewidth}{!}{
\begin{tabular}{lccl} \toprule[1pt]
Algorithm\footnote{Convergence rates for the non-accelerated versions of these schemes.} \& Reference & linear rate  & quantization &  convergence to $\epsilon$-accuracy \\
\midrule
Decentralized Gradient Descent~
(Nedi\'{c}+ \citeyear{Nedic2009:distributedsubgrad}; Koloskova+ \citeyear{Koloskova20})
& & &  $\cO\Big( \frac{\sqrt{\kappa} \tilde{\rho}^2 }{\mu} \qquad\: \hspace{4mm} \cdot \frac{1}{\sqrt{\epsilon}} \Big)$ \\[1pt]
QDGD~(Reisizadeh+ \citeyear{Reisizadeh19:exact})
&  & \cmark &  $\cO\Big(\frac{\kappa^2  \tilde{\rho}^4 L^4 + \tilde{\omega}^2}{\mu^2} \hspace{4mm} \cdot \frac{1}{\epsilon^2}  \Big)$   \\[1pt]
Choco-SGD~(Koloskova+ \citeyear{KoloskovaSJ19gossip})
&  & \cmark & $\cO\Big( \frac{\sqrt{\kappa} \tilde{\rho}^2 (1+\omega) }{\mu} \hspace{4mm}  \cdot \frac{1}{\sqrt{\epsilon}} \Big)$ \\[1pt]
EXTRA~(Shi+ \citeyear{shi2015extra}),
Gradient Tracking~(Qu+ \citeyear{qu2016:tracking}; Pu+ \citeyear{Pu2020:tracking})
& \cmark &  & $\cO \Big(( \kappa^\tau \tilde\rho^2 \phantom{\: \omega \kappa \rho_\infty} ) \cdot  \log \frac{1}{\epsilon}\Big)$ \\[1pt]
Primal Dual Gradient Method~%
(Scaman+ \citeyear{Scaman2017:optimal}; Alghunaim~\citeyear{Alghunaim2019:pd})
& \cmark &  &  $\cO \Big( ( \kappa\rho \phantom{\: + \: \omega \kappa \rho_\infty} ) \cdot \log \frac{1}{\epsilon}\Big)$  \\[1pt]
\textbf{this paper} & \cmark & \cmark & $\cO \Big( (\kappa\rho +  \omega \kappa \rho_\infty) \cdot \log \frac{1}{\epsilon}\Big)$ 
 \\[1pt]  \bottomrule[1pt]
\end{tabular}
}
\end{minipage}
\end{center}
\end{table*}

\paragraph{Quantization.}
Quantization techniques allow for (lossy) compression of the messages that are exchanged between the agents to reduce the number of bits that need to be exchanged in each round. 
Quantization has emerged in recent years as an important tool 
in parallel and distributed machine learning~\cite{Seide2015:1bit,Strom2015:1bit,Alistarh2017:qsgd,Wen2017:terngrad}.
Whilst these early schemes have suffered from increased variance due to the randomized compression schemes, schemes based on error-feedback can compensate these effects and attain faster convergence~\cite{Alistarh2018:topk,Stich2018:sparsifiedSGD,KarimireddyRSJ2019feedback,%
StichK19delays} on centralized network topologies.

Quantization in the context of decentralized optimization has first been studied for the decentralized consensus problem where the agents aim to collaboratively compute the average of private data vectors. The effects of various quantization techniques have been studied in~\cite{Xiao2005:drift,Nedic2008:quantizationeffects,%
Carli2010:quantizedconsensus} and many different techniques have been proposed to address quantization errors, such as decreasing stepsizes or adaptive coding schemes
\cite{Carli2010:codingschemes,Yuan2012:distributedquant,Reisizadeh19:exact}. 
Only recently, a first scheme with linear convergence to the exact solution was presented~\cite{KoloskovaSJ19gossip}. However, this algorithm does not converge linearly on arbitrary strongly convex optimization problems that we consider here.
For more general, non-convex problems, further schemes with communication compression have been proposed by~\citet{Tang2018:decentralized,Tang2019:squeeze,KoloskovaLSJ19decentralized}.

\paragraph{Variance Reduction.}
Variance reduction for finite-sum structured problems 
has been introduced in~\cite{Johnson:2013svrg,Defazio2014:saga} and previously been applied to the closely related saddle-point problems~\cite{palaniappan16:svrg} and specifically also for decentralized consensus optimization~\cite{Mokhtari2016:dsa,Xin2019:GTSVRG}.
Recently, optimal algorithms for decentralized finite-sum optimization have been presented~\cite{Hendrikx2020:decentralizedVR}.
Variance reduction and in combination with communication compression has previously been studied in the context of distributed optimization with a parameter server only~\cite{Horvath2019:vr}. This method relies on efficient (and uncompressed) broadcast communication which we avoid here by supporting a fully decentralized topology.

\section{Setup}
We now specify the problem formulation, assumptions, and define several key concepts that will be used throughout the paper.

\subsection{Regularity Assumptions}
\begin{assumption}\label{assumption:convex-and-smooth}
Each cost function $f_i \colon \R^d \to \R$ is $\mu$-strongly convex and $L$-smooth, for parameters 
$0 < \mu \leq L$ and condition number $\kappa := \nicefrac{L}{\mu}$. That is $\forall~x,y~\in~\R^d, i \in [n]$:
 		\begin{align}
 			f_i(y) &\geq f_i(x) + \dotprod{\nabla f_i(x)}{y-x} + \frac{\mu}{2} \norm{y-x}_2^2\,, \label{eq:22}
 		\end{align}
 		\begin{align}
 		f_i(y) &\leq f_i(x) + \dotprod{\nabla f_i(x)}{y-x} + \frac{L}{2} \norm{y-x}_2^2\, . \label{eq:23}
 		\end{align} 		
\end{assumption}

\noindent Sometimes we will also consider the stochastic setting:
 	\begin{align}\label{eq:19}
	 	f_i(x) &= \ED{\xi\sim \cD}{f_i(x,\xi)}\,, & & \forall i \in [n],
 	\end{align}
where only stochastic gradients $\ED{\xi\sim \cD}{\nabla f_i(x,\xi)} = \nabla f_i(x)$ are available. In this case we do need an additional assumption on the strength of the noise:
	 \begin{assumption}[Bounded Variance and Smoothness]\label{assumption:stochastic}
	  Function $f_i(x,\xi)$ is $L$-smooth in expectation and the stochastic variance at the optimum $x^\star := \argmin f(x)$ is bounded. That is,  for all $i \in [n]$ here exist $\sigma_i^2 \in \R_+$, such that
	 	\begin{align}\label{eq:20}
	 		\ED{\xi \sim \cD}{\norm{\nabla f_i(x^\star,\xi) - \nabla f_i(x^\star)}_2^2} \leq \sigma_i^2\,,
	 	\end{align}
	 	is bounded. We define $\sigma^2 := \frac{1}{n}\sum_{i=1}^n \sigma_i^2$. Further, smoothness implies the inequality
	 	\begin{align}\label{eq:21}
 			\ED{\xi\sim\cD}{\norm{\nabla f_i(x,\xi) - \nabla f_i(y,\xi)}_2^2}        
 			\leq 2LB_{f_i}(x,y)\,,
 		\end{align}
 		$\forall x,y \in \R^d, i \in [n],$ where $B_{f_i}(x,y)$ is a Bregman divergence 
   			$B_{f_i}(x,y):=
 			f_i(x) - f_i(y) - \dotprod{\nabla f_i(y)}{x-y}$.
	 \end{assumption} 
 	
 	\begin{remark}
    For the special case of finite-sum structured problems on each worker, $
 		f_i(x) = \frac{1}{m}\sum_{j=1}^m f_{ij}(x)$, equation~\eqref{eq:21} becomes  	
 	\begin{align}\label{eq:31}
 		\textstyle \frac{1}{m} \sum_{j=1}^m \norm{\nabla f_{ij}(x) - \nabla f_{ij}(y)}_2^2 \leq 2LB_{f_i}(x,y)\,,
 	\end{align}
 	 	$\forall x,y \in \R^d$, $i \in [n]$.
 	\end{remark}

\subsection{Optimization over Networks}
We model the network topology as an undirected graph $G=([n],E)$ where $[n]:=\{1,\dots,n\}$ denotes the index set of the agents and $E \subset [n] \times [n]$ a set of pairs of communicating agents, $(i,j) \in E$ if any only if $(j,i) \in E$ (symmetric). If there exists an edge from agent $i$ to agent $j$ they may exchange information along this edge. Thus, agent $i$ may send or receive messages from all its neighbors $\cN_i = \left\{j \in [n] \mid (i,j) \in E \right\}$. 
We encode the communication links in a weighted Laplacian $\mW \in \sym^{n}_+$: 
	\begin{equation}
		\mW_{ij} = \begin{cases}
		-w_{ij}, & i \neq j, (i,j) \in E\\
		0, &i \neq j, (i,j)\notin E\\
		\sum_{l \in \cN_i} w_{il}, & i=j
		\end{cases}.
	\end{equation}
The mixing matrix is positive semidefinite $\mW \in \sym^{n}_+$, respects the graph structure, $\mW_{ij} \neq 0$ only if $(i,j) \in E$, and $\ker \mW = \myspan{\1}$, where $\1 =(1,\dots,1)^\top$. 
We denote by $\lambda_{\rm min}^+(\mW)$ the smallest non-zero eigenvalue of $\mW$ and by $\lambda_{\rm max}(\mW)$ its largest eigenvalue. We define $\rho := \nicefrac{\lambda_{\rm max}(\mW)}{\lambda_{\rm min}^+(\mW)}$ to 
be the ratio between the largest and the smallest non-zero eigenvalue of $\mW$,
and
 $\rho_\infty := \nicefrac{\max_{(i,j)\in E} w_{ij}}{\lambda_{\rm min}^+(\mW)}$ the maximum normalized edge weight.

\begin{remark} \label{rem:bounds}
It holds $\rho_\infty \leq \rho$ and the gap $\rho \rho_\infty^{-1} \geq 1$ can reach size $\Theta(n)$.
\end{remark}
\begin{proof}
For any Laplacian, we have\footnote{Folklore; this bound can be shown by considering Rayleigh quotients $\Delta = \max_{i \in [n]} e_i^\top \mW e_i \leq \lambda_{\rm max}(\mW)$.}
 $\Delta \leq \lambda_{\rm max}(\mW)$ for maximal weighted degree $\Delta:=\max_{i \in [n]} w_{ii}$. 
As $\max_{(i,j)\in E} w_{ij} \leq \max_{i \in [n]} w_{ii} = \Delta$, it follows $\rho_\infty \leq \rho$. For the second claim, consider a $k$-regular graph, for a parameter $1 \leq k \leq n-1$, and uniform weights, $w_{ij}=1$ for $(i,j) \in E$. Then $\max_{(i,j)\in E} w_{ij} = \frac{\Delta}{k}$, and $\rho \rho_\infty^{-1} \geq k$.
\end{proof}

\begin{remark}
The consensus constraint, $x_i = x_j$ can compactly be written as $ \mW\bigl[x_1, \cdots,x_n\bigr] = \0$ in matrix form if the graph is connected.
This observation can be utilized to derive the standard saddle point reformulations of problem~\eqref{eq:1}, see for instance~\cite{Lan2018:decentralized,Alghunaim19}.
\end{remark}

\subsection{Unbiased Quantization}
We consider unbiased randomized quantizers $\cQ \colon \R^d \to \R^d$ as for instance in~\cite{Alistarh2017:qsgd,Wangni2018:sparsification,Horvath2019:vr} with the following assumption on their variance.

\begin{assumption}[$\omega$-quanitzation]\label{a:quantization}
There exists a parameter $\omega \geq 0$ such that for all $x \in \R^d$,
\begin{align}
 \E{ \cQ(x) } &= x\,, &
 \E {\norm{\cQ(x)-x}^2} &\leq \omega \norm{x}^2. \label{def:omega}
\end{align}
\end{assumption}
This general notion comprises many important examples of quantization operators currently used in applications. Below we just name a few (that we later use in the numerical experiments). However, it is important to note that our proposed method does \emph{not} rely a specific choice of quantization operator but can be used in combination with any arbitrary unbiased quantization scheme that satisfies Assumption~\ref{a:quantization}.

\begin{example}[rand-$k$ and dit-$k$]\label{ex:compr} Example compression operators and coding length, assuming that a single floating-point scalar is encoded with $b$ bits with negligible loss in precision.
\begin{itemize}[leftmargin=12pt,nosep]
 \item[--] no compression ($\omega =0$). Each message has size $db$ for this standard baseline.
 \item[--] {\normalfont rand}-$k$: random $k$-sparsification ($\omega = \frac{d}{k}-1$)~\cite{Suresh2017:randomizedDME,Wangni2018:sparsification,Stich2018:sparsifiedSGD}. $\cQ(x):=\frac{d}{k} \cM(x)$, where $\cM(x)$ randomly selects $k$ coordinates of $x$ and masks the others to zero. The sparse vectors can be encoded with $kb + k \log d$ bits (non-zero coordinates and their indices).
 \item[--] {\normalfont dit}-$k$: random $s$-dithering~($\omega = \min\bigl\{\frac{d}{s^2},\frac{\sqrt{d}}{s}\bigr\}$) ~\cite{Goodall1951:randdithering,Roberts1962:randdithering,Alistarh2017:qsgd}.  Each coordinate of the normalized vector $x/\norm{x}$ is randomly rounded to one of $s$ quanitzation levels, (often $s = 2^{k-1}-1$ for integer $k$, so that the levels can be encoded with $k-1$ bits, plus one bit for the sign),
\begin{align*}
\cQ(x) = \textstyle \operatorname{sign}(x) \cdot \norm{x}_2 \cdot \frac{1}{s}\cdot \left\lfloor s \frac{ \abs{x}}{\norm{x}_2} + \xi \right\rfloor %
\end{align*}
for random variable $\xi \sim_{\rm u.a.r.} [0,1]^{d}$. As a special case for $s=2$ one recovers Terngrad~\cite{Wen2017:terngrad}. A trivial upper bound for the encoding length is $dk + b$, but exploiting sparsity (encoding only non-zero quantized values and their indices) this bound can be improved to 
$ \smash{\tilde\cO(s(s+\sqrt{d})} + b)$~\cite{Alistarh2018:topk}. %
\end{itemize}
\end{example}

\section{Algorithm}

\begin{figure*}[t]
\begin{minipage}{\linewidth}
	\begin{algorithm}[H]
		\caption{Four Decentralized Quantized Optimization Algorithms}
		\label{alg:1}
		\begin{algorithmic}[1]
			\State {\bf Initialization}:
			$w_{ij}=w_{ji}  > 0$ for $(i,j) \in E$,
			$z_1^0, \ldots, z_n^0 \in \R^d$ such that $\sum_{i=1}^nz_i^0 = 0$,\\
			$x_1^0, \ldots, x_n^0 \in \R^d$,
			$h_1^0, \ldots, h_n^0 \in \R^d$,
			$\theta>0$, $\alpha  >0$, $\eta > 0$
			\For{$k=0,1,2,\ldots$}
				\For{$i=1,\ldots,n$}{ in parallel on each node \hfill $\triangledown$ 4 options:}
					\State $\bullet$~$x_i^{k+1} = \nabla f_i^\star(z_i^k)$ \Comment{\textbf{Option A} (dual update)} \label{line:1}
					\State $\bullet$~$x_i^{k+1} = x_i^k - \eta(\nabla f_i(x_i^k) - z_i^k)$  \Comment{\textbf{Option B} (incremental primal update)} \label{line:2}
					\State $\bullet$~Sample random $\xi_i^k\sim \cD$
					\State \phantom{$\bullet$~}$x_i^{k+1} = x_i^k - \eta(\nabla f_i(x_i^k, \xi_i^k) - z_i^k)$ \Comment{\textbf{Option C} (stochastic primal update)} \label{line:3}
					\State $\bullet$~Sample $j_i^k \in \{1,\ldots,m\}$ uniformly at random
					\State \phantom{$\bullet$~}$g_i^k = \nabla f_{ij_i^k}(x_i^k) - \nabla f_{ij_i^k}(w_i^k) + \nabla f_i(w_i^k)$
					\State \phantom{$\bullet$~}$w_i^{k+1} = \begin{cases}
						x_i^k,&\text{ with probability } \frac{1}{m}\\
						w_i^k,&\text{ with probability } 1 - \frac{1}{m}
					\end{cases}$ \label{line:5}
					\State \phantom{$\bullet$~}$x_i^{k+1} = x_i^k - \eta(g_i^k - z_i^k)$  \Comment{\textbf{Option D} (finite-sum structured problems)} \label{line:4}
				
					\For{$j \in \cN_i$} 
						\State $\Delta_{ij}^k = \cQ(x_i^{k+1} - h_i^k) + h_i^k$
						\hfill $\triangleright$ prepare quantized dual updates \label{line:6}
					\EndFor
					
					\State $h_i^{k+1} = h_i^k + \alpha \cQ(x_i^{k+1} - h_i^k)$ \hfill{(communication with neighbors)} \label{line:7}
				\EndFor  
				\For{$i=1,\ldots,n$}{ in parallel on each node \hfill {\hfill $\triangledown$ update dual variables}}
					\State $z_i^{k+1} = z_i^k - \theta \sum\limits_{j \in \cN_i} w_{ij}(\Delta_{ij}^k - \Delta_{ji}^k)$ \label{line:8} \hfill{(communication with neighbors)}
				\EndFor
			\EndFor
		\end{algorithmic}
	\end{algorithm}
\end{minipage}	
\end{figure*}

We give the pseudocode for our proposed schemes in Algorithm~\ref{alg:1} above. We will give convergence rates for four different choices of updating the variables $x_i^k$ (in this notation $i \in [n]$ range over the nodes, and $k \geq 0$ over the iterations).

\textbf{Option~A} is applicable only if the dual functions $f_i^* \colon \R^d \to \R$ of each $f_i$ are known at their gradients can  be evaluated efficiently.\footnote{The convex conjugate of $f_i^* \colon R^d \to \R$ of $f_i$ is defined as $f_i^*(z):=\sup_{x \in \R^d} (\lin{x,z}-f_i(x))$.}  

\textbf{Option~B} maintains dual variables $z_i^k$ that are incrementally  updated instead (accessing primal gradient $\nabla f(x_i^k)$ only). Similarly to the incremental version of the classic primal-dual gradient method, we will have  $z_i^k \to z_i^+ := \nabla f_i(x^\star)$ for $k \to \infty$, which explains the intuition behind the $z_i^k$ variables.

 \textbf{Option~C} is applicable when only stochastic gradient oracles are available. 
 
 \textbf{Option~D} applies bias-corrected gradient updates for finite-sum structured $f_i$'s (analogous to the bias corrected updates in SVRG~\cite{Johnson:2013svrg}). Full batch gradients are re-computed after a random number of epochs~\cite{Hannah2018:breaking}.
 
 We give the convergence rates for these variants  in Section~\ref{sec:analysis} below (see also Table~\ref{tab:results}).

The updates on lines~\ref{line:1}--\ref{line:4} (depending on the chosen option) are performed in parallel on each agent. The auxiliary vectors $h_i^k$ updated on line \ref{line:7} are crucial component in our scheme that are required to achieve linear convergence: we will show in the appendix that $\smash{h_i^k} \to x^\star$ for ($k \to \infty$), so that for the quantization on line~\ref{line:6} we will be able to show (by virtue of~\eqref{def:omega}) that the quantization noise reduces linearly to zero as $x_i^k \to x^\star$ for ($k \to \infty$). This would not be possible when quantizing the iterates $x_i^k$ directly.

\paragraph{Implementation Details.}
It is easy to see that only quantized vectors need to be exchanged between the clients (every node needs to send two quantized vectors to each of its neighbors). To see this, assume that the vectors $h_i^{k}$ are known to all neighbors of node $i$ (maintaining $h_i^k$ requires only quantized updates as per line~\ref{line:7}: $h_i^{k+1}-h_i^k = \alpha q$, where $q$ is a quantized vector). The update on line~\ref{line:8} can be rewritten as $$z_i^{k} - z_i^{k+1} = \theta \sum_{j \in \cN_i} \left(h_i^k - h_j^k + q_i - q_j \right),$$ where $q_i,q_j$ are quantized vectors. Further note that the memory requirement is quite low per node: each agent needs to store its local copies of $x_i^k, z_i^k, h_i^k$ and $\bar h_i^k := \frac{1}{|\cN_i|} \sum_{j \in \cN_i} h_j^k$ (but not each $h_j^k$ individually). This memory efficient implementation is similar as the one explained in~\cite{KoloskovaSJ19gossip}.

\begin{table*}[t]
\resizebox{\linewidth}{!}{
\begin{tabular}{lll}
\toprule
Setting  & Convergence Rate, $\tilde \cO(\cdot)$ hides logarithmic factors & Reference \\ \midrule
\textbf{A}, dual $\nabla f_i^\star(z)$ available %
& \multirow{2}{*}{$\cO \left( (\omega + \kappa(\rho + \omega \rho_\infty))\log \frac{1}{\epsilon} \right)$} & Theorem~\ref{thm:A} \\
\textbf{B}, primal $\nabla f_i(x)$ available %
& &  Theorem~\ref{thm:B} \\
\textbf{C}, stochastic $\nabla f_i(x,\xi)$ available %
& $\tilde \cO \left(\omega + (\rho + \omega \rho_\infty)\left(\kappa + \frac{\sigma \sqrt{1+\omega}}{\sqrt{\epsilon} \mu} + \frac{\sigma^2 (\rho + \omega \rho_\infty)}{\epsilon \mu^2} \right)  \right)$ & Theorem~\ref{thm:C} \\
\textbf{D}, finite sum $f_i(x)=\frac{1}{m}\sum_{j=1}^m f_{ij}(x)$ %
& $\cO \left( (m + \omega + \kappa(\rho + \omega \rho_\infty))\log \frac{1}{\epsilon} \right)$ & Theorem~\ref{thm:D} \\
\bottomrule
\end{tabular}
}
\caption{Summary of the convergence results for Algorithm~\ref{alg:1} to reach accuracy $\norm{x - x^\star}^2 \leq \epsilon$ on all nodes. The depicted results are for stepsizes $\alpha = \frac{1}{\omega +1}$, $\eta=\frac{1}{L}$ and $\theta =\Theta\bigl(\frac{\mu}{\lambda_{\max}(\mW) + \omega\max_{(i,j)\in E}w_{ij}}\bigr)$ for option A, B and D and chosen as in equation~\eqref{eq:26} for option C. }
\label{tab:results}
\end{table*}	

\section{Convergence Analysis}
\label{sec:analysis}

We summarize the convergence results of Algorithm~\ref{alg:1} in Theorem~\ref{thm:results}. All proofs are given in the supplementary materials, restated as Theorems~\ref{thm:A}, \ref{thm:B}, \ref{thm:C}, \ref{thm:D}. %

\begin{theorem}\label{thm:results}
	Under Assumptions~\ref{assumption:convex-and-smooth}--\ref{a:quantization}, for any given $\epsilon > 0$ Algorithm~\ref{alg:1} with stepsizes $\alpha = \frac{1}{\omega +1}$, $\eta=\frac{1}{L}$ and $\theta =\Theta\bigl(\frac{\mu}{\lambda_{\max}(\mW) + \omega\max_{(i,j)\in E}w_{ij}}\bigr)$ for option A, B and D and chosen as in equation~\eqref{eq:26} for option C reaches accuracy $\norm{x - x^\star}^2 \leq \epsilon$ on all nodes after the following number of iterations $T$:
	
	\textbf{Options A/B:} dual $\nabla f_i^\star(z)$ / primal $\nabla f_i(x)$ available
	\begin{align*}
	T= \cO \left( (\omega + \kappa(\rho + \omega \rho_\infty))\log \frac{1}{\epsilon} \right)
	\end{align*}
	
	\textbf{Option C:} stochastic $\nabla f_i(x,\xi)$ available\par \vspace{-0.3cm}
	\resizebox{\linewidth}{!}{\parbox{1.1\linewidth}{
	\begin{align*}
	T=\tilde \cO \left(\omega + (\rho + \omega \rho_\infty)\left(\kappa + \frac{\sigma \sqrt{1+\omega}}{\sqrt{\epsilon} \mu} + \frac{\sigma^2 (\rho + \omega \rho_\infty)}{\epsilon \mu^2} \right)  \right)
	\end{align*}
	}}
	
	\textbf{Option D:} finite sum $f_i(x)=\frac{1}{m}\sum_{j=1}^m f_{ij}(x)$
	\begin{align*}
	T=\cO \left( (m + \omega + \kappa(\rho + \omega \rho_\infty))\log \frac{1}{\epsilon} \right),
	\end{align*}
	where $\tilde \cO(\cdot)$ hides logarithmic factors.
\end{theorem}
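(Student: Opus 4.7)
The plan is to analyze Algorithm~\ref{alg:1} through a stacked matrix reformulation and a multi-term Lyapunov argument. Collecting the local iterates into $X^k:=[x_1^k;\dots;x_n^k]\in\R^{n\times d}$, and analogously $Z^k,H^k$, the communication step on line~\ref{line:8} becomes $Z^{k+1}=Z^k-\theta\,\mW(H^k+\cQ(X^{k+1}-H^k))$ and line~\ref{line:7} reads $H^{k+1}=H^k+\alpha\,\cQ(X^{k+1}-H^k)$, with $\cQ$ applied row-wise. The target fixed point is $X^\star=\1(x^\star)^\top$, $H^\star=X^\star$, and $Z^\star$ with rows $z_i^\star=\nabla f_i(x^\star)$ satisfying $\sum_i z_i^\star=0$ (so $Z^\star\in\ker(\mW)^{\perp}$ row-wise). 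The primal update in each option is designed so that when $z_i^k=z_i^\star$ and $x_i^k=x^\star$, the step is stationary.

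I would introduce the Lyapunov potential
\begin{align*}
\Psi^k \;=\; \|X^k-X^\star\|_F^2 \;+\; c_1\,\|Z^k-Z^\star\|_{\mW^+}^2 \;+\; c_2\,\|X^k-H^k\|_F^2,
\end{align*}
where $\mW^+$ is the pseudoinverse on $\ker(\mW)^{\perp}$, and $c_1,c_2>0$ are to be tuned. The goal is a one-step contraction $\E\Psi^{k+1}\le(1-r)\Psi^k$ with $r^{-1}=\cO(\omega+\kappa(\rho+\omega\rho_\infty))$. Three ingredients feed into this. First, the $h$-update with $\alpha=1/(\omega+1)$ is the standard compressed-consensus contraction: using~\eqref{def:omega} together with unbiasedness, $\E\|X^{k+1}-H^{k+1}\|_F^2\le(1-\alpha/2)\|X^{k+1}-H^k\|_F^2$ plus a controlled $\|X^{k+1}-X^\star\|_F^2$ term. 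Second, the primal step provides $\mu$-strong-convexity contraction plus error terms in $\|Z^k-Z^\star\|_F^2$; Options A and B use~\eqref{eq:22}--\eqref{eq:23} (co-coercivity of $\nabla f_i$, respectively $\nabla f_i^\star$), Option~C introduces an additive $\eta^2\sigma^2$ via~\eqref{eq:20}, and Option~D replaces the gradient by the SVRG estimator, yielding a bias-free update with controlled second moment via~\eqref{eq:31}. Third, the dual update in the $\mW^+$-norm expands as contraction $-2\theta\lambda_{\min}^+(\mW)\|Z^k-Z^\star\|_{\mW^+}^2$ minus a primal-dual cross term, plus a penalty $\theta^2\|\mW\,\cQ\text{-noise}\|_{\mW^+}^2$; the crucial observation is that $\mW\, M\, \mW^+\mW\, M^\top$ for an edge-noise matrix $M$ is bounded by $\max_{(i,j)\in E}w_{ij}\cdot\lambda_{\max}(\mW)\cdot\|M\|_F^2$, which is exactly where $\rho_\infty$ enters rather than a second factor of $\rho$.

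With the three one-step bounds combined, the balancing of constants $c_1,c_2=\Theta(1/\mu)$, $\theta=\Theta(\mu/(\lambda_{\max}(\mW)+\omega\max_{(i,j)\in E}w_{ij}))$, and $\eta=1/L$ cancels the cross terms and produces the contraction rate $r=\Omega(1/(\omega+\kappa(\rho+\omega\rho_\infty)))$, proving Options A and B. For Option~D the Lyapunov function is augmented by $c_3\sum_i\|\nabla f_{ij}(w_i^k)-\nabla f_{ij}(x^\star)\|^2$ summed over the random index $j$, whose expected update (via the refresh on line~\ref{line:5} with probability $1/m$) introduces the $+m$ additive term. For Option~C the irreducible variance $\sigma^2$ prevents linear convergence; instead, one chooses a tail-averaged or decreasing schedule $\theta_k$ as in~\eqref{eq:26} to trade off the contraction with the $\eta^2\sigma^2$ noise floor, yielding the $\tilde\cO(\sigma^2/(\epsilon\mu^2))$ term after standard lemma-style manipulations.

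The hard part will be step three: tracking the quantization variance through the dual update in the right norm so that the penalty scales with $\rho_\infty$ rather than $\rho$. The naive bound $\|\mW\,\cdot\,\|_{\mW^+}^2\le\lambda_{\max}(\mW)\|\cdot\|_2^2$ only yields $(1+\omega)\rho$ in the final rate, which loses the separation $\omega\rho_\infty$ advertised in~\eqref{eq:243}. Exploiting the Laplacian's edge decomposition $\mW=\sum_{(i,j)\in E}w_{ij}(e_i-e_j)(e_i-e_j)^\top$ and the independence of the per-row quantization noise across edges is what lets the $\max w_{ij}$ factor appear in place of $\lambda_{\max}(\mW)$. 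Getting this bookkeeping right, while also ensuring that the cross terms between the primal contraction, $h$-contraction, and dual penalty close up with consistent constants, is the main technical obstacle; the remaining option-specific modifications (stochastic noise for C, SVRG for D, dual oracle for A) slot into the same skeleton with only localized changes to the primal contraction bound.
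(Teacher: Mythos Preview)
Your high-level plan (three-term Lyapunov, primal contraction, dual step in the $\mW^\dagger$-norm, compressed $h$-update) matches the paper's skeleton. But your matrix reformulation of the dual step is wrong in a way that breaks exactly the part you flag as hard. You write $Z^{k+1}=Z^k-\theta\mW(H^k+\cQ(X^{k+1}-H^k))$ with $\cQ$ applied row-wise, i.e.\ one fresh quantization per node. The algorithm does something different: line~\ref{line:6} sits inside the loop over $j\in\cN_i$, so node $i$ draws an \emph{independent} quantization $\Delta_{ij}^k$ for every neighbor $j$. With per-node quantization your variance term becomes
\[
\ED{Q}{\norm{\mZ^{k+1}-\EE_Q\mZ^{k+1}}_{\mW^\dagger}^2}
=\theta^2\,\E\norm{\bar Q}_\mW^2
=\theta^2\sum_i w_{ii}\,\sigma_i^k
\le \theta^2\,\Delta\sum_i\sigma_i^k,
\]
where $\Delta=\max_i w_{ii}$ is the maximal weighted degree. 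Since $\Delta$ can be as large as $\lambda_{\max}(\mW)$ (and equals $n-1$ on a star), this yields $\rho$ in the rate, not $\rho_\infty$; your claimed bound $\max_{(i,j)\in E}w_{ij}\cdot\lambda_{\max}(\mW)\cdot\|M\|_F^2$ is neither what falls out of the row-wise formulation nor what produces the $\omega\rho_\infty$ term.

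The paper's route is to keep the per-edge randomness explicit, compute the covariance
\[
\ED{Q}{\langle\bar\Delta_{ab}^k,\bar\Delta_{cd}^k\rangle}=\delta_{ac}\delta_{bd}\,\sigma_a^k,
\]
and show that the resulting quadratic form collapses to $2\theta^2\sum_i\sigma_i^k\,[\mW^{\dagger/2}\hat\mW\mW^{\dagger/2}]_{ii}$, where $\hat\mW$ is the Laplacian with \emph{squared} edge weights $w_{ij}^2$. The key spectral inequality is $u^\top\hat\mW u\le(\max_{(i,j)\in E}w_{ij})\,u^\top\mW u$, which gives $\lambda_{\max}(\mW^{\dagger/2}\hat\mW\mW^{\dagger/2})\le\max_{(i,j)\in E}w_{ij}$ directly---no factor of $\lambda_{\max}(\mW)$. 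That is the mechanism behind $\rho_\infty$, and it genuinely requires the per-edge independence you have discarded.

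Two smaller points. The paper's Lyapunov uses $\|\mH^k-\mX^\star\|_\mI^2$ rather than your $\|X^k-H^k\|_F^2$; the former gives the clean recursion $\E\|\mH^{k+1}-\mX^\star\|^2\le(1-\alpha)\|\mH^k-\mX^\star\|^2+\alpha\E\|\mX^{k+1}-\mX^\star\|^2$ without having to control $\|X^{k+1}-X^k\|$. For Option~D the extra Lyapunov term is $\sum_iB_{f_i}(w_i^k,x^\star)$ (Bregman divergence), not a gradient-difference norm; and for Option~C the paper takes a fixed small $\eta$ depending on $\epsilon$ rather than a decreasing $\theta_k$ schedule. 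These are fixable, but the per-edge quantization issue is structural: without it you will not recover the advertised rate.
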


For \textbf{Option A} and \textbf{B} we  obtain the same linear convergence rate. 
For $\omega = 0$ the rate simplifies to $\tilde \cO \bigl( \kappa \rho  \bigr)$, which is the product of the condition number of $f$ (difficulty of the optimization problem) and the spectral gap of $\mW$ (how fast information diffuses in the graph). 

The dependence on these parameters can be improved to their square roots with accelerated gradient methods cf.~\cite{Scaman2017:optimal,Uribe:2018uk}. In particular, our scheme fits the Catalyst framework~\cite{lin15:catalyst} that can potentially be used to derive optimal accelerated rates with restarts. However, indirect acceleration via Catalyst might not give the best practical scheme, and direct acceleration would be preferred (though not derived in this work).

Our result recovers the best known rates for non-accelerated algorithms~\cite{Alghunaim2019:pd} and in the centralized setting ($\rho = 1$) we recover the rate of the standard gradient method. 
In contrast to the method proposed in~\cite{Reisizadeh:2018DecentralizedQuantized,Reisizadeh19:exact} we are here able to show linear convergence for our scheme even with quantization, i.e., for $\omega > 0$.
\citet{Liu2020:pd} independently show convergence rate $\tilde \cO\bigl(\kappa \rho \omega \bigr)$ for \textbf{option B}, whereas in our result, $\cO\bigl(\kappa \rho + \kappa \rho_\infty \omega \bigr)$, the dependency on $\omega$ can be weaker.

The linear $\cO(\omega)$ term that appears in all our results is not crucial, as sending $\omega$-quantized vectors is typically $\cO(\omega)$ times faster than sending uncompressed vectors (consider random-$k$ quantization as a guiding example), thus $\cO(\omega)$ is proportional to the time it takes to send one single unquantized vector between two nodes.

\paragraph{Compression for free.} Note that for any choice of $\omega$ for which $\omega + \kappa (\rho + \omega \rho_\infty) = \cO\bigl(\kappa \rho \bigr),$ or in other words, 
\begin{align}
\omega \leq \min \bigl\{\rho \rho_\infty^{-1}, \kappa \rho \bigr\}\,,
 \label{eq:min}
\end{align}
the total number of iteration does not increase but the number of bits send in each iteration can be decreased.

 As explained in Remark~\ref{rem:bounds}, the ratio $\rho \rho_\infty^{-1}$ can reach size $\Theta(n)$, in particular for $k$-regular graphs with uniform weights, $\rho \rho_\infty^{-1} = \Theta(k)$. Hence 
$\omega$ can be chosen as large as $\Theta(n)$ for graphs with large maximal degree 
$\Delta$. As a second example, consider a star graph with a central node connected to all other nodes and uniform edge weights, $\lambda_{\rm max}(\mW)$ and the the spectral gap are both of order $\Omega(n)$, so that the choice $\omega = \cO(n)$ is admissible. For well connected graphs (such as regular graphs), the second term in~\eqref{eq:min} becomes smaller, but for difficult optimization problems with $\kappa=\Omega(\Delta)$ we see that compression up to $\omega=\cO(\Delta)$ is possible without affecting the convergence rate.

In \textbf{option D} we leverage the finite-sum structure of $f_i$. In each iteration only a single new gradient $\nabla f_{ij}$ has to be computed (unless a full pass over the local dataset is triggered). Our method combines SVRG-style variance reduction (reducing the variance of the stochastic gradients) with our new variance reduction technique for quantized communication to achieve linear convergence on decentralized networks. For $\omega =0$ and $\rho=1$ we recover the convergence rate of SVRG and for $\rho>1$ our rate improves over the $\tilde \cO \bigl(m + \kappa^2 \rho^2\bigr)$ convergence rate of the recently proposed GT-SVRG~\cite{Xin2019:GTSVRG} which does not support quantization.

For \textbf{option C}, with stochastic updates, 
we observe that our convergence rate recovers the linear rate of option \textbf{A} and \textbf{B}, when $\sigma^2 \to 0$. However, when $\sigma^2$ is large, the rate is dominated by the $\cO\bigl(\frac{\sigma^2}{\epsilon} \bigr)$ term, and the algorithm only converges sublinearly.

\begin{figure*}[t!]
		\hfill
	\centering
	\includegraphics[height=3.9cm]{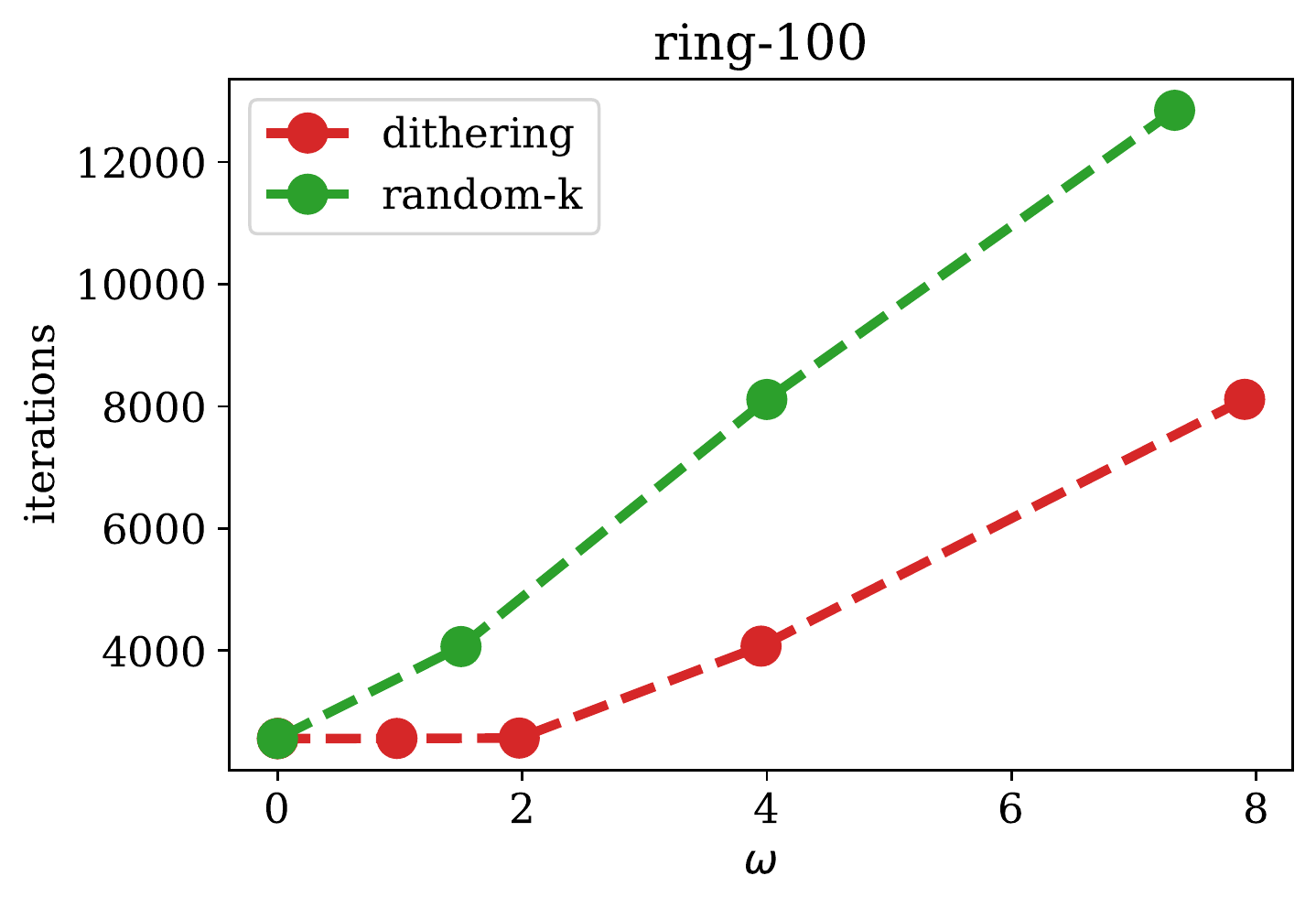}
	\hfill
	\includegraphics[height=3.9cm]{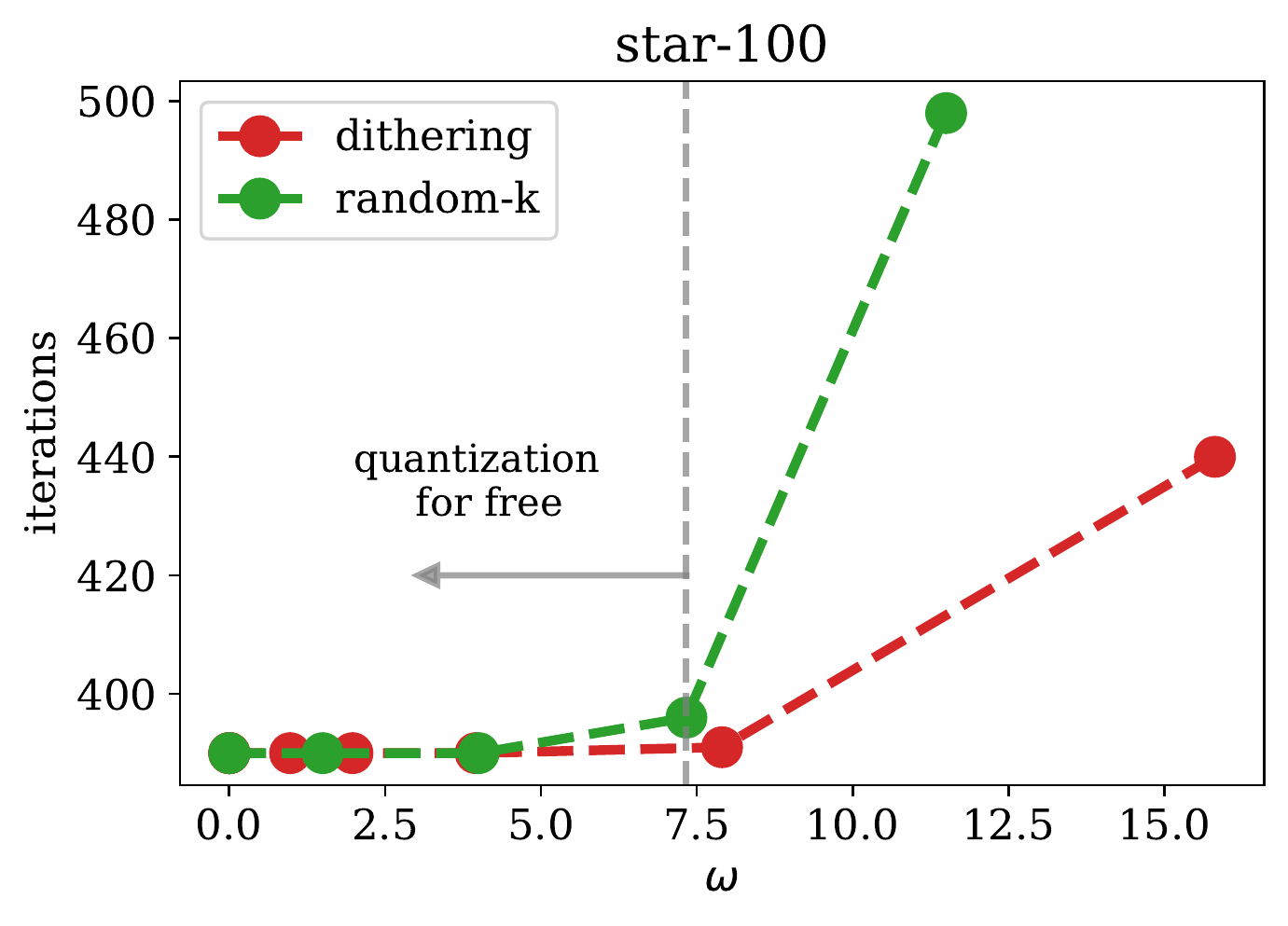}
	\hfill\null
	\vspace{-3mm}
	\caption{Illustrating quantization for free (right vs.\ left).
	 Iterations to converge to $10^{-3}$ error for Algorithm~\ref{alg:1} (option B) with different quantization functions. Average consensus problem on the \emph{star} and \emph{ring} topologies with $n=100$ nodes, $d=250$ and (rand-$k$) and (dit-$k$) compression.	 
	 }\label{fig:free}
\end{figure*}

\begin{figure*}[t!]
	\hfill
	\centering
	\includegraphics[height=3.9cm]{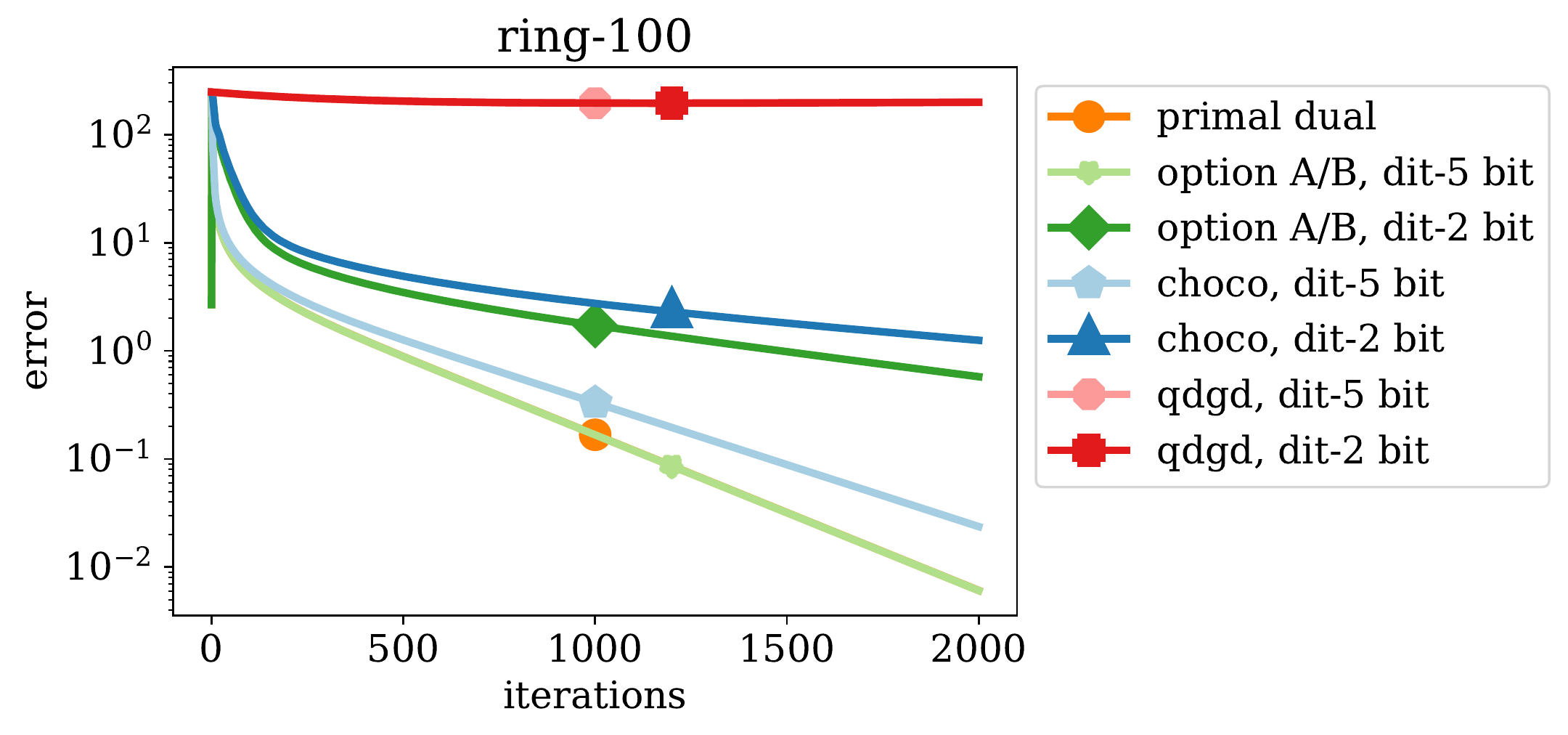}
	\hfill
	\includegraphics[height=3.9cm]{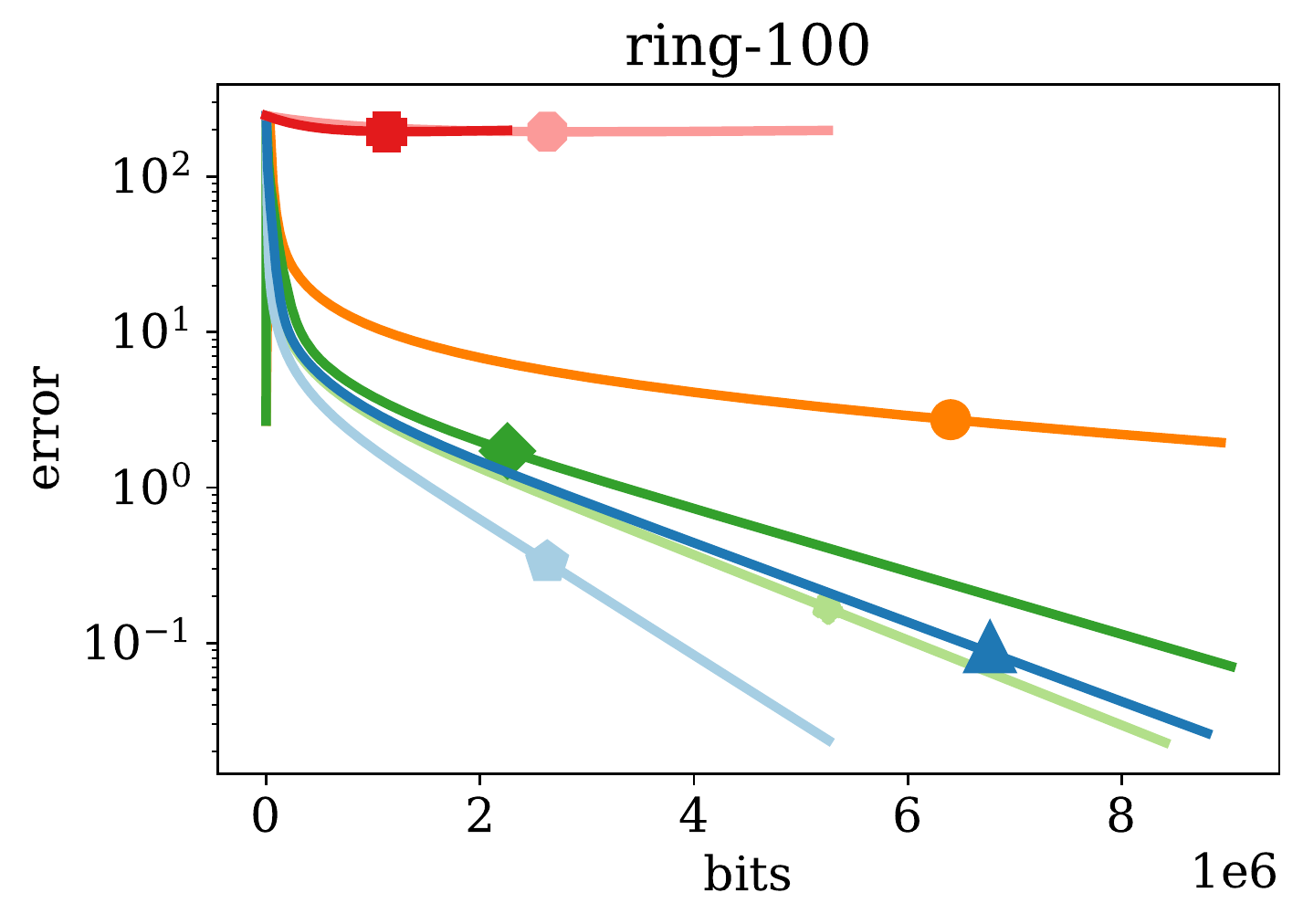}
	\hfill \null
    \\
    \hfill
	\includegraphics[height=3.9cm]{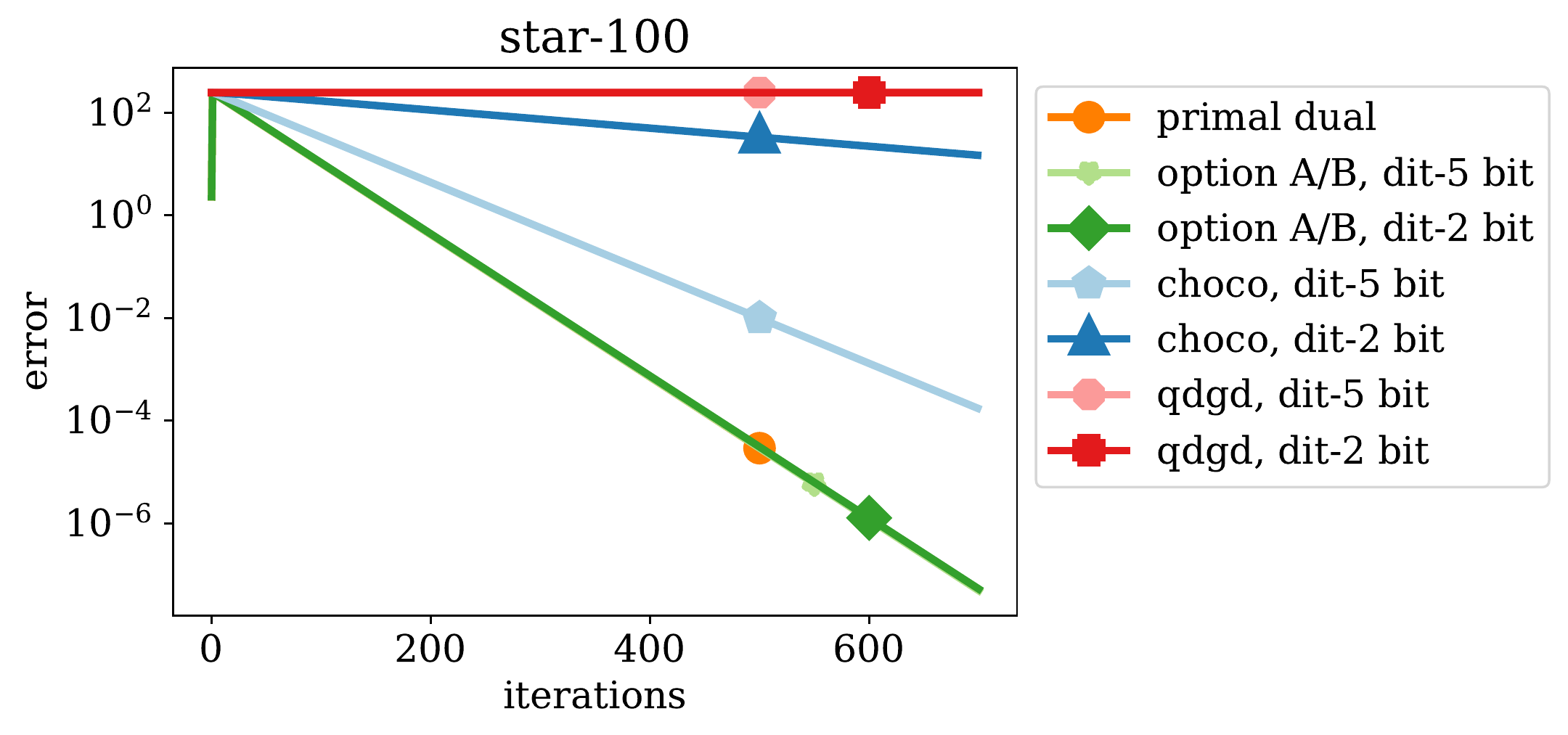}
	\hfill
	\includegraphics[height=3.9cm]{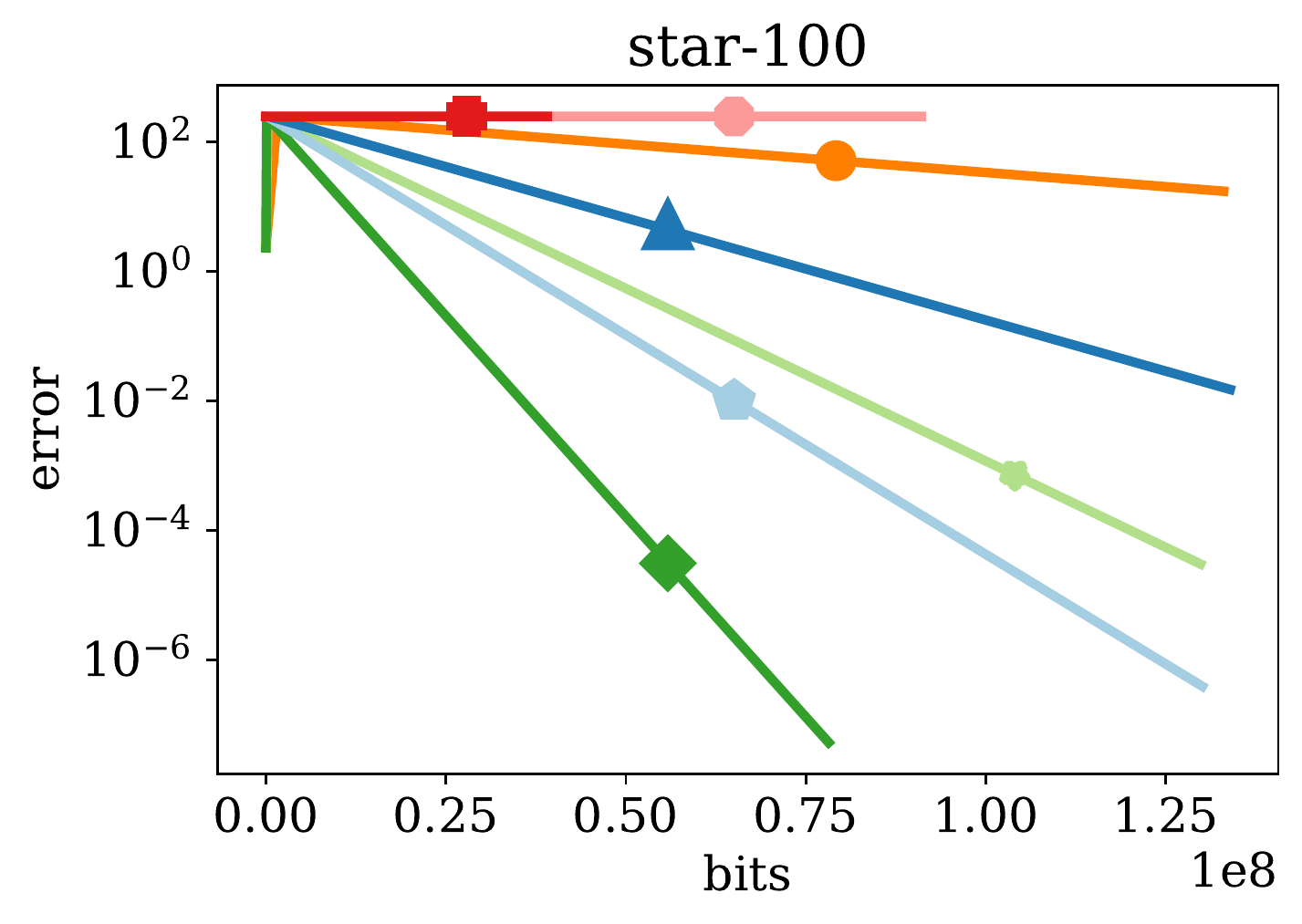}
	\hfill\null
	\vspace{-3mm}
	\caption{Comparison to the baselines. Average consensus problem on the star and ring topologies with $n=100$ nodes, $d=250$ and (rand-$k$) and (dit-$k$) compression.}\label{fig:average_baselines}
\end{figure*}

\section{Experiments}
In this section we experimentally validate our theoretical findings.

\paragraph{Setup.} We use rand-$k$ and dit-$k$ quantization functions (see Example~\ref{ex:compr}). 
We choose two unweighted ($w_{ij} = 1$ for $(i,j) \in E$) graphs on $n$ nodes for our experiments: The \emph{ring}, where every node is connected to two neighbours. As it holds $\rho \approx \rho_\infty \approx n^2$ we see that this is a challenging topology, only allowing communication compression for $\omega=\cO(1)$ (see also Remark~\ref{rem:bounds}). Further, the  \emph{star} graph, where $(n-1)$ nodes have no direct links between them, but are all connected to the central node. Here it holds $\rho=n$, $\rho_\infty=1$ and compression for $\omega = \cO(n)$ is suggested by our theory.

As baselines we use decentralized gradient descent algorithms with quantized communications designed for convex cases: QDGD \cite{Reisizadeh19:exact}, Choco-Gossip and Choco-SGD \cite{KoloskovaSJ19gossip} for consensus and logistic regression correspondingly. Note that when the compression function is identity ($\omega = 0$), Choco-SGD recovers D-SGD \cite{Nedic2009:distributedsubgrad}, and our Algorithm~\ref{alg:1} recovers Primal Dual GD \cite{Scaman2017:optimal,Alghunaim2019:pd}. In all our experiments we tune the hyperparameters of these algorithms independently over a logarithmic grid. 

\paragraph{Average consensus.}

\begin{figure*}[t]
	\hfill
	\centering
	\includegraphics[height=3.9cm]{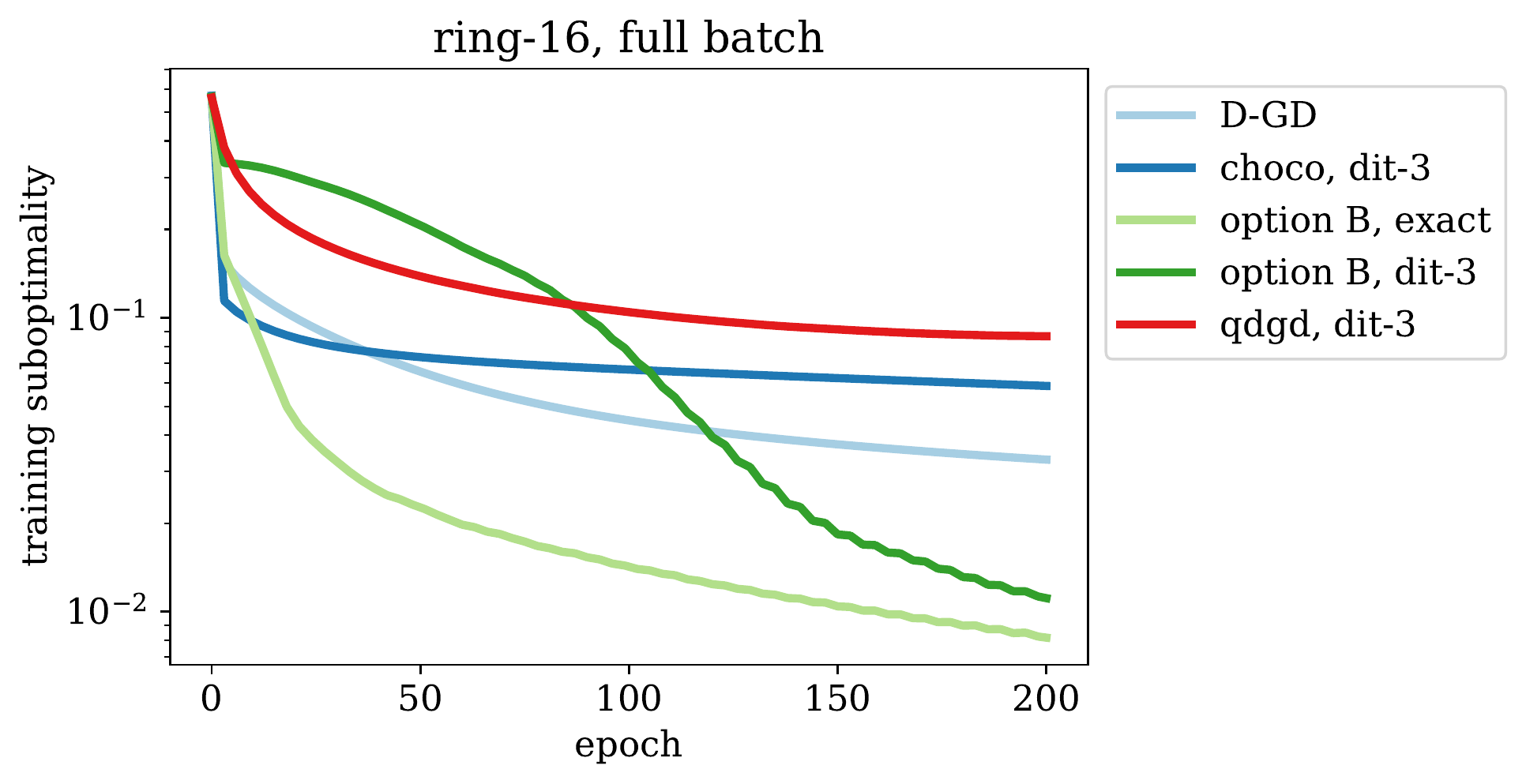}
	\hfill
	\includegraphics[height=3.9cm]{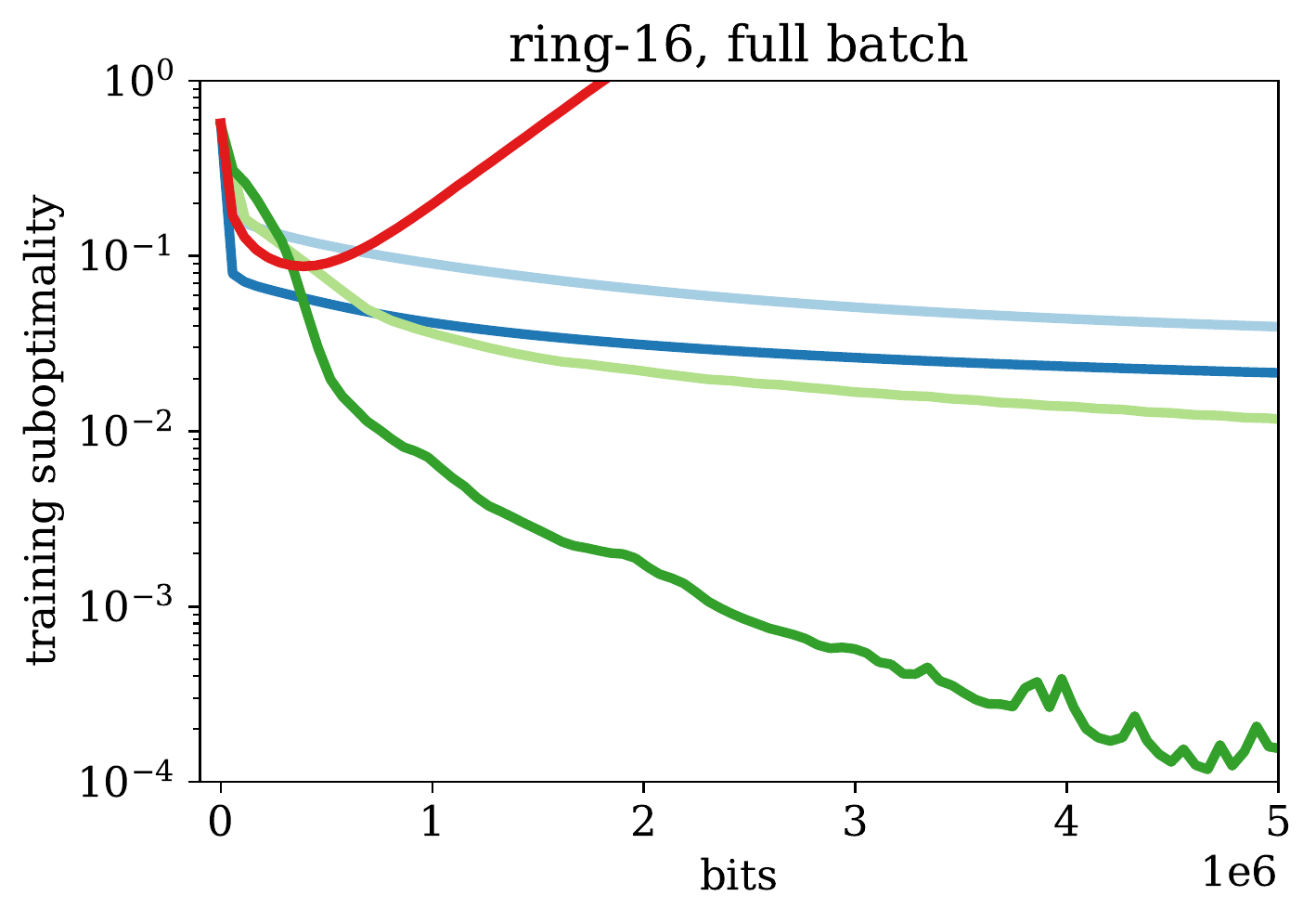}
	\hfill\null
	\\
	\hfill
	\centering
	\includegraphics[height=3.9cm]{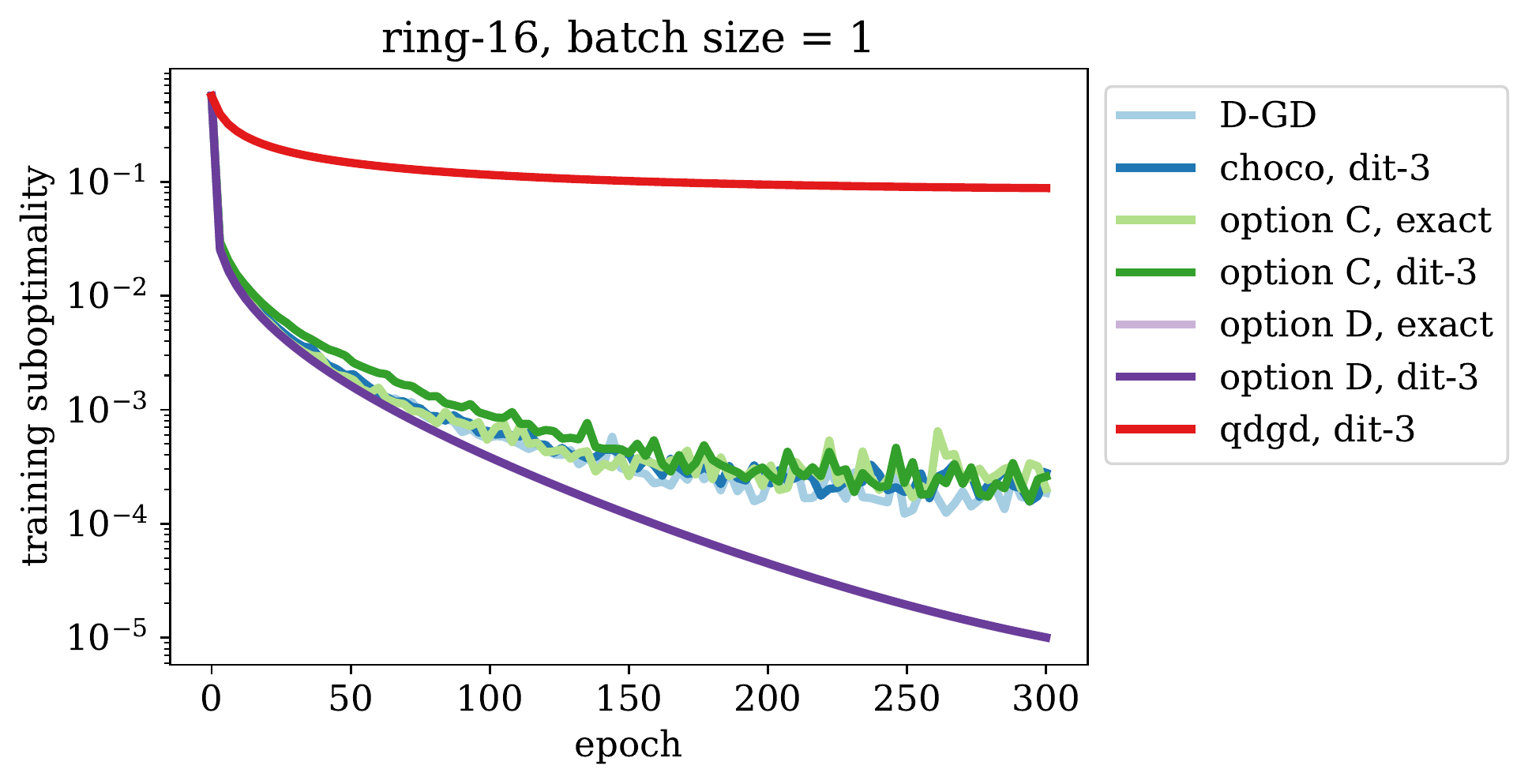}
	\hfill
	\includegraphics[height=3.9cm]{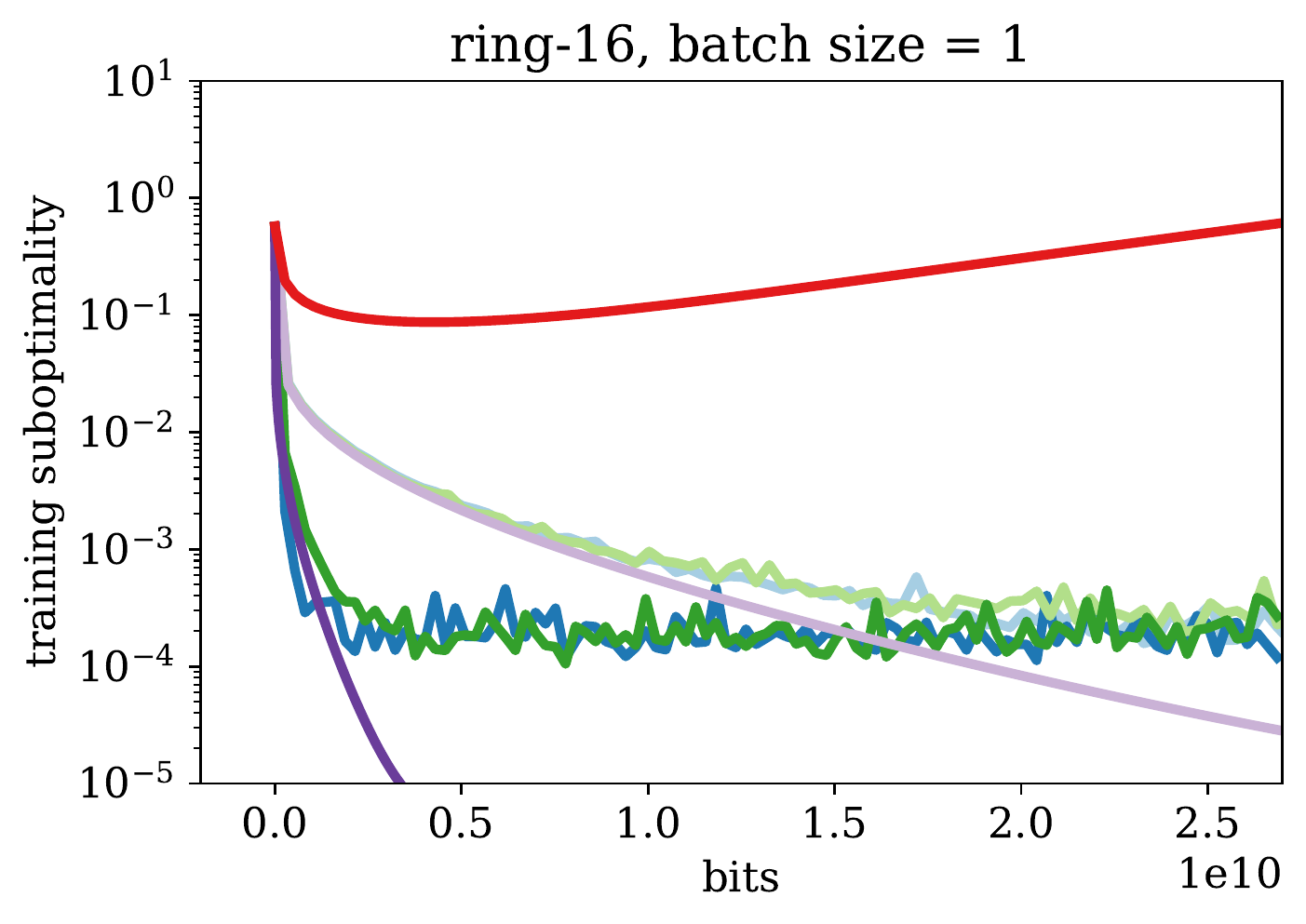}
	\hfill\null
	\vspace{-3mm}
	\caption{Logistic regression on w8a dataset. Comparison to the baselines for full batch GD (top) and stochastic GD (bottom). }\label{fig:logistic}
\end{figure*}

First, we illustrate the performance of Algorithm~\ref{alg:1} on the average consensus problem where every worker $i$ has a vector $x_i \in \R^d$ and the goal is to find the average $\bar x = \frac{1}{n}\sum_{i=1}^n x_i.$ We generate vectors $x_i$ from normal distribution $\cN(0, \mI)$. This can be cast into decentralized optimization formulation \eqref{eq:1} by considering functions of the form $f_i(x) = \smash{\frac{1}{2} \norm{x - x_i}_2^2}$. Note that these $f_i$'s are strongly convex and smooth with $L = \mu = 1$ (Assumption~\ref{assumption:convex-and-smooth}), we set $\eta = \frac{1}{L} =1$ and tune the stepsize $\theta$ for our algorithm.
In this setup we can easily compute full gradients. Moreover, both {\bf option A} and {{\bf option B}} of Algorithm~\ref{alg:1} lead to the same update. 

In Figure~\ref{fig:free} we see that that for the challenging ring topology almost any quantization level $\omega$ leads to an increase in the total number of iterations. On the other hand, as predicted by theory, for the star graph there is a level up to which quantization does not affect the convergence, and we can achieve communication savings for free. 

In Figure~\ref{fig:average_baselines} we compare our algorithms to the baselines. Even after tuning the stepsizes, QDGD converges very slowly (in agreement with Table~\ref{tab:comparison}). On both graphs, iteration-wise our algorithm converges faster than Choco. However, in terms of number of bits, Algorithm~\ref{alg:1} converges slightly slower than Choco on the ring graph. This is because our Algorithm~\ref{alg:1} requires two rounds of communication each iteration and Choco only one. However, even with this slight disadvantage, our algorithm performs best on the star graph in term of bits.

\paragraph{Logistic regression.} We further assess performance on logistic regression with the objective function $$f(x) = \frac{1}{m} \sum_{i = 1}^m \log(1 + \exp(-b_j a_j^\top x)) + \frac{1}{2m} \norm{x}_2^2,$$ where $a_j \in \R^d$, $b_j \in \{-1, 1\}$. We use the w1a dataset \cite{Platt:1998w1a} and distribute the samples between machines equally in a non-iid way, sorted by label. We use ring topology with 
$n=16$ nodes. We compare two cases: either the nodes compute gradients on their full local batch (Figure \ref{fig:logistic}, top), or stochastic gradients with respect to one single (randomly selected) local data sample (bottom).
We tune all algorithms to reach best performance after 200 epochs in the full batch case and for 300 epochs in stochastic case (left). To plot performance in terms of transmitted number of bits (right), we run the algorithms longer with found parameters. \looseness=-1

With local gradients available, our algorithm converges faster than the baselines. This is supported by the theory, as we prove linear convergence for  our {\bf option B}, while all other baselines converge only sublinearly (Table~\ref{tab:comparison}). With stochastic gradients, {\bf option C} as good as the Choco baseline, while {\bf option D} outperforms all schemes (we have proven linear rate).

\section*{Acknowledgements}
We acknowledge funding from SNSF grant 200021\_175796, as well as a Google Focused Research Award.

\small
\bibliographystyle{myplainnat}

\bibliography{../papers_diana}
\normalsize

\appendix
\onecolumn

\section{Parameters for the Numerical Experiments}

In this section we give the hyperparameters we used for the for the experiments in the paper (found by grid search).

\begin{table}[h!]
	\centering
	\begin{tabular}{l|c||c}
		&  ring-100 & {star-100}\\\hline
		method & $\theta$ & $\theta$ \\ \hline\hline
		exact (no quantization) & 1.26 & 1.58\\ \hline
		dit-$17$ & 1.26   &  1.58 \\
		dit-$9$  &  1.26 &      1.58 \\
		dit-$5$  &  1.26 &      1.58 \\
		dit-$4$  &  1.26  &      1.58 \\
		dit-$3$  &  0.79 &      1.58 \\
		dit-$2$  &   0.4 &      1.58 \\ 
		dit-$1$  &  -  &      1.58 \\\hline
		rand-$100$  &   0.8 &      1.58 \\ 
		rand-$50$  &   0.4 &      1.58 \\ 
		rand-$30$  &   0.25 &      1.58 \\ 
		rand-$20$  &   0.2 &      1.58 \\ 
		rand-$10$  &   0.1 &      1.0  \\ 
		
	\end{tabular}
	\caption{Hyperparameters found by tuning (lowest iteration number to reach target accuracy) in experiments for Fig. \ref{fig:free}. }
	\label{tab:learning_rates_fig1}
\end{table}

\begin{table}[h!]
	\centering
	\begin{tabular}{l|c||c}
		&  ring-100 & {star-100}\\\hline
		method & $\theta$ & $\theta$  \\ \hline\hline
		primal dual & 1.26 & 1.58\\
		option A/B, dit-$5$ & 1.26   &  1.58 \\
		option A/B, dit-$2$ & 0.4   &  1.58 \\\hline\hline
		method &  $\gamma$  &  $\gamma$ \\ \hline\hline
		choco, dit-$5$ & 1   &  1 \\
		choco, dit-$2$ & 0.25  &  0.2 \\ \hline\hline
		method &  $\epsilon$, $\alpha$  &  $\epsilon$, $\alpha$ \\ \hline\hline
		qdgd, dit-$5$ & (0.001, 1.0)  &  (0.0001, 10000.0) \\
		qdgd, dit-$2$ & (0.001, 1.0)  &  (0.0001, 10000.0) \\
	\end{tabular}
	\caption{Hyperparameters found by tuning in experiments for Fig. \ref{fig:average_baselines}. }
	\label{tab:learning_rates_fig2}
\end{table}

\begin{table}[h!]
	\centering
	\begin{tabular}{l|c|c}
		 \multicolumn{3}{c}{batch size = 1}    \\\hline
		method & $\gamma$  & $\eta$  \\ \hline\hline
		D-GD & -  & 0.1 \\
		choco, dit-$3$ & 0.316   & 0.1 \\ \hline\hline
		method & $\theta$  & $\eta$  \\ \hline\hline
		option C, exact &  0.1  &  0.1 \\
		option C, dit-$3$ & 3.16 $\times 10^{-3}$   &  0.1 \\
		option D, exact & 3.16  &  0.1 \\
		option D, dit-$3$ & 3.16 $\times 10^{-3}$  &  0.1  \\ \hline\hline
		method & $\epsilon$  & $\alpha$  \\ \hline\hline
		qdgd, dit-$3$ & 1e-06  &  100 \\ 
	\end{tabular}
\hspace*{1cm}
	\begin{tabular}{l|c|c}
	\multicolumn{3}{c}{full batch}\\ \hline
	method & $\gamma$  & $\eta$  \\ \hline\hline
	D-GD & -  & 21.5 \\
	choco, dit-$3$ & 0.316   & 4.64 \\\hline\hline
	method & $\theta$  & $\eta$  \\ \hline\hline
	option B, exact &  0.1  &  21.5 \\
	option B, dit-$3$ & 3.5 $\times 10^{-4}$   &  40 \\\hline\hline
	method & $\epsilon$  & $\alpha$  \\ \hline\hline
	qdgd, dit-$3$ & 0.01  &  31.6 \\ 
\end{tabular}

	\caption{Hyperparameters found by tuning (lowest error after 200 epochs for full batch, and 300 epochs for batch size 1) in experiments for Fig. \ref{fig:logistic}. }
	\label{tab:learning_rates_fig3}
\end{table}

\section{Convergence Proof}

	To prove the convergence of Algorithm~\ref{alg:1}, we will use matrix notation for the iterates of the algorithm:\\
	\begin{itemize}
	\item 	primal iterates $$\mX^k = \left[x_1^k,\ldots,x_n^k\right] \in \R^{d\times n},$$ 
	\item 
	and dual iterates $$\mZ^k = \left[z_1^k,\ldots,z_n^k\right] \in \R^{d\times n}$$
and
	$$\mH^k = \left[h_1^k,\ldots,h_n^k\right] \in \R^{d\times n}.$$
	\end{itemize}
	
	Let $\mX^\star =  [\underbrace{x^\star,\ldots, x^\star}_{n \text{ times}}]$, $\mZ^\star = \left[z_1^\star,\ldots,z_n^\star\right]$, where $x^\star$ is a solution of \eqref{eq:1}, $z_i^\star = \nabla f_i(x^\star)$ for all $i = 1,\ldots,n$.
	
	For arbitrary matrix $\mB \in \sym^n_+$, we define matrix semi-norm (norm in case $\mB$ is positive-definite)
	$\norm{\cdot}_{\mB} : \R^{d\times n} \rightarrow \R_+,$
	which is defined as follows:
	\begin{equation}
	\norm{\mX}_\mB^2 = \dotprod{\mX\mB}{\mX} = \trace{\mX\mB\mX^\top}.
	\end{equation}

	\begin{lemma}[Properties of $\mW$]
	$\ker \mW = \myspan{\1}$, where $\1 =(1,\dots,1)^\top$. 
	\end{lemma}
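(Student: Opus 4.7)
The plan is to prove the two inclusions $\myspan{\1} \subseteq \ker \mW$ and $\ker \mW \subseteq \myspan{\1}$ separately, the second one relying on the graph being connected (which is implicit in the decentralized setup).

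First, I would check the easy direction $\myspan{\1} \subseteq \ker \mW$ by a direct row sum. For any row $i$, the entry $(\mW\1)_i$ equals $\mW_{ii} + \sum_{j\neq i}\mW_{ij} = \sum_{l \in \cN_i} w_{il} - \sum_{j \in \cN_i} w_{ij} = 0$ by the definition of the weighted Laplacian. Hence $\mW \1 = \0$, and by linearity every scalar multiple of $\1$ lies in $\ker \mW$.

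For the reverse inclusion, I would express the quadratic form $x^\top \mW x$ in the familiar Laplacian way. Grouping terms by edges and using $w_{ij}=w_{ji}$ together with the definition of $\mW$ gives
\begin{equation*}
x^\top \mW x \;=\; \sum_{i} \Big(\sum_{l \in \cN_i} w_{il}\Big) x_i^2 \;-\; \sum_{\substack{i \neq j \\ (i,j) \in E}} w_{ij} x_i x_j \;=\; \sum_{\{i,j\} \in E} w_{ij} (x_i - x_j)^2,
\end{equation*}
where the last sum is over unordered edges and each summand is nonnegative because $w_{ij} > 0$ on $E$. Since $\mW \in \sym^{n}_+$, the identity $\mW x = \0 \iff x^\top \mW x = 0$ (standard for PSD matrices) lets me conclude from $x^\top \mW x = 0$ that $x_i = x_j$ for every edge $\{i,j\} \in E$.

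At this point the main (and essentially only) subtlety is invoking connectivity of the graph $G$: for any two nodes $i,j$, there is a path $i = i_0, i_1, \ldots, i_\ell = j$ in $E$, along which the coordinates must be constant, giving $x_i = x_j$ for all $i,j$. Hence $x = c\1$ for some $c \in \R$, i.e.\ $x \in \myspan{\1}$. Combining both inclusions yields the claim. The only obstacle worth flagging is the connectivity assumption, which is natural in the decentralized setting but ought to be stated explicitly in the lemma hypotheses.
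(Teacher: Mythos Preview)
Your argument is correct and is the standard proof of this fact for weighted Laplacians on connected graphs. Note, however, that the paper does not actually supply a proof: the lemma is stated without proof in the appendix, and in the main text (Section~3.2) the property $\ker\mW=\myspan{\1}$ is listed as part of the setup for the mixing matrix, essentially as an assumption or well-known fact. So there is no ``paper proof'' to compare against; you have simply filled in the omitted justification, and your caveat about connectivity is exactly the hypothesis the paper is implicitly using.
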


	\begin{lemma}
		For arbitrary $\mX \in \R^{d\times n}$, the following inequalities hold:
		\begin{equation}\label{eq:5}
			\norm{\mX}_\mW^2 \leq \lambda_{\max}(\mW) \norm{\mX}_\mI^2,
		\end{equation}
		\begin{equation}\label{eq:6}
			 \lambda_{\min}^+(\mW)\norm{\mX}_{\mW^\dagger}^2 \leq \norm{\mX}_\mI^2.
		\end{equation}
	\end{lemma}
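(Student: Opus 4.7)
The plan is to reduce both inequalities to elementary facts about the spectrum of $\mW$ via its spectral decomposition. Since $\mW\in\sym_+^n$, write $\mW=\sum_{i=1}^n \lambda_i v_i v_i^\top$ with orthonormal eigenvectors $\{v_i\}\subset\R^n$ and eigenvalues $0\le \lambda_i$. Then for any $\mX\in\R^{d\times n}$,
$$\|\mX\|_\mW^2 = \trace{\mX\mW\mX^\top} = \sum_{i=1}^n \lambda_i \,\|\mX v_i\|_2^2,$$
using $\mX\mW\mX^\top=\sum_i\lambda_i(\mX v_i)(\mX v_i)^\top$, while $\|\mX\|_\mI^2=\sum_{i=1}^n\|\mX v_i\|_2^2$ follows from Parseval / the fact that $\{v_i\}$ is an orthonormal basis of $\R^n$.

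For the first inequality \eqref{eq:5}, I would simply bound $\lambda_i\le \lambda_{\max}(\mW)$ in each summand, yielding
$$\|\mX\|_\mW^2 = \sum_{i=1}^n \lambda_i\,\|\mX v_i\|_2^2 \;\leq\; \lambda_{\max}(\mW)\sum_{i=1}^n \|\mX v_i\|_2^2 = \lambda_{\max}(\mW)\,\|\mX\|_\mI^2.$$

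For the second inequality \eqref{eq:6}, I would use that $\mW^\dagger$ has spectral expansion $\sum_{i:\lambda_i>0}\lambda_i^{-1} v_i v_i^\top$, so
$$\|\mX\|_{\mW^\dagger}^2 = \sum_{i:\lambda_i>0} \frac{1}{\lambda_i}\,\|\mX v_i\|_2^2.$$
For every index with $\lambda_i>0$ we have $\lambda_i\ge \lambda_{\min}^+(\mW)$, hence $\lambda_{\min}^+(\mW)/\lambda_i\le 1$, and therefore
$$\lambda_{\min}^+(\mW)\,\|\mX\|_{\mW^\dagger}^2 = \sum_{i:\lambda_i>0}\frac{\lambda_{\min}^+(\mW)}{\lambda_i}\,\|\mX v_i\|_2^2 \;\leq\; \sum_{i:\lambda_i>0}\|\mX v_i\|_2^2 \;\leq\; \sum_{i=1}^n\|\mX v_i\|_2^2 = \|\mX\|_\mI^2.$$

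There is no real obstacle here; both statements are standard Rayleigh-quotient-type bounds lifted to matrices. The only minor point of care is \eqref{eq:6}, where one should note that restricting the sum to indices with $\lambda_i>0$ (i.e.\ dropping the kernel directions) only \emph{decreases} the right-hand side, so the inequality remains valid after replacing $\sum_{i:\lambda_i>0}$ by $\sum_{i=1}^n$ on the majorant side.
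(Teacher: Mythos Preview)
Your proof is correct. The paper itself states this lemma without proof, treating both inequalities as standard facts; your spectral-decomposition argument is the natural way to verify them and would be fully acceptable here.
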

	
	\begin{lemma}[$h_i^k$ update]
		Let $\alpha = \frac{1}{\omega + 1}$. The following inequality holds:
		\begin{equation}\label{eq:7}
		\E{\norm{\mH^{k+1} - \mX^\star}_\mI^2}
		\leq
		(1 - \alpha)\norm{\mH^{k} - \mX^\star}_\mI^2
		+
		\alpha\E{\norm{\mX^{k+1} - \mX^\star}_\mI^2}.
		\end{equation}
	\end{lemma}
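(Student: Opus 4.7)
The plan is to work at the level of individual columns and then recombine, since $\mH^{k+1} = \mH^k + \alpha \cQ(\mX^{k+1} - \mH^k)$ is applied node-wise and $\norm{\cdot}_\mI^2 = \sum_{i=1}^n \norm{\cdot_i}^2$ decomposes as a sum over nodes. So it suffices to prove the analogous scalar inequality $\E{\norm{h_i^{k+1} - x^\star}^2} \leq (1-\alpha) \norm{h_i^k - x^\star}^2 + \alpha \E{\norm{x_i^{k+1} - x^\star}^2}$ for each $i$ and sum.

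The main manipulation is the standard bias-variance decomposition. Set $A := h_i^k - x^\star$ and $B := x_i^{k+1} - h_i^k$, so that $h_i^{k+1} - x^\star = A + \alpha \cQ(B)$ and $A + B = x_i^{k+1} - x^\star$. Conditioning on $(\mX^{k+1},\mH^k)$ and using Assumption~\ref{a:quantization} (unbiasedness $\E{\cQ(B)} = B$ and $\E{\norm{\cQ(B)-B}^2} \leq \omega \norm{B}^2$, which combined give $\E{\norm{\cQ(B)}^2} \leq (1+\omega)\norm{B}^2$), I would expand
\begin{align*}
 \E{\norm{A + \alpha\cQ(B)}^2}
 &= \norm{A}^2 + 2\alpha \dotprod{A}{B} + \alpha^2 \E{\norm{\cQ(B)}^2} \\
 &\leq \norm{A}^2 + 2\alpha \dotprod{A}{B} + \alpha^2(1+\omega)\norm{B}^2.
\end{align*}

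The key step is then to choose $\alpha$ so that the $\norm{B}^2$ coefficient becomes $\alpha$. Since $\alpha = \frac{1}{\omega+1}$ gives exactly $\alpha^2(1+\omega) = \alpha$, the bound becomes
\begin{equation*}
 \E{\norm{A+\alpha\cQ(B)}^2} \leq \norm{A}^2 + 2\alpha\dotprod{A}{B} + \alpha\norm{B}^2 = (1-\alpha)\norm{A}^2 + \alpha\norm{A+B}^2,
\end{equation*}
using the algebraic identity $\norm{A}^2 + 2\alpha\dotprod{A}{B} + \alpha\norm{B}^2 = (1-\alpha)\norm{A}^2 + \alpha(\norm{A}^2+2\dotprod{A}{B}+\norm{B}^2)$.

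Substituting back $A+B = x_i^{k+1} - x^\star$, taking the full expectation by the tower property, summing over $i=1,\dots,n$, and interpreting the sum as $\norm{\cdot}_\mI^2$ yields the claim. There is no real obstacle here; the only thing to watch is that $\cQ$ is applied independently to each column (so the cross-column terms vanish correctly when passing from the scalar estimate to the matrix norm), which is the standard convention used throughout the algorithm.
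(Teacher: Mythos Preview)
Your proof is correct and follows essentially the same route as the paper: expand the square, use unbiasedness plus $\E{\norm{\cQ(B)}^2}\leq(1+\omega)\norm{B}^2$, exploit $\alpha^2(1+\omega)=\alpha$, and finish with the identity $\norm{A}^2+2\alpha\dotprod{A}{B}+\alpha\norm{B}^2=(1-\alpha)\norm{A}^2+\alpha\norm{A+B}^2$. The only cosmetic difference is that you work column-by-column and sum, whereas the paper writes the same computation directly in the matrix Frobenius norm $\norm{\cdot}_\mI$; since that norm decomposes over columns, the two are identical.
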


	\begin{proof}
		Since $h_i^{k+1}= h_i^k + \alpha \cQ(x_i^{k+1} - h_i^k)$, we can decompose
		\begin{align*}
		\E{\norm{\mH^{k+1} - \mX^\star}_\mI^2} &= 
		\E{\norm{\mH^{k} - \mX^\star}_\mI^2} + 
		2 \alpha \dotprod{\mX^{k+1} - \mH^k}{\mH^k - \mX^\star} + \alpha^2 \E { \norm{\mX^{k+1} - \mH^k}_\mI^2 } \\
		&\leq \E{\norm{\mH^{k} - \mX^\star}_\mI^2} + 
		2 \alpha \dotprod{\mX^{k+1} - \mH^k}{\mH^k - \mX^\star} + \alpha^2 (1+\omega) \norm{\mX^{k+1} - \mH^k}_\mI^2 \\
		&\leq \E{\norm{\mH^{k} - \mX^\star}_\mI^2} + 
		2 \alpha \dotprod{\mX^{k+1} - \mH^k}{\mH^k - \mX^\star} + \alpha \norm{\mX^{k+1} - \mH^k}_\mI^2\\
		&= \E{\norm{\mH^{k} - \mX^\star}_\mI^2}  + \alpha \dotprod{\mX^{k+1} - \mH^k}{\mX^{k+1} + \mH^k - 2\mX^\star} \\
		&= \E{\norm{\mH^{k} - \mX^\star}_\mI^2} + \alpha \norm{\mX^{k+1}-\mX^\star}_\mI^2 - \alpha \norm{\mH^{k} - \mX^\star}_\mI^2 \\
		&\leq (1 - \alpha)\norm{\mH^{k} - \mX^\star}_\mI^2 + \alpha\E{\norm{\mX^{k+1} - \mX^\star}_\mI^2} .\qedhere
		\end{align*}
	\end{proof}

	\begin{lemma}[Dual step]
		\begin{equation}\label{eq:2}
			\E{\norm{\mZ^{k+1} - \mZ^\star}_{\mW^{\dagger}}^2}
			\leq
			\norm{\mZ^k - \mZ^\star}_{\mW^{\dagger}}^2
			+
			\E{-
				2\theta\dotprod{\mX^{k+1} - \mX^\star}{\mZ^k - \mZ^\star}
				+
				\theta^2
				\norm{\mX^{k+1} - \mX^\star}_\mW^2
				+
				\Sigma^k
			},
		\end{equation}
		where $\Sigma^k$ is a variance:
		\begin{equation}
			\Sigma^k = \ED{Q}{\norm{\mZ^{k+1} - \ED{Q}{\mZ^{k+1}}}_{\mW^{\dagger}}^2}.
		\end{equation}
	\end{lemma}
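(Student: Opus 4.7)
My plan is to first recast the update on line~\ref{line:8} in matrix form and then apply a standard bias--variance decomposition of the squared $\mW^\dagger$-norm around the conditional mean of $\mZ^{k+1}$. Writing $\hat x_i^{k+1} := \cQ(x_i^{k+1}-h_i^k)+h_i^k$ and $\hat\mX^{k+1}:=[\hat x_1^{k+1},\dots,\hat x_n^{k+1}]$, the antisymmetric sum on line~\ref{line:8} telescopes to
\begin{equation*}
\mZ^{k+1} \;=\; \mZ^k \;-\; \theta\, \hat\mX^{k+1}\,\mW,
\end{equation*}
and by Assumption~\ref{a:quantization} we have $\ED{Q}{\hat\mX^{k+1}}=\mX^{k+1}$, hence $\ED{Q}{\mZ^{k+1}} = \mZ^k - \theta \mX^{k+1}\mW$.

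Next, I would record two algebraic facts that are the workhorse of the computation. First, $\mX^\star \mW = x^\star \1^\top \mW = 0$ since $\mW\1 = 0$, so $\mX^{k+1}\mW = (\mX^{k+1}-\mX^\star)\mW$. Second, because $\sum_i z_i^0 = 0$ and the update on line~\ref{line:8} preserves this sum by antisymmetry in $i,j$, we have $(\mZ^k-\mZ^\star)\1 = 0$ for every $k$ (note $\mZ^\star \1 = \sum_i \nabla f_i(x^\star)=0$), so each row of $\mZ^k-\mZ^\star$ lies in $\ker(\mW)^\perp = \operatorname{range}(\mW)$. Consequently $(\mZ^k-\mZ^\star)\mW^\dagger\mW = \mZ^k-\mZ^\star$ and $\mW\mW^\dagger(\mZ^k-\mZ^\star)^\top = (\mZ^k-\mZ^\star)^\top$.

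With these two facts, the argument is essentially a direct expansion. Conditioning on everything up to (and including) the computation of $\mX^{k+1}$, the variance decomposition gives
\begin{equation*}
\ED{Q}{\norm{\mZ^{k+1}-\mZ^\star}_{\mW^\dagger}^2}
= \norm{\ED{Q}{\mZ^{k+1}}-\mZ^\star}_{\mW^\dagger}^2 + \Sigma^k,
\end{equation*}
where $\Sigma^k$ is exactly the variance term from the statement. Substituting $\ED{Q}{\mZ^{k+1}} - \mZ^\star = (\mZ^k-\mZ^\star) - \theta(\mX^{k+1}-\mX^\star)\mW$ and expanding the trace,
\begin{equation*}
\trace{\bigl((\mZ^k-\mZ^\star) - \theta(\mX^{k+1}-\mX^\star)\mW\bigr)\mW^\dagger\bigl((\mZ^k-\mZ^\star) - \theta(\mX^{k+1}-\mX^\star)\mW\bigr)^\top},
\end{equation*}
the cross term simplifies via $(\mZ^k-\mZ^\star)\mW^\dagger\mW = \mZ^k-\mZ^\star$ to $-2\theta \dotprod{\mX^{k+1}-\mX^\star}{\mZ^k-\mZ^\star}$, and the quadratic term simplifies via $\mW\mW^\dagger\mW=\mW$ to $\theta^2\norm{\mX^{k+1}-\mX^\star}_\mW^2$. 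Taking total expectation then yields the claimed bound (in fact with equality; the inequality in the statement merely leaves room for later bookkeeping).

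The only subtle point I expect is pinning down the orthogonality $(\mZ^k-\mZ^\star)\1 = 0$: it requires checking both that $\sum_i z_i^\star = \sum_i \nabla f_i(x^\star) = n\nabla f(x^\star) = 0$ and that the update preserves the initial constraint $\sum_i z_i^0 = 0$, so that the pseudoinverse identities $\mW^\dagger \mW$ act as the identity on the rows of $\mZ^k-\mZ^\star$. Once this is in place, the rest of the proof is a mechanical trace manipulation using the symmetry of $\mW$ and $\mW^\dagger$ together with $\mW\mW^\dagger\mW=\mW$.
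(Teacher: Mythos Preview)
Your argument is correct and follows essentially the same route as the paper: show $(\mZ^k-\mZ^\star)\1=0$ (by preservation of $\sum_i z_i^k=0$ and $\sum_i\nabla f_i(x^\star)=0$), deduce $(\mZ^k-\mZ^\star)\mW^\dagger\mW=\mZ^k-\mZ^\star$, identify $\ED{Q}{\mZ^{k+1}}=\mZ^k-\theta(\mX^{k+1}-\mX^\star)\mW$, and then expand the $\mW^\dagger$-norm via bias--variance; the paper expands around $\mZ^k$ first and then splits off the variance, but the two organizations are equivalent and both yield equality.

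One caveat worth recording: the clean matrix identity $\mZ^{k+1}=\mZ^k-\theta\,\hat\mX^{k+1}\mW$ with a \emph{single} $\hat x_i^{k+1}$ per node is not literally what the algorithm does. The loop on line~\ref{line:6} draws a fresh quantization $\cQ(x_i^{k+1}-h_i^k)$ for each neighbour $j$, so $\Delta_{ij}^k$ and $\Delta_{ij'}^k$ are independent for $j\neq j'$ (this independence is exactly what the subsequent variance-bound lemma exploits). Your proof of the present lemma is unaffected, since you only use the matrix form to read off $\ED{Q}{\mZ^{k+1}}$ and the mean is the same either way; but do not carry that single-$\hat\mX^{k+1}$ representation into the computation of $\Sigma^k$.
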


	\begin{proof}
		Firstly, we prove that 
		\begin{equation}\label{eq:9}
		\sum_{i=1}^n z_i^k = 0
		\end{equation}
		for all $k=0,1,2,\ldots$ by induction. For $k = 0$, \eqref{eq:9} follows trivially from initialization step of Algorithm~\ref{alg:1}.
		Now, assuming that \eqref{eq:9} holds, we have
		\begin{align*}
		\sum_{i=1}^n z_i^{k+1}
		&=
		\sum_{i=1}^n z_i^k - \theta\sum_{j \in \cN_i}w_{ij}(\Delta_{ij}^k - \Delta_{ji}^k)
		=
		-\theta\sum_{(i,j) \in E}w_{ij}\Delta_{ij}^k
		+\theta\sum_{(i,j) \in E}w_{ij}\Delta_{ji}^k\\
		&=
		-\theta\sum_{(i,j) \in E}w_{ij}\Delta_{ij}^k
		+\theta\sum_{(i,j) \in E}w_{ji}\Delta_{ij}^k
		=
		\theta\sum_{(i,j) \in E}\Delta_{ij}^k(w_{ji} - w_{ij}) = 0,
		\end{align*}
		which proves \eqref{eq:9} for all $k=0,1,2,\ldots$.
		
		Next, we show that
		\begin{equation}\label{eq:11}
		(\mZ^k - \mZ^\star)\mW^\dagger\mW = \mZ^k - \mZ^\star,
		\end{equation}
		which holds, since $(\mZ^k - \mZ^\star)\ones = 0$, where $\ones = (1,\ldots,1)^\top \in \R^n$, and hence rows of matrix $\mZ^k - \mZ^\star$ lie in range of $\mW$.
		
		Now, we rewrite $\ED{Q}{\norm{\mZ^{k+1} - \mZ^\star}_{\mW^{\dagger}}^2}$:
		\begin{equation}\label{eq:10}
			\ED{Q}{\norm{\mZ^{k+1} - \mZ^\star}_{\mW^{\dagger}}^2}
			=
			\norm{\mZ^k - \mZ^\star}_{\mW^{\dagger}}^2
			+
			\ED{Q}{2\dotprod{\mZ^{k+1} - \mZ^k}{(\mZ^k - \mZ^\star)\mW^\dagger}}
			+
			\ED{Q}{\norm{\mZ^{k+1} - \mZ^k}_{\mW^\dagger}^2}.
		\end{equation}
		To simplify the second term in \eqref{eq:10}, we first rewrite $\ED{Q}{z_i^{k+1} - z_i^k}$ as follows:
		\begin{align*}
			\ED{Q}{z_i^{k+1} - z_i^k}
			&=
			-\theta \sum_{j \in \cN_i} w_{ij}\E{\Delta_{ij}^k - \Delta_{ji}^k}
			=
			-\theta \sum_{j \in \cN_i} w_{ij}  (x_i^{k+1} - x^\star)
			+\theta \sum_{j \in \cN_i} w_{ij}  (x_j^{k+1} - x^\star)\\
			&=
			-\theta\sum_{j=1}^n \mW_{ij}(x_j^{k+1} - x^\star),
		\end{align*}
		which gives
		\begin{equation}\label{eq:13}
			\ED{Q}{\mZ^{k+1} - \mZ^k} = -\theta(\mX^{k+1} - \mX^\star)\mW.
		\end{equation}
		Hence,
		\begin{equation}\label{eq:12}
			\E{2\dotprod{\mZ^{k+1} - \mZ^k}{(\mZ^k - \mZ^\star)\mW^\dagger}}
			\overset{\eqref{eq:13}}{=}
			-2\theta\dotprod{\mX^{k+1} - \mX^\star}{(\mZ^k - \mZ^\star)\mW^\dagger\mW }
			\overset{\eqref{eq:11}}{=}
			-2\theta\dotprod{\mX^{k+1} - \mX^\star}{\mZ^k - \mZ^\star}.
		\end{equation}
		Now, we simplify the last term in \eqref{eq:10}:
		\begin{align*}
			\ED{Q}{\norm{\mZ^{k+1} - \mZ^k}_{\mW^\dagger}^2}
			&=
			\norm{\ED{Q}{\mZ^{k+1} - \mZ^k}}_{\mW^\dagger}^2
			+
			\ED{Q}{\norm{\mZ^{k+1} - \ED{Q}{\mZ^{k+1}}}_{\mW^\dagger}^2}\\
			&\overset{\eqref{eq:13}}{=}
			\theta^2\norm{(\mX^{k+1} - \mX^\star)\mW}_{\mW^\dagger}^2+\Sigma^k
			=
			\theta^2\norm{\mX^{k+1} - \mX^\star}_{\mW}^2+\Sigma^k.
		\end{align*}
		Plugging this and \eqref{eq:12} into \eqref{eq:10} gives
		\begin{align*}
			\ED{Q}{\norm{\mZ^{k+1} - \mZ^\star}_{\mW^{\dagger}}^2}
			\leq
			\norm{\mZ^k - \mZ^\star}_{\mW^{\dagger}}^2
			-
			2\theta\dotprod{\mX^{k+1} - \mX^\star}{\mZ^k - \mZ^\star}
			+
			\theta^2
			\norm{\mX^{k+1} - \mX^\star}_\mW^2
			+
			\Sigma^k.
		\end{align*}
		Taking full expectation concludes the proof.
	\end{proof}

	\begin{lemma}[Variance bound] For each $k\geq 0$ we have
		\begin{equation}\label{eq:3}
			\Sigma^k \leq 4\theta^2\omega \max_{(i,j)\in E}w_{ij}
			\left[
				\norm{\mX^{k+1} - \mX_\star}_{\mI}^2
				+
				\norm{\mH^{k} - \mX_\star}_{\mI}^2
			\right].
		\end{equation}
	\end{lemma}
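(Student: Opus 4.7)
The plan is to isolate the quantization noise in $\mZ^{k+1}$, exploit the pseudoinverse identity to collapse the $\mW^\dagger$-weighted norm into a Laplacian quadratic form, and reduce that to a per-node expression controlled by the maximum edge weight.

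First I would set $\tilde q_i^k := \cQ(x_i^{k+1} - h_i^k) - (x_i^{k+1} - h_i^k)$; by Assumption~\ref{a:quantization} the $\tilde q_i^k$ are conditionally mean-zero, independent across $i$, and satisfy $\ED{Q}{\|\tilde q_i^k\|^2} \leq \omega\|x_i^{k+1}-h_i^k\|^2$. Substituting $\Delta_{ij}^k = x_i^{k+1} + \tilde q_i^k$ into line~\ref{line:8} and using the conditional mean $\ED{Q}{\mZ^{k+1}-\mZ^k} = -\theta(\mX^{k+1}-\mX^\star)\mW$ already derived in~\eqref{eq:13}, the centered iterate becomes
\begin{equation*}
\mZ^{k+1} - \ED{Q}{\mZ^{k+1}} \;=\; -\theta\, \tilde \mQ^k \mW, \qquad \tilde \mQ^k := [\tilde q_1^k,\ldots,\tilde q_n^k].
\end{equation*}

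Next, the symmetric pseudoinverse identity $\mW\mW^\dagger\mW = \mW$ collapses the variance to a Laplacian quadratic form:
\begin{equation*}
\Sigma^k \;=\; \theta^2 \ED{Q}{\trace{\tilde \mQ^k \mW\mW^\dagger\mW\, \tilde \mQ^{k,\top}}} \;=\; \theta^2\, \ED{Q}{\|\tilde \mQ^k\|_\mW^2}.
\end{equation*}
The Laplacian edge decomposition $\mW = \sum_{\{i,j\}\in E} w_{ij}(e_i-e_j)(e_i-e_j)^\top$ then yields $\|\tilde \mQ^k\|_\mW^2 = \sum_{\{i,j\}\in E} w_{ij}\|\tilde q_i^k-\tilde q_j^k\|^2$, and independence across nodes gives $\ED{Q}{\|\tilde q_i^k-\tilde q_j^k\|^2} = \ED{Q}{\|\tilde q_i^k\|^2} + \ED{Q}{\|\tilde q_j^k\|^2} \leq \omega(\|x_i^{k+1}-h_i^k\|^2 + \|x_j^{k+1}-h_j^k\|^2)$.

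Finally, I would pull $\max_{(i,j)\in E} w_{ij}$ out of the edge sum, apply Young's inequality in the form $\|x_i^{k+1}-h_i^k\|^2 \leq 2\|x_i^{k+1}-x^\star\|^2 + 2\|h_i^k-x^\star\|^2$, and reindex the edge sum as a sum over nodes; the overall factor of $4$ in the claim then arises from combining the factor $2$ from Young's inequality with an additional factor $2$ from the symmetry of the edge-to-node reindexing. The main delicate step is this last one: converting the edge-weighted sum $\sum_{\{i,j\}\in E} w_{ij}(\|a_i\|^2 + \|a_j\|^2)$ to a clean node sum bounded by $\max_{(i,j)\in E} w_{ij}\cdot \|\mathbf{A}\|_\mI^2$ without incurring a spurious vertex-degree factor is where one needs to exploit structural properties of the Laplacian such as the effective-resistance bound $w_e\,(e_i-e_j)^\top\mW^\dagger(e_i-e_j) \leq 1$, rather than blindly applying $w_{ij}\leq \max_{(i,j)\in E}w_{ij}$ to the edge sum.
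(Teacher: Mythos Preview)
There is a structural error at the very first step. You model the quantization noise with a single vector $\tilde q_i^k$ per node and write $\Delta_{ij}^k = x_i^{k+1} + \tilde q_i^k$. But line~\ref{line:6} of Algorithm~\ref{alg:1} sits inside the loop over $j\in\cN_i$, so a \emph{fresh} call to $\cQ$ is made for each directed edge; the paper explicitly uses the per-edge independence $\ED{Q}{\langle\bar\Delta_{ab}^k,\bar\Delta_{cd}^k\rangle}=\delta_{ac}\delta_{bd}\sigma_a^k$. With per-edge noise $\tilde q_{ij}^k$ the centred iterate is $z_i^{k+1}-\EE_Q z_i^{k+1}=-\theta\sum_{j\in\cN_i}w_{ij}(\tilde q_{ij}^k-\tilde q_{ji}^k)$, which is \emph{not} the $i$-th column of any product $\tilde\mQ^k\mW$, so your collapse $\Sigma^k=\theta^2\ED{Q}{\|\tilde\mQ^k\|_\mW^2}$ is not valid for the algorithm as stated.

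Even in the per-node variant where that collapse \emph{is} an identity, your plan cannot reach the claimed constant. After the edge decomposition and independence you get exactly $\ED{Q}{\|\tilde\mQ^k\|_\mW^2}=\sum_i d_i\,\ED{Q}{\|\tilde q_i^k\|^2}$, with $d_i=\sum_{j\in\cN_i}w_{ij}$ the weighted degree; at this point $\mW^\dagger$ has been eliminated, so no effective-resistance bound can be invoked, and the best factor obtainable is $\max_i d_i$, not $\max_{(i,j)\in E}w_{ij}$. On the complete graph with unit weights these differ by $n-1$, and the lemma with the $\max w_{ij}$ constant is in fact false for that per-node variant. The paper's argument succeeds precisely because the per-edge independence produces, after the expectation, a Laplacian $\hat\mW$ with \emph{squared} edge weights $w_{ij}^2$; the key step is then the spectral comparison $\hat\mW\preceq(\max_{(i,j)\in E}w_{ij})\,\mW$, which is what converts the $\mW^\dagger$-weighted variance into the $\max w_{ij}$ constant without a degree factor.
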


	\begin{proof}
		\begin{align}
			\Sigma^k
			&=
			\ED{Q}{\norm{\mZ^{k+1} - \ED{Q}{\mZ^{k+1}}}_{\mW^{\dagger}}^2}
			=
			\EE_Q
				\sum_{i=1}^n \sum_{j=1}^n
				\mW^\dagger_{ij}
				\dotprod{z_i^{k+1} - \EE_Q z_i^{k+1}}{z_j^{k+1} - \EE_Q z_j^{k+1}}\\
			&=
			\sum_{i=1}^n \sum_{j=1}^n
			\mW^\dagger_{ij}
			\ED{Q}{
			\dotprod{z_i^{k+1} - \EE_Q z_i^{k+1}}{z_j^{k+1} - \EE_Q z_j^{k+1}}},\label{eq:15}
		\end{align}
		where $\mW^\dagger_{ij}$ is number in the $i$-th row of $j$-th column of $\mW^\dagger$.
		
		Next we observe, that
		\begin{equation}\label{eq:14}
			z_i^{k+1} - \EE_Q z_i^{k+1} = -\theta\sum_{j\in \cN_i} w_{ij}
			(\bar{\Delta}_{ij}^k - \bar{\Delta}_{ji}^k),
		\end{equation}
		where
		$
			\bar{\Delta}_{ij}^k := \Delta_{ij}^k - \EE_Q \Delta_{ij} = \Delta_{ij}^k - x_i^{k+1}.
		$
		
		From the construction of $\Delta_{ij}^k$ it follows that
		\begin{equation}
			\ED{Q}{\dotprod{\bar{\Delta}_{ab}^k}{\bar{\Delta}_{cd}^k}} = 
			\begin{cases}
				\sigma_a^k,& a=c \text{ and } b =d\\
				0,& \text{otherwise}
			\end{cases}
			=
			\delta_{ac}\delta_{bd} \sigma_a^k,
		\end{equation}
		where
		$
			\sigma_i^k = \ED{Q}{\norm{Q(x_i^{k+1} - h_i^k) - (x_i^{k+1} - h_i^k)}_2^2}
		$
		and $\delta_{ij}$
		is a Kronecker delta.
		
		Now, we rewrite $\ED{Q}{
			\dotprod{z_i^{k+1} - \EE_Q z_i^{k+1}}{z_j^{k+1} - \EE_Q z_j^{k+1}}}$:
		\begin{eqnarray*}
			\ED{Q}{
				\dotprod{z_i^{k+1} - \EE_Q z_i^{k+1}}{z_j^{k+1} - \EE_Q z_j^{k+1}}}
			&\overset{\eqref{eq:14}}{=}&
			\theta^2
			\ED{Q}{
				\Dotprod{
					\sum_{p \in \cN_i}w_{ip}(\bar{\Delta}_{ip}^k - \bar{\Delta}_{pi}^k)
				}{
					\sum_{q \in \cN_i}w_{jq}(\bar{\Delta}_{jq}^k - \bar{\Delta}_{qj}^k)
				}
			}\\
			&=&
			\theta^2\ED{Q}{
				\Dotprod{
					\sum_{p \neq i}\mW_{ip}(\bar{\Delta}_{ip}^k - \bar{\Delta}_{pi}^k)
				}{
					\sum_{q \neq j}\mW_{jq}(\bar{\Delta}_{jq}^k - \bar{\Delta}_{qj}^k)
				}
			}\\
			&=&
			\theta^2\sum_{p \neq i}\sum_{q \neq j}
			\mW_{ip}\mW_{jq}
			\ED{Q}{
			\dotprod{\bar{\Delta}_{ip}^k - \bar{\Delta}_{pi}^k}{\bar{\Delta}_{jq}^k - \bar{\Delta}_{qj}^k}}\\
			&=&
			\theta^2\sum_{p \neq i}\sum_{q \neq j}
			\mW_{ip}\mW_{jq}
			\left[
				\delta_{ij}\delta_{pq}(\sigma_i^k  + \sigma_p^k)
				-
				\delta_{iq}\delta_{pj} (\sigma_i^k + \sigma_p^k)
			\right]\\
			&=&
			\theta^2\sum_{p \neq i}
			\mW_{ip}(\sigma_i^k  + \sigma_p^k)
			\sum_{q \neq j}
			\mW_{jq}
			\left[
				\delta_{ij}\delta_{pq} - \delta_{iq}\delta_{pj}
			\right]\\
			&=&
			\theta^2\delta_{ij}
			\sum_{p \neq i}
			\mW_{ip}^2(\sigma_i^k  + \sigma_p^k)
			\sum_{q \neq i}
			\delta_{pq}
			-
			\theta^2\mW_{ij}^2(\sigma_i^k  + \sigma_j^k)
			\sum_{p \neq i}
			\delta_{pj}
			\sum_{q \neq j}
			\delta_{iq}\\
			&=&
			\theta^2\delta_{ij}
			\sum_{p \neq i}
			\mW_{ip}^2(\sigma_i^k  + \sigma_p^k)
			-
			\theta^2(1-\delta_{ij})\mW_{ij}^2(\sigma_i^k  + \sigma_j^k).
		\end{eqnarray*}
	Plugging this into \eqref{eq:15} gives
	\begin{align*}
		\Sigma^k
		&=
		\theta^2
		\sum_{i=1}^n \sum_{j=1}^n
		\mW^\dagger_{ij}
		\delta_{ij}
		\sum_{p \neq i}
		\mW_{ip}^2(\sigma_i^k  + \sigma_p^k)
		-
		\theta^2
		\sum_{i=1}^n \sum_{j=1}^n
		\mW^\dagger_{ij}
		(1-\delta_{ij})\mW_{ij}^2(\sigma_i^k  + \sigma_j^k)\\
		&=
		\theta^2
		\sum_{i=1}^n 
		\mW^\dagger_{ii}
		\sum_{p \neq i}
		\mW_{ip}^2(\sigma_i^k  + \sigma_p^k)
		-
		\theta^2
		\sum_{i=1}^n \sum_{j\neq i}
		\mW^\dagger_{ij}
		\mW_{ij}^2(\sigma_i^k  + \sigma_j^k)\\
		&=
		\theta^2
		\sum_{i=1}^n 
		\mW^\dagger_{ii}
		\sum_{j \neq i}
		\mW_{ij}^2(\sigma_i^k  + \sigma_j^k)
		-
		\theta^2
		\sum_{i=1}^n \sum_{j\neq i}
		\mW^\dagger_{ij}
		\mW_{ij}^2(\sigma_i^k  + \sigma_j^k)\\
		&=
		\theta^2
		\sum_{i=1}^n \sum_{j\neq i}
		(\mW^\dagger_{ii} - \mW^\dagger_{ij})\mW_{ij}^2(\sigma_i^k  + \sigma_j^k)
		=
		\theta^2
		\sum_{i=1}^n \sum_{j\neq i}
		(\mW^\dagger_{ii} - \mW^\dagger_{ij})\mW_{ij}^2(\sigma_i^k  + \sigma_j^k)\\
		&=
		2\theta^2
		\sum_{i=1}^n \sum_{j\neq i}
		(\mW^\dagger_{ii} - \mW^\dagger_{ij})\mW_{ij}^2\sigma_i^k
		=
		2\theta^2
		\sum_{i=1}^n \sigma_i^k\sum_{j\neq i}
		(\mW^\dagger_{ii} - \mW^\dagger_{ij})\mW_{ij}^2\\
		&=
		2\theta^2\sum_{i=1}^n
		\sigma_i^k
		\left(
			\mW^\dagger_{ii}\sum_{j \neq i}\mW_{ij}^2
			-
			\sum_{j \neq i} \mW^\dagger_{ij} \mW_{ij}^2
		\right)
		=
		2\theta^2
		\sum_{i=1}^n \sigma_i^k
		\sum_{j=1}^n \mW^\dagger_{ij}\hat{\mW}_{ij},
	\end{align*}
	where $\hat{\mW}$ is another Laplacian matrix:
	\begin{equation}
		\hat{\mW}_{ij} = \begin{cases}
		-\mW_{ij}^2, & i \neq j\\
		\sum_{l \neq j} \mW_{il}^2, & i=j
		\end{cases}.
	\end{equation}
	This gives us
	\begin{align*}
		\Sigma^k
		&=
		2\theta^2
		\sum_{i=1}^n \sigma_i^k
		\sum_{j=1}^n \mW^\dagger_{ij}\hat{\mW}_{ij}
		=
		2\theta^2
		\sum_{i=1}^n \sigma_i^k
		[\mW^\dagger \hat{\mW}]_{ii}
		=
		2\theta^2
		\sum_{i=1}^n \sigma_i^k
		[\mW^{\dagger/2} \hat{\mW} \mW^{\dagger/2}]_{ii}\\
		&\leq
		2\theta^2\lambda_{\max}(\mW^{\dagger/2} \hat{\mW} \mW^{\dagger/2})
		\sum_{i=1}^n \sigma_i^k.
	\end{align*}
	Now, we use the definition of $\sigma_i^k$ and get
	\begin{align}
		\Sigma^k
		&\leq
		2\theta^2\lambda_{\max}(\mW^{\dagger/2} \hat{\mW} \mW^{\dagger/2})
		\sum_{i=1}^n \ED{Q}{\norm{Q(x_i^{k+1} - h_i^k) - (x_i^{k+1} - h_i^k)}_2^2}\\
		&\leq
		2\theta^2\omega\lambda_{\max}(\mW^{\dagger/2} \hat{\mW} \mW^{\dagger/2})
		\sum_{i=1}^n \omega\norm{x_i^{k+1} - h_i^k}_2^2
		=
		2\theta^2\omega\lambda_{\max}(\mW^{\dagger/2} \hat{\mW} \mW^{\dagger/2})
		\norm{\mX^{k+1} - \mH^k}_{\mI}^2\\
		&\leq
		4\theta^2\omega\lambda_{\max}(\mW^{\dagger/2} \hat{\mW} \mW^{\dagger/2})
		\left[
		\norm{\mX^{k+1} - \mX_\star}_{\mI}^2
		+
		\norm{\mH^{k} - \mX_\star}_{\mI}^2
		\right].\label{eq:16}
	\end{align}
	
	It remains to upper-bound $\lambda_{\max}(\mW^{\dagger/2} \hat{\mW} \mW^{\dagger/2})$. We note that for all $u \in \R^n$
	\begin{align*}
		u^\top \hat{\mW} u
		&=
		\sum_{i=1}^n\sum_{j=1}^n
			\hat{\mW}_{ij} u_iu_j
		=
		\sum_{i=1}^n u_i^2 \sum_{j \neq i} \mW_{ij}^2
		-
		\sum_{i=1}^n\sum_{j \neq i}u_iu_j \mW_{ij}^2\\
		&=
		\frac{1}{2}\sum_{i=1}^n\sum_{j \neq i} (u_i^2 + u_j^2) \mW_{ij}^2
		-
		\sum_{i=1}^n\sum_{j \neq i}u_iu_j \mW_{ij}^2
		=
		\frac{1}{2}\sum_{i=1}^n\sum_{j \neq i} (u_i - u_j)^2 \mW_{ij}^2\\
		&\leq
		\frac{1}{2}
		\max_{(i,j)\in E} w_{ij}
		\sum_{i=1}^n\sum_{j \neq i} (u_i - u_j)^2 \mW_{ij}
		=
		\max_{(i,j)\in E} w_{ij} \cdot u^\top \mW u.
	\end{align*} 
	Hence, for all $u \in \R^n$
	\begin{align*}
		u^\top \mW^{\dagger/2} \hat{\mW} \mW^{\dagger/2} u
		&\leq
		\max_{(i,j)\in E} w_{ij} \cdot
		u^\top \mW^{\dagger/2} \mW \mW^{\dagger/2} u
		=
		\max_{(i,j)\in E} w_{ij} \cdot
		u^\top \mW^{\dagger} \mW u
		\leq
		\max_{(i,j)\in E} w_{ij} \cdot \norm{u}_2^2
	\end{align*}
	and thus $\lambda_{\max}(\mW^{\dagger/2} \hat{\mW} \mW^{\dagger/2})$ is bounded by
	\begin{equation}
		\lambda_{\max}(\mW^{\dagger/2} \hat{\mW} \mW^{\dagger/2})
		\leq
		\max_{(i,j)\in E} w_{ij}.
	\end{equation}
	plugging this into \eqref{eq:16} gives \eqref{eq:3} and concludes the proof.
	
	\end{proof}

	\subsection{Option A}

	\begin{lemma}[Primal step, Option A]
		The following inequality holds:
		\begin{equation}\label{eq:4}
			-2\theta\dotprod{\mX^{k+1} - \mX^\star}{\mZ^k - \mZ^\star} 
			\leq
			-\frac{\theta}{L}\norm{\mZ^k - \mZ^\star}_\mI^2
			-
			\theta\mu\norm{\mX^{k+1} - \mX^\star}_\mI^2.
		\end{equation}
	\end{lemma}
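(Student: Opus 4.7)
The key observation is that Option A admits a very clean interpretation of the dual iterate $z_i^k$. Since each $f_i$ is $\mu$-strongly convex and $L$-smooth (Assumption~\ref{assumption:convex-and-smooth}), its convex conjugate $f_i^\star$ is finite everywhere and differentiable, and the Legendre--Fenchel identity gives $x = \nabla f_i^\star(z) \iff z = \nabla f_i(x)$. Applying this to the Option A update $x_i^{k+1} = \nabla f_i^\star(z_i^k)$ yields $z_i^k = \nabla f_i(x_i^{k+1})$. Combined with the definition $z_i^\star = \nabla f_i(x^\star)$, the cross term in~\eqref{eq:4} splits coordinate-wise into
\begin{equation*}
\dotprod{\mX^{k+1} - \mX^\star}{\mZ^k - \mZ^\star}
= \sum_{i=1}^n \dotprod{x_i^{k+1} - x^\star}{\nabla f_i(x_i^{k+1}) - \nabla f_i(x^\star)}.
\end{equation*}

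Next, I would apply two standard first-order inequalities that hold for any $\mu$-strongly convex and $L$-smooth $f_i$ and any $x,y \in \R^d$: the co-coercivity bound implied by $L$-smoothness,
\begin{equation*}
\dotprod{\nabla f_i(x) - \nabla f_i(y)}{x - y} \geq \tfrac{1}{L}\norm{\nabla f_i(x) - \nabla f_i(y)}^2,
\end{equation*}
and the strong monotonicity bound implied by $\mu$-strong convexity,
\begin{equation*}
\dotprod{\nabla f_i(x) - \nabla f_i(y)}{x - y} \geq \mu\,\norm{x - y}^2.
\end{equation*}
Adding these two and taking $x = x_i^{k+1}$, $y = x^\star$ gives
\begin{equation*}
2\dotprod{x_i^{k+1} - x^\star}{\nabla f_i(x_i^{k+1}) - \nabla f_i(x^\star)}
\geq \tfrac{1}{L}\norm{\nabla f_i(x_i^{k+1}) - \nabla f_i(x^\star)}^2 + \mu\,\norm{x_i^{k+1} - x^\star}^2.
\end{equation*}

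To finish, I would sum this inequality over $i \in [n]$, recognize the right-hand side as $\tfrac{1}{L}\norm{\mZ^k - \mZ^\star}_\mI^2 + \mu\,\norm{\mX^{k+1} - \mX^\star}_\mI^2$, and multiply through by $-\theta < 0$ to flip the inequality into the form stated in~\eqref{eq:4}. There is no real obstacle here; the only subtlety is recognizing that Option A is designed precisely so that $z_i^k$ coincides with $\nabla f_i(x_i^{k+1})$, turning the primal-dual cross term into a sum of standard gradient-difference inner products amenable to the textbook $\mu$-SC $+$ $L$-smooth inequality pair.
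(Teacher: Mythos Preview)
Your proof is correct and essentially mirrors the paper's. The paper works on the dual side, writing $x_i^{k+1}=\nabla f_i^\star(z_i^k)$ and $x^\star=\nabla f_i^\star(z_i^\star)$ and then applying the $\tfrac{1}{L}$-strong monotonicity and $\mu$-co-coercivity of $\nabla f_i^\star$ (which are exactly the duals of the $L$-smoothness co-coercivity and $\mu$-strong monotonicity you invoke for $\nabla f_i$); you instead invert via the Legendre--Fenchel identity to $z_i^k=\nabla f_i(x_i^{k+1})$ and apply the same pair of inequalities on the primal side. The two arguments are identical up to this change of viewpoint.
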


	\begin{proof}
		From Line~\ref{line:1} of Algorithm~\ref{alg:1} it follows that
		\begin{align*}
			-2\theta\dotprod{\mX^{k+1} - \mX^\star}{\mZ^k - \mZ^\star}
			&=
			-2\theta \sum_{i=1}^n \dotprod{\nabla f_i^\star(z_i^k) - \nabla f_i^\star(z_i^\star)}{z_i^k - z_i^\star}\\
			&\leq
			-\frac{\theta}{L}\sum_{i=1}^n \norm{z_i^k - z_i^\star}_2^2
			-\theta\mu \sum_{i=1}^n \norm{\nabla f_i^*(z_i^k) - \nabla f_i^*(z_i^\star)}_2^2\\
			&=
			-\frac{\theta}{L}\norm{\mZ^k - \mZ^\star}_\mI^2
			-
			\theta\mu\norm{\mX^{k+1} - \mX^\star}_\mI^2,
		\end{align*}
		where we used strong monotonicity and co-coercivity of $\nabla f_i^*$ in the inequality. %
	\end{proof}
	
	\begin{theorem}[Convergence of Algorithm~\ref{alg:1}, Option A]\label{thm:A}
		Let $\Psi_A^k$ be a Lyapunov function which is defined as follows:
		\begin{equation}
			\Psi_A^k
			= 
			\norm{\mZ^{k} - \mZ^\star}_{\mW^{\dagger}}^2
			+
			\frac{8\theta^2\omega \max_{(i,j)\in E}w_{ij}}{\alpha}
			\norm{\mH^{k} - \mX^\star}_\mI^2.
		\end{equation}
		Let $\rho_A$ be defined as follows:
		\begin{equation}
		\rho_A = \max \left\{2(\omega + 1), \frac{L(\lambda_{\max}(\mW) + 12\omega\max_{(i,j)\in E}w_{ij})}{\mu \lambda_{\min}^+(\mW)}\right\}^{-1}.
		\end{equation}
		Choosing the stepsize $\theta$ as
		\begin{equation}\label{eq:8}
			\theta = \frac{\mu}{\lambda_{\max}(\mW) + 12\omega\max_{(i,j)\in E}w_{ij}}
		\end{equation}
		and stepsize $\alpha = \frac{1}{\omega + 1}$ gives the following inequality:
		\begin{equation}
			\E{
				\Psi_A^{k+1}
			}
			\leq
			(1-\rho_A)\Psi_A^k.
		\end{equation}
	\end{theorem}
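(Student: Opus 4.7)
The plan is to combine the four ingredients already established: the dual-iterate inequality~\eqref{eq:2}, the variance bound~\eqref{eq:3}, the Option A primal bound~\eqref{eq:4}, and the $\mH^k$-tracking inequality~\eqref{eq:7}, and then tune the Lyapunov coefficient so that all $\E{\norm{\mX^{k+1}-\mX^\star}_\mI^2}$ terms cancel exactly under the prescribed step size~\eqref{eq:8}. The natural template is to define $\Psi_A^k$ as $\norm{\mZ^k-\mZ^\star}_{\mW^\dagger}^2$ plus a carefully chosen multiple of $\norm{\mH^k-\mX^\star}_\mI^2$; the coefficient $\tfrac{8\theta^2\omega\max_{(i,j)\in E}w_{ij}}{\alpha}$ in the statement is exactly what is needed for the cancellation to work out.

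First I would substitute~\eqref{eq:4} and~\eqref{eq:3} into~\eqref{eq:2}, then apply~\eqref{eq:5} to $\theta^2 \norm{\mX^{k+1}-\mX^\star}_\mW^2$, giving an intermediate bound of the form
\begin{align*}
\E{\norm{\mZ^{k+1}-\mZ^\star}_{\mW^\dagger}^2}
\leq{}& \norm{\mZ^k-\mZ^\star}_{\mW^\dagger}^2
-\tfrac{\theta}{L}\norm{\mZ^k-\mZ^\star}_\mI^2 \\
&+\bigl(-\theta\mu+\theta^2\lambda_{\max}(\mW)+4\theta^2\omega\max_{(i,j)\in E}w_{ij}\bigr)\E{\norm{\mX^{k+1}-\mX^\star}_\mI^2}\\
&+4\theta^2\omega\max_{(i,j)\in E}w_{ij}\,\norm{\mH^k-\mX^\star}_\mI^2.
\end{align*}
Next I would add the $\mH$-term: multiplying~\eqref{eq:7} by $\tfrac{8\theta^2\omega\max_{(i,j)\in E}w_{ij}}{\alpha}$ contributes $8\theta^2\omega\max_{(i,j)\in E}w_{ij}$ to the $\E{\norm{\mX^{k+1}-\mX^\star}_\mI^2}$ coefficient and turns the total $\mX^{k+1}$ coefficient into $-\theta\mu+\theta^2(\lambda_{\max}(\mW)+12\omega\max_{(i,j)\in E}w_{ij})$. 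The step-size choice~\eqref{eq:8} makes this exactly zero, which is the crucial identity making the proof close.

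The remaining bookkeeping is to read off contraction rates for the two surviving terms. The $\mH$-coefficient becomes $\tfrac{8\theta^2\omega\max_{(i,j)\in E}w_{ij}}{\alpha}\bigl((1-\alpha)+\tfrac{\alpha}{2}\bigr)$, giving contraction factor $1-\tfrac{\alpha}{2}=1-\tfrac{1}{2(\omega+1)}$. For the $\mZ$-term, applying~\eqref{eq:6} to $-\tfrac{\theta}{L}\norm{\mZ^k-\mZ^\star}_\mI^2$ yields contraction factor $1-\tfrac{\theta\lambda_{\min}^+(\mW)}{L}=1-\tfrac{\mu\lambda_{\min}^+(\mW)}{L(\lambda_{\max}(\mW)+12\omega\max_{(i,j)\in E}w_{ij})}$. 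Taking the worse of the two contractions precisely matches the definition of $\rho_A$, so $\E{\Psi_A^{k+1}}\leq(1-\rho_A)\Psi_A^k$ follows.

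The main obstacle is really only the bookkeeping of constants: the coefficient $\tfrac{8}{\alpha}$ in front of the $\mH$-term and the constant $12$ in $\theta$ are coupled, since together they must (i) annihilate the $\mX^{k+1}$ coefficient and (ii) leave enough slack in the $\mH$ term to produce an $\alpha/2$-style contraction rather than needing the full $\alpha$. Once the algebra is set up as above, this is a straightforward verification, and nothing in the argument is Option-A-specific beyond the use of~\eqref{eq:4} (which is where strong monotonicity and co-coercivity of $\nabla f_i^\star$ enter); the same skeleton will be reused for Options B, C, D with \eqref{eq:4} replaced by the appropriate primal-step lemma.
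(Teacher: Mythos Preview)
Your proposal is correct and follows essentially the same approach as the paper's proof: combine \eqref{eq:2} with \eqref{eq:4}, \eqref{eq:5}, \eqref{eq:6}, \eqref{eq:3}, then add $\tfrac{8\theta^2\omega\max_{(i,j)\in E}w_{ij}}{\alpha}$ times \eqref{eq:7}, use \eqref{eq:8} to kill the $\E{\norm{\mX^{k+1}-\mX^\star}_\mI^2}$ coefficient, and read off the two contraction factors $1-\tfrac{\alpha}{2}$ and $1-\tfrac{\theta\lambda_{\min}^+(\mW)}{L}$. The bookkeeping you outline (the $4+8=12$ and the $(1-\alpha)+\alpha/2=1-\alpha/2$ identities) matches the paper exactly.
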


	\begin{proof}
		We start with rewriting \eqref{eq:2}:
		\begin{eqnarray*}
		\E{\norm{\mZ^{k+1} - \mZ^\star}_{\mW^{\dagger}}^2}
		&\overset{\eqref{eq:2}}{\leq}&
		\norm{\mZ^k - \mZ^\star}_{\mW^{\dagger}}^2
		+
		\E{-
			2\theta\dotprod{\mX^{k+1} - \mX^\star}{\mZ^k - \mZ^\star}
			+
			\theta^2
			\norm{\mX^{k+1} - \mX^\star}_\mW^2
			+
			\Sigma^k
		}\\
		&\overset{\eqref{eq:4}}{\leq}&
		\norm{\mZ^k - \mZ^\star}_{\mW^{\dagger}}^2
		-
		\frac{\theta}{L}\norm{\mZ^k - \mZ^\star}_\mI^2
		-
		\theta\mu\norm{\mX^{k+1} - \mX^\star}_\mI^2
		+
		\theta^2\norm{\mX^{k+1} - \mX^\star}_\mW^2
		+
		\E{\Sigma^k}\\
		&\overset{\eqref{eq:5},\eqref{eq:6}}{\leq}&
		\left(
			1 - \frac{\theta\lambda_{\min}^+(\mW)}{L}
		\right)
		\norm{\mZ^k - \mZ^\star}_{\mW^{\dagger}}^2
		-
		\theta(\mu - \theta\lambda_{\max}(\mW))\norm{\mX^{k+1} - \mX^\star}_\mI^2
		+
		\E{\Sigma^k}\\
		&\overset{\eqref{eq:3}}{\leq}&
		\left(
		1 - \frac{\theta\lambda_{\min}^+(\mW)}{L}
		\right)
		\norm{\mZ^k - \mZ^\star}_{\mW^{\dagger}}^2
		+
		4\theta^2\omega \max_{(i,j)\in E}w_{ij}\norm{\mH^{k} - \mX_\star}_{\mI}^2\\
		&-&
		\theta\left(
			\mu - \theta\left[\lambda_{\max}(\mW) + 4\omega\max_{(i,j)\in E}w_{ij} \right]
		\right)\norm{\mX^{k+1} - \mX^\star}_\mI^2.
		\end{eqnarray*}
		Now, we combine this with \eqref{eq:7}:
		\begin{align*}
		\E{
			\Psi_A^{k+1}
		}
		&\leq
			\left(
		1 - \frac{\theta\lambda_{\min}^+(\mW)}{L}
		\right)
		\norm{\mZ^k - \mZ^\star}_{\mW^{\dagger}}^2
		+
		4\theta^2\omega \max_{(i,j)\in E}w_{ij}\norm{\mH^{k} - \mX_\star}_{\mI}^2\\
		&-
		\theta\left(
		\mu - \theta\left[\lambda_{\max}(\mW) + 4\omega\max_{(i,j)\in E}w_{ij} \right]
		\right)\norm{\mX^{k+1} - \mX^\star}_\mI^2\\
		&+
		(1-\alpha)\frac{8\theta^2\omega \max_{(i,j)\in E}w_{ij}}{\alpha}
		\norm{\mH^{k} - \mX^\star}_\mI^2
		+
		8\theta^2\omega \max_{(i,j)\in E}w_{ij}
		\norm{\mX^{k+1} - \mX^\star}_\mI^2\\
		&=
		\left(
		1 - \frac{\theta\lambda_{\min}^+(\mW)}{L}
		\right)\norm{\mZ^k - \mZ^\star}_{\mW^{\dagger}}^2
		+
		\left(1 - \frac{\alpha}{2}\right)\frac{8\theta^2\omega \max_{(i,j)\in E}w_{ij}}{\alpha}
		\norm{\mH^{k} - \mX^\star}_\mI^2\\
		&+
		\theta\left(
		\mu - \theta\left[\lambda_{\max}(\mW) + 12\omega\max_{(i,j)\in E}w_{ij} \right]
		\right)\norm{\mX^{k+1} - \mX^\star}_\mI^2.
		\end{align*}
		Using \eqref{eq:8} we get
		\begin{align*}
		\E{
			\Psi_A^{k+1}
		}
		&\leq
		\left(
		1 - \frac{\theta\lambda_{\min}^+(\mW)}{L}
		\right)\norm{\mZ^k - \mZ^\star}_{\mW^{\dagger}}^2
		+
		\left(1 - \frac{\alpha}{2}\right)\frac{8\theta^2\omega \max_{(i,j)\in E}w_{ij}}{\alpha}
		\norm{\mH^{k} - \mX^\star}_\mI^2\\
		&\leq
		(1-\rho_A)\Psi_A^k,
		\end{align*}
		which concludes the proof.
	\end{proof}

	\subsection{Option B}

	\begin{lemma}[Primal step, Option B]
		Let $\eta \leq \frac{1}{L}$. Then the following inequality holds:
		\begin{align}
		-2\theta \dotprod{\mX^{k+1} - \mX^\star}{\mZ^k - \mZ^\star}
		&\leq
		-\eta \theta \norm{\mZ^k - \mZ^\star}_\mI^2
		-
		\frac{\theta\mu}{2}\norm{\mX^{k+1} - \mX^\star}_\mI^2\nonumber\\
		&+
		(1-\eta\mu)\frac{\theta}{\eta}\norm{\mX^k - \mX^\star}_\mI^2
		-
		\left(1 - \frac{\eta\mu}{2}\right)\frac{\theta}{\eta}\norm{\mX^{k+1} - \mX^\star}_\mI^2.\label{eq:17}
		\end{align}
	\end{lemma}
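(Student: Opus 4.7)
The plan is to rewrite the mixed primal--dual inner product in terms of purely primal quantities using the \textbf{Option B} update, and then exploit strong convexity and cocoercivity of each $f_i$. The key identification is $\mZ^\star = \nabla F(\mX^\star)$ with $\nabla F(\mX) := [\nabla f_1(x_1),\ldots,\nabla f_n(x_n)]$, since $z_i^\star = \nabla f_i(x^\star)$ by definition. Abbreviate $\mG^k := \nabla F(\mX^k) - \nabla F(\mX^\star)$, and note that line~\ref{line:2} rearranges to $\mX^{k+1} - \mX^k = -\eta(\mG^k - (\mZ^k - \mZ^\star))$, i.e.\ $\mZ^k - \mZ^\star = \mG^k + \tfrac{1}{\eta}(\mX^{k+1} - \mX^k)$.

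First I would split $\mX^{k+1} - \mX^\star = (\mX^k - \mX^\star) + (\mX^{k+1} - \mX^k)$ inside the inner product. Substituting the above expression for $\mZ^k - \mZ^\star$, using the polarization identity $2\dotprod{\mX^k - \mX^\star}{\mX^{k+1} - \mX^k} = \norm{\mX^{k+1} - \mX^\star}_\mI^2 - \norm{\mX^k - \mX^\star}_\mI^2 - \norm{\mX^{k+1} - \mX^k}_\mI^2$, and finally expanding $\norm{\mX^{k+1} - \mX^k}_\mI^2 = \eta^2 \norm{(\mZ^k - \mZ^\star) - \mG^k}_\mI^2$, all cross terms of the form $\dotprod{\mG^k}{\mZ^k - \mZ^\star}$ will cancel exactly, leaving
\begin{align*}
-2\dotprod{\mX^{k+1}-\mX^\star}{\mZ^k - \mZ^\star}
&= -2\dotprod{\mX^k - \mX^\star}{\mG^k} + \eta\norm{\mG^k}_\mI^2 \\
&\qquad - \eta\norm{\mZ^k - \mZ^\star}_\mI^2 + \tfrac{1}{\eta}\bigl(\norm{\mX^k - \mX^\star}_\mI^2 - \norm{\mX^{k+1}-\mX^\star}_\mI^2\bigr).
\end{align*}

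What remains is to bound $-2\dotprod{\mX^k - \mX^\star}{\mG^k} + \eta \norm{\mG^k}_\mI^2$. Summing cocoercivity of each $\nabla f_i$ (which requires $\eta \leq 1/L$) gives $\eta\norm{\mG^k}_\mI^2 \leq \dotprod{\mX^k - \mX^\star}{\mG^k}$, and $\mu$-strong convexity gives $\dotprod{\mX^k - \mX^\star}{\mG^k} \geq \mu\norm{\mX^k - \mX^\star}_\mI^2$; chaining these yields $-2\dotprod{\mX^k - \mX^\star}{\mG^k} + \eta \norm{\mG^k}_\mI^2 \leq -\mu\norm{\mX^k - \mX^\star}_\mI^2$. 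Plugging in, multiplying by $\theta$, and splitting $-\tfrac{\theta}{\eta}\norm{\mX^{k+1}-\mX^\star}_\mI^2 = -\tfrac{\theta(1-\eta\mu/2)}{\eta}\norm{\mX^{k+1}-\mX^\star}_\mI^2 - \tfrac{\theta\mu}{2}\norm{\mX^{k+1}-\mX^\star}_\mI^2$ reproduces~\eqref{eq:17} exactly.

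The main obstacle is the exact cancellation in the second step: a naive Young's-inequality bound on $\dotprod{\mG^k}{\mZ^k - \mZ^\star}$ would introduce a spurious positive multiple of $\norm{\mG^k}_\mI^2$ that cannot be absorbed by the cocoercivity argument without loosening the constant in front of $\norm{\mZ^k - \mZ^\star}_\mI^2$. Expanding $\norm{\mX^{k+1}-\mX^k}_\mI^2$ using the update identity (rather than by Young) is precisely what makes the cross terms vanish and allows $-\eta\norm{\mZ^k - \mZ^\star}_\mI^2$ to survive with exactly the required coefficient.
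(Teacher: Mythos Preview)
Your argument is correct and arrives at exactly the same inequality as the paper, via the same underlying ingredients (cocoercivity plus strong monotonicity of $\nabla f_i$, for $\eta\le 1/L$). The paper packages the computation more compactly: it observes that by the update rule $x_i^{k+1} - \eta z_i^k = x_i^k - \eta\nabla f_i(x_i^k)$, so $\norm{x_i^{k+1} - x^\star - \eta(z_i^k - z_i^\star)}_2^2 = \norm{x_i^k - x^\star - \eta(\nabla f_i(x_i^k) - \nabla f_i(x^\star))}_2^2 \leq (1-\eta\mu)\norm{x_i^k - x^\star}_2^2$, invoking the standard gradient-descent contraction in one line. Summing over $i$ and expanding the left-hand square immediately gives $-2\eta\dotprod{\mX^{k+1}-\mX^\star}{\mZ^k-\mZ^\star} \leq (1-\eta\mu)\norm{\mX^k-\mX^\star}_\mI^2 - \norm{\mX^{k+1}-\mX^\star}_\mI^2 - \eta^2\norm{\mZ^k-\mZ^\star}_\mI^2$, after which the same final split is applied. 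Your route---splitting $\mX^{k+1}-\mX^\star$, applying polarization, and expanding $\norm{\mX^{k+1}-\mX^k}_\mI^2$---is really just the same contraction inequality unrolled by hand; the cross-term cancellation you worry about is automatic in the paper's framing because it never introduces those terms in the first place. Both are valid; the paper's version saves you the bookkeeping.
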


	\begin{proof}
		From Line~\ref{line:2} of Algorithm~\ref{alg:1} it follows that
		\begin{align*}
			\norm{x_i^{k+1} - x^\star - \eta(z_i^k - z_i^\star)}_2^2 &= \norm{x_i^k - x^\star - \eta (\nabla f_i(x_i^k) - \nabla f_i(x^\star)) }_2^2\\
			&\leq (1-\eta\mu)\norm{x_i^k - x_i^\star}_2^2
		\end{align*}
		for any stepsize $\eta \leq \frac{1}{L}$, which leads to
		\begin{align*}
			(1-\eta\mu)\norm{\mX^k - \mX^\star}_\mI^2
			&= 
			\sum_{i=1}^n  (1-\eta\mu)\norm{x_i^k - x_i^\star}_2^2
			\geq
			\sum_{i=1}^n  \norm{x_i^{k+1} - x^\star - \eta(z_i^k - z_i^\star)}_2^2
			\\
			&=
			\norm{\mX^{k+1} - \mX^\star - \eta(\mZ^k - \mZ^\star)}_{\mI}^2\\
			&=
			\norm{\mX^{k+1} - \mX^\star}_\mI^2 + \eta^2 \norm{\mZ^k - \mZ^\star}_\mI^2
			-
			2\eta\dotprod{\mX^{k+1} - \mX^\star}{\mZ^k - \mZ^\star}.
		\end{align*}
		After rearranging, we get
		\begin{align*}
			-2\theta \dotprod{\mX^{k+1} - \mX^\star}{\mZ^k - \mZ^\star}
			&\leq
			-\eta \theta \norm{\mZ^k - \mZ^\star}_\mI^2
			-
			\frac{\theta\mu}{2}\norm{\mX^{k+1} - \mX^\star}_\mI^2\\
			&+
			(1-\eta\mu)\frac{\theta}{\eta}\norm{\mX^k - \mX^\star}_\mI^2
			-
			\left(1 - \frac{\eta\mu}{2}\right)\frac{\theta}{\eta}\norm{\mX^{k+1} - \mX^\star}_\mI^2,
		\end{align*}
		which concludes the proof.
		
	\end{proof}

	\begin{theorem}[Convergence of Algorithm~\ref{alg:1}, Option B]\label{thm:B}
		Let $\Psi_B^k$ be a Lyapunov function defined as follows:
		\begin{equation}
			\Psi_B^k = \norm{\mZ^k - \mZ^\star}_{\mW^\dagger}^2 + \frac{(1 - \eta\mu/2)\theta}{\eta}\norm{\mX^k - \mX^\star}_\mI^2 + \frac{8\theta^2\omega \max_{(i,j)\in E}w_{ij}}{\alpha}
			\norm{\mH^{k} - \mX^\star}_\mI^2.
		\end{equation}
		Let $\rho_B$ be defined as follows:
		\begin{equation}
		\rho_B = \max \left\{2(\omega + 1), \frac{2L(\lambda_{\max}(\mW) + 12\omega\max_{(i,j)\in E}w_{ij})}{\mu \lambda_{\min}^+(\mW)}\right\}^{-1}.
		\end{equation}
		Choosing stepsize $\theta$
		\begin{equation}\label{eq:18}
		\theta = \frac{\mu}{2\lambda_{\max}(\mW) + 24\omega\max_{(i,j)\in E}w_{ij}},
		\end{equation}
		stepsize $\eta = \frac{1}{L}$ and stepsize $\alpha = \frac{1}{\omega + 1}$ gives the following inequality:
		\begin{equation}
			\E{\Psi_B^{k+1}} \leq (1-\rho_B)\Psi_B^k.
		\end{equation}
		
	\end{theorem}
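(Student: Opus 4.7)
} The plan is to mirror the Lyapunov argument of Theorem~\ref{thm:A} but replace the strong monotonicity estimate (\ref{eq:4}) on the inner product $\dotprod{\mX^{k+1} - \mX^\star}{\mZ^k - \mZ^\star}$ with the Option B bound (\ref{eq:17}). The crucial difference is that (\ref{eq:17}) generates a new positive ``lagging'' term $\tfrac{(1-\eta\mu)\theta}{\eta}\norm{\mX^k - \mX^\star}_\mI^2$ and a matching negative ``leading'' term $-\tfrac{(1-\eta\mu/2)\theta}{\eta}\norm{\mX^{k+1} - \mX^\star}_\mI^2$. To turn this telescoping structure into a contraction, the Lyapunov function must carry an extra $\tfrac{(1-\eta\mu/2)\theta}{\eta}\norm{\mX^k - \mX^\star}_\mI^2$ term---exactly the second summand in $\Psi_B^k$.

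The first step is to start from the dual recursion (\ref{eq:2}), substitute (\ref{eq:17}), and use (\ref{eq:5}) to bound $\theta^2 \norm{\mX^{k+1} - \mX^\star}_\mW^2 \leq \theta^2\lambda_{\max}(\mW)\norm{\mX^{k+1} - \mX^\star}_\mI^2$, (\ref{eq:6}) to convert $-\eta\theta\norm{\mZ^k-\mZ^\star}_\mI^2$ into $-\eta\theta\lambda_{\min}^+(\mW)\norm{\mZ^k-\mZ^\star}_{\mW^\dagger}^2$, and (\ref{eq:3}) to control $\E{\Sigma^k}$. The next step is to add $\tfrac{8\theta^2\omega\max_{(i,j)\in E}w_{ij}}{\alpha}\E{\norm{\mH^{k+1}-\mX^\star}_\mI^2}$ to both sides and apply (\ref{eq:7}); this yields a contraction factor $(1-\alpha/2)$ on the $\mH$-part at the cost of an extra $8\theta^2\omega\max w_{ij}\,\E{\norm{\mX^{k+1}-\mX^\star}_\mI^2}$ on the right-hand side.

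Collecting all the $\E{\norm{\mX^{k+1}-\mX^\star}_\mI^2}$ contributions, the combined coefficient equals
$\theta\bigl[-\tfrac{\mu}{2} + \theta\bigl(\lambda_{\max}(\mW) + 12\omega\max w_{ij}\bigr)\bigr],$
which vanishes exactly for the choice of $\theta$ in (\ref{eq:18}); this mirrors the cancellation that occurs in Theorem~\ref{thm:A} and is the one place where the stepsize must be tuned precisely. After this cancellation, the residual inequality reads
\begin{equation*}
\E{\Psi_B^{k+1}} \leq \bigl(1 - \eta\theta\lambda_{\min}^+(\mW)\bigr)\norm{\mZ^k-\mZ^\star}_{\mW^\dagger}^2 + \tfrac{(1-\eta\mu)\theta}{\eta}\norm{\mX^k-\mX^\star}_\mI^2 + \tfrac{8\theta^2\omega\max w_{ij}}{\alpha}(1-\tfrac{\alpha}{2})\norm{\mH^k-\mX^\star}_\mI^2.
\end{equation*}

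Finally, I would compare each term against its counterpart in $\Psi_B^k$. The three decay factors are $\bigl(1-\eta\theta\lambda_{\min}^+(\mW)\bigr)$ for the dual, $\tfrac{1-\eta\mu}{1-\eta\mu/2} \leq 1 - \tfrac{\eta\mu}{2}$ for the primal, and $(1-\alpha/2)$ for the $\mH$ part. With $\eta = 1/L$, the prescribed $\theta$ and $\alpha = 1/(\omega+1)$, the dual rate is $1 - \tfrac{\mu\lambda_{\min}^+(\mW)}{2L(\lambda_{\max}(\mW)+12\omega\max w_{ij})}$, the primal rate is at most $1 - \tfrac{1}{2\kappa}$ (which is dominated by the dual rate since $\lambda_{\min}^+(\mW)/(\lambda_{\max}(\mW)+12\omega\max w_{ij}) \leq 1$), and the $\mH$ rate is $1 - \tfrac{1}{2(\omega+1)}$; taking the worst of these three yields $1 - \rho_B$ for the stated $\rho_B$. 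The main obstacle is the book-keeping around the $\norm{\mX^{k+1}-\mX^\star}_\mI^2$ coefficients to see that (\ref{eq:18}) precisely zeroes them out; once this is done, the remainder is a direct three-way contraction comparison just as in Theorem~\ref{thm:A}.
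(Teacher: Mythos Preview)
Your proposal is correct and follows essentially the same route as the paper's proof: start from the dual recursion~(\ref{eq:2}), plug in the Option~B primal estimate~(\ref{eq:17}), apply the eigenvalue bounds~(\ref{eq:5})--(\ref{eq:6}) and the variance bound~(\ref{eq:3}), then add the weighted $\mH$-recursion~(\ref{eq:7}) so that the $\E{\norm{\mX^{k+1}-\mX^\star}_\mI^2}$ coefficient becomes $-\theta\bigl(\tfrac{\mu}{2}-\theta[\lambda_{\max}(\mW)+12\omega\max w_{ij}]\bigr)$ and vanishes for the stepsize~(\ref{eq:18}); the remaining three contraction factors are compared exactly as you describe. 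The only step you leave implicit is that the $+\tfrac{(1-\eta\mu/2)\theta}{\eta}\E{\norm{\mX^{k+1}-\mX^\star}_\mI^2}$ contribution from the definition of $\Psi_B^{k+1}$ cancels the matching negative ``leading'' term from~(\ref{eq:17}) before you collect coefficients---but your residual inequality is correct, so this is clearly understood.
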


	\begin{proof}
		We start with rewriting \eqref{eq:2}:
		\begin{eqnarray*}
		\E{\norm{\mZ^{k+1} - \mZ^\star}_{\mW^{\dagger}}^2}
		&\overset{\eqref{eq:2}}{\leq}&
		\norm{\mZ^k - \mZ^\star}_{\mW^{\dagger}}^2
		+
		\E{-
			2\theta\dotprod{\mX^{k+1} - \mX^\star}{\mZ^k - \mZ^\star}
			+
			\theta^2
			\norm{\mX^{k+1} - \mX^\star}_\mW^2
			+
			\Sigma^k
		}\\
		&\overset{\eqref{eq:17}}{\leq}&
		\norm{\mZ^k - \mZ^\star}_{\mW^{\dagger}}^2
		-
		\eta\theta\norm{\mZ^k - \mZ^\star}_\mI^2
		-
		\frac{\theta\mu}{2}\norm{\mX^{k+1} - \mX^\star}_\mI^2
		+
		\theta^2\norm{\mX^{k+1} - \mX^\star}_\mW^2
		+
		\E{\Sigma^k}\\
		&+&
		(1-\eta\mu)\frac{\theta}{\eta}\norm{\mX^k - \mX^\star}_\mI^2
		-
		\left(1 - \frac{\eta\mu}{2}\right)\frac{\theta}{\eta}\norm{\mX^{k+1} - \mX^\star}_\mI^2
		\\
		&\overset{\eqref{eq:5},\eqref{eq:6}}{\leq}&
		(
		1 - \eta\theta\lambda_{\min}^+(\mW)
		)
		\norm{\mZ^k - \mZ^\star}_{\mW^{\dagger}}^2
		-
		\theta\left(\frac{\mu}{2} - \theta\lambda_{\max}(\mW)\right)\norm{\mX^{k+1} - \mX^\star}_\mI^2
		+
		\E{\Sigma^k}\\
		&+&
		(1-\eta\mu)\frac{\theta}{\eta}\norm{\mX^k - \mX^\star}_\mI^2
		-
		\left(1 - \frac{\eta\mu}{2}\right)\frac{\theta}{\eta}\norm{\mX^{k+1} - \mX^\star}_\mI^2
		\\
		&\overset{\eqref{eq:3}}{\leq}&
		(
		1 - \eta\theta\lambda_{\min}^+(\mW)
		)
		\norm{\mZ^k - \mZ^\star}_{\mW^{\dagger}}^2
		+
		4\theta^2\omega \max_{(i,j)\in E}w_{ij}\norm{\mH^{k} - \mX_\star}_{\mI}^2\\
		&-&
		\theta\left(
		\frac{\mu}{2} - \theta\left[\lambda_{\max}(\mW) + 4\omega\max_{(i,j)\in E}w_{ij} \right]
		\right)\norm{\mX^{k+1} - \mX^\star}_\mI^2\\
		&+&
		(1-\eta\mu)\frac{\theta}{\eta}\norm{\mX^k - \mX^\star}_\mI^2
		-
		\left(1 - \frac{\eta\mu}{2}\right)\frac{\theta}{\eta}\norm{\mX^{k+1} - \mX^\star}_\mI^2.
		\end{eqnarray*}
		Now, we combine this with \eqref{eq:7}:
		\begin{align*}
			\E{\Psi_B^{k+1}}
			&\leq
			(1 - \eta\theta\lambda_{\min}^+(\mW))\norm{\mZ^k - \mZ^\star}_{\mW^{\dagger}}^2
			+
			(1-\eta\mu)\frac{\theta}{\eta}\norm{\mX^k - \mX^\star}_\mI^2
			+
			4\theta^2\omega \max_{(i,j)\in E}w_{ij}\norm{\mH^{k} - \mX_\star}_{\mI}^2\\
			&-
			\theta\left(
			\frac{\mu}{2} - \theta\left[\lambda_{\max}(\mW) + 4\omega\max_{(i,j)\in E}w_{ij} \right]
			\right)\norm{\mX^{k+1} - \mX^\star}_\mI^2\\
			&+
			(1-\alpha)\frac{8\theta^2\omega \max_{(i,j)\in E}w_{ij}}{\alpha}
			\norm{\mH^{k} - \mX^\star}_\mI^2
			+
			8\theta^2\omega \max_{(i,j)\in E}w_{ij}
			\norm{\mX^{k+1} - \mX^\star}_\mI^2\\
			&=
			(1 - \eta\theta\lambda_{\min}^+(\mW))\norm{\mZ^k - \mZ^\star}_{\mW^{\dagger}}^2
			+
			(1-\eta\mu)\frac{\theta}{\eta}\norm{\mX^k - \mX^\star}_\mI^2\\
			&+
			\left(1-\frac{\alpha}{2}\right)\frac{8\theta^2\omega \max_{(i,j)\in E}w_{ij}}{\alpha}
			\norm{\mH^{k} - \mX^\star}_\mI^2\\
			&-
			\theta\left(
			\frac{\mu}{2} - \theta\left[\lambda_{\max}(\mW) + 12\omega\max_{(i,j)\in E}w_{ij} \right]
			\right)\norm{\mX^{k+1} - \mX^\star}_\mI^2.
		\end{align*}
		Using \eqref{eq:18} we get
		\begin{align*}
			\E{\Psi_B^{k+1}}
			&\leq
			(1 - \eta\theta\lambda_{\min}^+(\mW))\norm{\mZ^k - \mZ^\star}_{\mW^{\dagger}}^2
			+
			\left(1 - \frac{\eta\mu}{2 - \eta\mu}\right)\frac{(1 - \eta\mu/2)\theta}{\eta}\norm{\mX^k - \mX^\star}_\mI^2\\
			&+
			\left(1-\frac{\alpha}{2}\right)\frac{8\theta^2\omega \max_{(i,j)\in E}w_{ij}}{\alpha}
			\norm{\mH^{k} - \mX^\star}_\mI^2\\
			&\leq
			(1 - \eta\theta\lambda_{\min}^+(\mW))\norm{\mZ^k - \mZ^\star}_{\mW^{\dagger}}^2
			+
			\left(1 - \frac{\eta\mu}{2}\right)\frac{(1 - \eta\mu/2)\theta}{\eta}\norm{\mX^k - \mX^\star}_\mI^2\\
			&+
			\left(1-\frac{\alpha}{2}\right)\frac{8\theta^2\omega \max_{(i,j)\in E}w_{ij}}{\alpha}
			\norm{\mH^{k} - \mX^\star}_\mI^2\\
			&\leq
			(1-\rho_B)\Psi_B^k,
		\end{align*}
		which concludes the proof.
	\end{proof}

	\subsection{Option C}

	\begin{lemma}[Primal step, Option C]
		Let $\eta \leq \frac{1}{2L}$. Then the following inequality holds:
		\begin{align}
		-2\theta \E{\dotprod{\mX^{k+1} - \mX^\star}{\mZ^k - \mZ^\star}}
		&\leq
		-\eta \theta \norm{\mZ^k - \mZ^\star}_\mI^2
		-
		\frac{\theta\mu}{2}\E{\norm{\mX^{k+1} - \mX^\star}_\mI^2} + 2n\eta\theta\sigma^2\\
		&+
		(1-\eta\mu)\frac{\theta}{\eta}\norm{\mX^k - \mX^\star}_\mI^2
		-
		\left(1 - \frac{\eta\mu}{2}\right)\frac{\theta}{\eta}\E{\norm{\mX^{k+1} - \mX^\star}_\mI^2}. \label{eq:24}
		\end{align}
	\end{lemma}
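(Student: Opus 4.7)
The plan is to mimic the proof of the Option~B lemma but track the extra variance coming from stochastic gradients. From Line~\ref{line:3} of Algorithm~\ref{alg:1} and the identity $z_i^\star = \nabla f_i(x^\star)$, I first rewrite
\begin{equation*}
x_i^{k+1} - x^\star - \eta(z_i^k - z_i^\star) = x_i^k - x^\star - \eta\bigl(\nabla f_i(x_i^k,\xi_i^k) - \nabla f_i(x^\star)\bigr).
\end{equation*}
Taking $\ED{\xi_i^k}{\cdot}$ of the squared norm and using $\ED{\xi_i^k}{\nabla f_i(x_i^k,\xi_i^k)} = \nabla f_i(x_i^k)$ for the inner-product cross term yields
\begin{equation*}
\ED{\xi_i^k}{\norm{x_i^{k+1} - x^\star - \eta(z_i^k - z_i^\star)}_2^2}
= \norm{x_i^k - x^\star}_2^2 - 2\eta\dotprod{x_i^k - x^\star}{\nabla f_i(x_i^k) - \nabla f_i(x^\star)} + \eta^2 \ED{\xi_i^k}{\norm{\nabla f_i(x_i^k,\xi_i^k) - \nabla f_i(x^\star)}_2^2}.
\end{equation*}

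Next I would handle the two nontrivial pieces. For the cross term, strong convexity~\eqref{eq:22} combined with the convexity lower bound gives
\begin{equation*}
\dotprod{x_i^k - x^\star}{\nabla f_i(x_i^k) - \nabla f_i(x^\star)} \geq B_{f_i}(x_i^k,x^\star) + \tfrac{\mu}{2}\norm{x_i^k - x^\star}_2^2.
\end{equation*}
For the variance term, add and subtract $\nabla f_i(x^\star,\xi_i^k)$, apply Young's inequality, and invoke~\eqref{eq:21} and~\eqref{eq:20} to obtain
\begin{equation*}
\ED{\xi_i^k}{\norm{\nabla f_i(x_i^k,\xi_i^k) - \nabla f_i(x^\star)}_2^2} \leq 4L B_{f_i}(x_i^k,x^\star) + 2\sigma_i^2.
\end{equation*}
Plugging both estimates in produces a $-2\eta(1 - 2\eta L) B_{f_i}(x_i^k,x^\star)$ term that is non-positive by the assumption $\eta\leq\tfrac{1}{2L}$, so it can be dropped. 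Summing over $i\in[n]$ and using $\tfrac{1}{n}\sum_i \sigma_i^2 = \sigma^2$ yields
\begin{equation*}
\E{\norm{\mX^{k+1} - \mX^\star - \eta(\mZ^k - \mZ^\star)}_\mI^2} \leq (1-\eta\mu)\norm{\mX^k - \mX^\star}_\mI^2 + 2n\eta^2\sigma^2.
\end{equation*}

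Finally, I would expand the square on the left as
$\E{\norm{\mX^{k+1} - \mX^\star}_\mI^2} + \eta^2\norm{\mZ^k - \mZ^\star}_\mI^2 - 2\eta\E{\dotprod{\mX^{k+1} - \mX^\star}{\mZ^k - \mZ^\star}}$,
isolate the cross term, and multiply both sides by $\theta/\eta$. A trivial algebraic split of the coefficient $\tfrac{\theta}{\eta}\E{\norm{\mX^{k+1} - \mX^\star}_\mI^2}$ into $\tfrac{\theta\mu}{2}\E{\norm{\mX^{k+1} - \mX^\star}_\mI^2} + \bigl(1 - \tfrac{\eta\mu}{2}\bigr)\tfrac{\theta}{\eta}\E{\norm{\mX^{k+1} - \mX^\star}_\mI^2}$ matches the claimed right-hand side exactly. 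The main obstacle is cleanly isolating the noise floor $2n\eta\theta\sigma^2$ without sacrificing the Bregman slack, which is why the stepsize restriction $\eta \leq \tfrac{1}{2L}$ (stricter than the $\eta \leq \tfrac{1}{L}$ of Option~B) is needed: it ensures $1 - 2\eta L \geq 0$ so the Bregman term can be discarded rather than controlled by a smooth--strongly-convex telescoping argument.
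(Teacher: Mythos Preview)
Your proposal is correct and follows essentially the same approach as the paper's proof: both start from the identity $x_i^{k+1} - x^\star - \eta(z_i^k - z_i^\star) = x_i^k - x^\star - \eta(\nabla f_i(x_i^k,\xi_i^k) - \nabla f_i(x^\star))$, bound the cross term via strong convexity~\eqref{eq:22} to extract $B_{f_i}(x_i^k,x^\star)+\tfrac{\mu}{2}\norm{x_i^k-x^\star}_2^2$, split the stochastic gradient term by inserting $\nabla f_i(x^\star,\xi_i^k)$ and applying~\eqref{eq:20}--\eqref{eq:21}, drop the resulting $-2\eta(1-2\eta L)B_{f_i}$ term using $\eta\le \tfrac{1}{2L}$, and then expand the square and rearrange exactly as in Option~B. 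Your explanation of why the stricter stepsize condition is needed also matches the paper's reasoning.
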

	
	\begin{proof}
		From Line~\ref{line:3} of Algorithm~\ref{alg:1} it follows that
		\begin{eqnarray*}
		\E{\norm{x_i^{k+1} - x^\star - \eta(z_i^k - z_i^\star)}_2^2}
		&=&
		\E{\norm{x_i^k - x^\star - \eta (\nabla f_i(x_i^k,\xi_i^k) - \nabla f_i(x^\star)) }_2^2}\\
		&=&
		\norm{x_i^k - x^\star}_2^2 - 2\eta\E{\dotprod{\nabla f_i(x_i^k,\xi_i^k) - \nabla f_i(x^\star)}{x_i^k - x^\star}}\\
		&+&
		\eta^2\E{\norm{\nabla f_i(x_i^k,\xi_i^k) - \nabla f_i(x^\star)}_2^2}\\
		&\overset{\eqref{eq:19}}{\leq}&
		\norm{x_i^k - x^\star}_2^2 - 2\eta\dotprod{\nabla f_i(x_i^k) - \nabla f_i(x^\star)}{x_i^k - x^\star}\\
		&+&
		2\eta^2\E{\norm{\nabla f_i(x_i^k,\xi_i^k) - \nabla f_i(x^\star,\xi_i^k)}_2^2 + \norm{\nabla f_i(x^\star,\xi_i^k) - \nabla f_i(x^\star)}_2^2}
		\\
		&\overset{\eqref{eq:22},\eqref{eq:20},\eqref{eq:21}}{\leq}&
		(1-\eta\mu)\norm{x_i^k - x^\star}_2^2 - 2\eta B_{f_i}(x_i^k, x_i^\star)
		+
		4L\eta^2 B_{f_i}(x_i^k,x^\star) + 2\eta^2\sigma_i^2
		\\
		&\leq&
		(1-\eta\mu)\norm{x_i^k - x_i^\star}_2^2 + 2\eta^2\sigma_i^2,
		\end{eqnarray*}
		where we used $\eta \leq \frac{1}{2L}$ in the last inequality.
		This leads to
		\begin{align*}
		(1-\eta\mu)\norm{\mX^k - \mX^\star}_\mI^2
		&= 
		\sum_{i=1}^n  (1-\eta\mu)\norm{x_i^k - x_i^\star}_2^2
		\geq
		\E{\sum_{i=1}^n  \norm{x_i^{k+1} - x^\star - \eta(z_i^k - z_i^\star)}_2^2} - 2n\eta^2\sigma^2
		\\
		&=
		\E{\norm{\mX^{k+1} - \mX^\star - \eta(\mZ^k - \mZ^\star)}_{\mI}^2} - 2n\eta^2\sigma^2\\
		&=
		\eta^2 \norm{\mZ^k - \mZ^\star}_\mI^2 + \E{\norm{\mX^{k+1} - \mX^\star}_\mI^2 
		-
		2\eta\dotprod{\mX^{k+1} - \mX^\star}{\mZ^k - \mZ^\star} }- 2n\eta^2\sigma^2.
		\end{align*}
		After rearranging, we get
		\begin{align*}
		-2\theta \E{\dotprod{\mX^{k+1} - \mX^\star}{\mZ^k - \mZ^\star}}
		&\leq
		-\eta \theta \norm{\mZ^k - \mZ^\star}_\mI^2
		-
		\frac{\theta\mu}{2}\E{\norm{\mX^{k+1} - \mX^\star}_\mI^2} + 2n\eta\theta\sigma^2\\
		&+
		(1-\eta\mu)\frac{\theta}{\eta}\norm{\mX^k - \mX^\star}_\mI^2
		-
		\left(1 - \frac{\eta\mu}{2}\right)\frac{\theta}{\eta}\E{\norm{\mX^{k+1} - \mX^\star}_\mI^2},
		\end{align*}
		which concludes the proof.
		
	\end{proof}

	\begin{theorem}[Convergence of Algorithm~\ref{alg:1}, Option C]\label{thm:C}
		Let $\Psi_C^k$ be a Lyapunov function defined as follows:
		\begin{equation}
		\Psi_C^k = \norm{\mZ^k - \mZ^\star}_{\mW^\dagger}^2 + \frac{(1 - \eta\mu/2)\theta}{\eta}\norm{\mX^k - \mX^\star}_\mI^2 + \frac{8\theta^2\omega \max_{(i,j)\in E}w_{ij}}{\alpha}
		\norm{\mH^{k} - \mX^\star}_\mI^2.
		\end{equation}
		Let $\rho_C$ be defined as follows:
		\begin{equation}\label{eq:29}
		\rho_C = \max \left\{2(\omega + 1), \frac{2(\lambda_{\max}(\mW) + 12\omega\max_{(i,j)\in E}w_{ij})}{\eta\mu \lambda_{\min}^+(\mW)}\right\}^{-1}.
		\end{equation}
		Choosing stepsize $\theta$
		\begin{equation}\label{eq:25}
		\theta =\frac{\mu}{2\lambda_{\max}(\mW) + 24\omega\max_{(i,j)\in E}w_{ij}},
		\end{equation}
		stepsize $\eta \leq \frac{1}{2L}$ and stepsize $\alpha = \frac{1}{\omega + 1}$ gives the following inequality:
		\begin{equation}\label{eq:28}
		\E{\Psi_C^{k+1}} \leq (1-\rho_C)\Psi_C^k +  2n\eta\theta\sigma^2.
		\end{equation}
		
	\end{theorem}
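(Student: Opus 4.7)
The plan is to mirror the argument for \textbf{Option B} (Theorem~\ref{thm:B}), substituting the stochastic primal step bound~\eqref{eq:24} for its deterministic counterpart~\eqref{eq:17}, and then carefully tracking the additional stochastic noise term $2n\eta\theta\sigma^2$ through the Lyapunov analysis.

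First, I would start from the dual-step estimate~\eqref{eq:2}, apply the stochastic primal bound~\eqref{eq:24} to replace the cross term $-2\theta \E{\dotprod{\mX^{k+1}-\mX^\star}{\mZ^k-\mZ^\star}}$, and then use~\eqref{eq:5}--\eqref{eq:6} to convert $\norm{\mX^{k+1}-\mX^\star}_\mW^2 \leq \lambda_{\max}(\mW)\norm{\mX^{k+1}-\mX^\star}_\mI^2$ and $\norm{\mZ^k-\mZ^\star}_\mI^2 \geq \lambda_{\min}^+(\mW) \norm{\mZ^k-\mZ^\star}_{\mW^\dagger}^2$. Then I invoke the variance bound~\eqref{eq:3} to control $\E{\Sigma^k}$, which introduces positive multiples of $\norm{\mH^k-\mX^\star}_\mI^2$ and $\norm{\mX^{k+1}-\mX^\star}_\mI^2$. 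This gives a one-step recursion involving $\norm{\mZ^k-\mZ^\star}_{\mW^\dagger}^2$, $\norm{\mX^k-\mX^\star}_\mI^2$, $\norm{\mX^{k+1}-\mX^\star}_\mI^2$, $\norm{\mH^k-\mX^\star}_\mI^2$, and the extra additive $2n\eta\theta\sigma^2$ term carried over from~\eqref{eq:24}.

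Next I combine this estimate with the $h_i^k$ descent inequality~\eqref{eq:7}, scaled by $\frac{8\theta^2 \omega \max_{(i,j)\in E} w_{ij}}{\alpha}$, exactly as in the proofs of Theorems~\ref{thm:A} and~\ref{thm:B}. The cross term $\alpha \E{\norm{\mX^{k+1}-\mX^\star}_\mI^2}$ from~\eqref{eq:7}, multiplied by the scaling, contributes $8\theta^2 \omega \max_{(i,j)\in E} w_{ij}\E{\norm{\mX^{k+1}-\mX^\star}_\mI^2}$; added to the $4\theta^2 \omega \max_{(i,j)\in E} w_{ij}\E{\norm{\mX^{k+1}-\mX^\star}_\mI^2}$ already coming from~\eqref{eq:3}, this totals $12\theta^2 \omega \max_{(i,j)\in E} w_{ij}$, matching the constant that appears in the choice~\eqref{eq:25}.

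The key step is then to verify that with the stepsize choice~\eqref{eq:25}, the coefficient $\theta\bigl(\tfrac{\mu}{2} - \theta[\lambda_{\max}(\mW) + 12\omega \max_{(i,j)\in E} w_{ij}]\bigr)$ of $\E{\norm{\mX^{k+1}-\mX^\star}_\mI^2}$ is non-negative, so that this entire term can be dropped. After this cancellation, the three remaining Lyapunov terms contract with rates $(1-\eta\theta\lambda_{\min}^+(\mW))$ on the dual component, $(1-\eta\mu/2)$ (after the identity $\tfrac{1-\eta\mu}{1-\eta\mu/2} \leq 1 - \tfrac{\eta\mu}{2}$ used in the proof of Theorem~\ref{thm:B}) on the primal component, and $(1-\alpha/2)$ on the $h$-component. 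Taking the worst of these three rates yields the definition of $\rho_C$ in~\eqref{eq:29}, and the leftover $2n\eta\theta\sigma^2$ appears additively, giving exactly~\eqref{eq:28}.

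The main obstacle I anticipate is bookkeeping: since $\eta \leq \tfrac{1}{2L}$ is now a free parameter rather than being pinned at $1/L$, the rate $\rho_C$ picks up an extra factor of $\eta\mu$ in the dual-contraction direction (compare the $\eta\theta\lambda_{\min}^+(\mW)$ contraction with the $\theta\lambda_{\min}^+(\mW)/L$ term in Theorem~\ref{thm:A}), and I must ensure these two rates balance so that taking the maximum gives the clean form in~\eqref{eq:29}. Verifying the smoothness-in-expectation bound~\eqref{eq:21} is used correctly in the stochastic primal step (to absorb $4L\eta^2 B_{f_i}$ into $2\eta B_{f_i}$ via $\eta \leq 1/(2L)$) is the one place where the condition on $\eta$ becomes essential; everything else is a mechanical adaptation of the Option~B proof.
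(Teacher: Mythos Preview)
Your proposal is correct and follows essentially the same route as the paper's proof: start from the dual step~\eqref{eq:2}, substitute the stochastic primal bound~\eqref{eq:24} (in place of~\eqref{eq:17}), apply~\eqref{eq:5}--\eqref{eq:6} and~\eqref{eq:3}, combine with~\eqref{eq:7} scaled by $\tfrac{8\theta^2\omega\max_{(i,j)\in E}w_{ij}}{\alpha}$, use~\eqref{eq:25} to kill the $\norm{\mX^{k+1}-\mX^\star}_\mI^2$ coefficient, and read off the three contraction rates. The only small clarification is that the primal rate $1-\eta\mu/2$ is always dominated by the dual rate $1-\eta\theta\lambda_{\min}^+(\mW)$ (since $\lambda_{\max}(\mW)\geq \lambda_{\min}^+(\mW)$ implies $\theta\lambda_{\min}^+(\mW)\leq \mu/2$), which is why only two terms appear in the $\max$ defining $\rho_C$.
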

	
	\begin{proof}
		We start with rewriting \eqref{eq:2}:
		\begin{eqnarray*}
		\E{\norm{\mZ^{k+1} - \mZ^\star}_{\mW^{\dagger}}^2}
		&\overset{\eqref{eq:2}}{\leq}&
		\norm{\mZ^k - \mZ^\star}_{\mW^{\dagger}}^2
		+
		\E{-
			2\theta\dotprod{\mX^{k+1} - \mX^\star}{\mZ^k - \mZ^\star}
			+
			\theta^2
			\norm{\mX^{k+1} - \mX^\star}_\mW^2
			+
			\Sigma^k
		}\\
		&\overset{\eqref{eq:24}}{\leq}&
		\norm{\mZ^k - \mZ^\star}_{\mW^{\dagger}}^2
		-
		\eta\theta\norm{\mZ^k - \mZ^\star}_\mI^2
		-
		\frac{\theta\mu}{2}\E{\norm{\mX^{k+1} - \mX^\star}_\mI^2}
		+
		\theta^2\E{\norm{\mX^{k+1} - \mX^\star}_\mW^2}
		\\
		&+&
		\E{\Sigma^k}
		+
		(1-\eta\mu)\frac{\theta}{\eta}\norm{\mX^k - \mX^\star}_\mI^2
		-
		\left(1 - \frac{\eta\mu}{2}\right)\frac{\theta}{\eta}\E{\norm{\mX^{k+1} - \mX^\star}_\mI^2}
		+
		2n\eta\theta\sigma^2
		\\
		&\overset{\eqref{eq:5},\eqref{eq:6}}{\leq}&
		(
		1 - \eta\theta\lambda_{\min}^+(\mW)
		)
		\norm{\mZ^k - \mZ^\star}_{\mW^{\dagger}}^2
		-
		\theta\left(\frac{\mu}{2} - \theta\lambda_{\max}(\mW)\right)\E{\norm{\mX^{k+1} - \mX^\star}_\mI^2}
		+
		\E{\Sigma^k}\\
		&+&
		(1-\eta\mu)\frac{\theta}{\eta}\norm{\mX^k - \mX^\star}_\mI^2
		-
		\left(1 - \frac{\eta\mu}{2}\right)\frac{\theta}{\eta}\E{\norm{\mX^{k+1} - \mX^\star}_\mI^2}
		+
		2n\eta\theta\sigma^2
		\\
		&\overset{\eqref{eq:3}}{\leq}&
		(
		1 - \eta\theta\lambda_{\min}^+(\mW)
		)
		\norm{\mZ^k - \mZ^\star}_{\mW^{\dagger}}^2
		+
		4\theta^2\omega \max_{(i,j)\in E}w_{ij}\norm{\mH^{k} - \mX_\star}_{\mI}^2\\
		&-&
		\theta\left(
		\frac{\mu}{2} - \theta\left[\lambda_{\max}(\mW) + 4\omega\max_{(i,j)\in E}w_{ij} \right]
		\right)\E{\norm{\mX^{k+1} - \mX^\star}_\mI^2}\\
		&+&
		(1-\eta\mu)\frac{\theta}{\eta}\norm{\mX^k - \mX^\star}_\mI^2
		-
		\left(1 - \frac{\eta\mu}{2}\right)\frac{\theta}{\eta}\E{\norm{\mX^{k+1} - \mX^\star}_\mI^2}
		+
		2n\eta\theta\sigma^2
		\end{eqnarray*}
		Now, we combine this with \eqref{eq:7}:
		\begin{align*}
		\E{\Psi_C^{k+1}}
		&\leq
		(1 - \eta\theta\lambda_{\min}^+(\mW))\norm{\mZ^k - \mZ^\star}_{\mW^{\dagger}}^2
		+
		(1-\eta\mu)\frac{\theta}{\eta}\norm{\mX^k - \mX^\star}_\mI^2
		+
		4\theta^2\omega \max_{(i,j)\in E}w_{ij}\norm{\mH^{k} - \mX_\star}_{\mI}^2\\
		&-
		\theta\left(
		\frac{\mu}{2} - \theta\left[\lambda_{\max}(\mW) + 4\omega\max_{(i,j)\in E}w_{ij} \right]
		\right)\E{\norm{\mX^{k+1} - \mX^\star}_\mI^2}\\
		&+
		(1-\alpha)\frac{8\theta^2\omega \max_{(i,j)\in E}w_{ij}}{\alpha}
		\norm{\mH^{k} - \mX^\star}_\mI^2
		+
		8\theta^2\omega \max_{(i,j)\in E}w_{ij}
		\E{\norm{\mX^{k+1} - \mX^\star}_\mI^2}
		+
		2n\eta\theta\sigma^2\\
		&=
		(1 - \eta\theta\lambda_{\min}^+(\mW))\norm{\mZ^k - \mZ^\star}_{\mW^{\dagger}}^2
		+
		(1-\eta\mu)\frac{\theta}{\eta}\norm{\mX^k - \mX^\star}_\mI^2
		+
		2n\eta\theta\sigma^2\\
		&+
		\left(1-\frac{\alpha}{2}\right)\frac{8\theta^2\omega \max_{(i,j)\in E}w_{ij}}{\alpha}
		\norm{\mH^{k} - \mX^\star}_\mI^2\\
		&-
		\theta\left(
		\frac{\mu}{2} - \theta\left[\lambda_{\max}(\mW) + 12\omega\max_{(i,j)\in E}w_{ij} \right]
		\right)\E{\norm{\mX^{k+1} - \mX^\star}_\mI^2}.
		\end{align*}
		Using \eqref{eq:25} we get
		\begin{align*}
		\E{\Psi_C^{k+1}}
		&\leq
		(1 - \eta\theta\lambda_{\min}^+(\mW))\norm{\mZ^k - \mZ^\star}_{\mW^{\dagger}}^2
		+
		\left(1 - \frac{\eta\mu}{2 - \eta\mu}\right)\frac{(1 - \eta\mu/2)\theta}{\eta}\norm{\mX^k - \mX^\star}_\mI^2\\
		&+
		\left(1-\frac{\alpha}{2}\right)\frac{8\theta^2\omega \max_{(i,j)\in E}w_{ij}}{\alpha}
		\norm{\mH^{k} - \mX^\star}_\mI^2
		+
		2n\eta\theta\sigma^2\\
		&\leq
		(1 - \eta\theta\lambda_{\min}^+(\mW))\norm{\mZ^k - \mZ^\star}_{\mW^{\dagger}}^2
		+
		\left(1 - \frac{\eta\mu}{2}\right)\frac{(1 - \eta\mu/2)\theta}{\eta}\norm{\mX^k - \mX^\star}_\mI^2\\
		&+
		\left(1-\frac{\alpha}{2}\right)\frac{8\theta^2\omega \max_{(i,j)\in E}w_{ij}}{\alpha}
		\norm{\mH^{k} - \mX^\star}_\mI^2
		+
		2n\eta\theta\sigma^2\\
		&\leq
		(1-\rho_C)\Psi_C^k + 2n\eta\theta\sigma^2,
		\end{align*}
		which concludes the proof.
	\end{proof}

	\begin{corollary}\label{cor:C}
		Let
		\begin{equation}
			\hat{x}^k = \frac{1}{n}\sum_{i=1}^n x_i^k.
		\end{equation}
		Then for any $\epsilon > 0$, choosing stepsize
		\begin{equation}\label{eq:26}
			\eta = \min\left\{\frac{1}{2L},
			\frac{\sqrt{\epsilon}}{4\sigma\sqrt{\omega+1}},
			\frac{\epsilon\mu\lambda_{\min}^+(\mW)}{16\sigma^2(\lambda_{\max}(\mW)+ 12\omega\max_{(i,j)\in E}w_{ij})}\right\}
		\end{equation}
		and number of iterations
		\begin{align}
			k \geq \max&\left\{ 
				2(\omega+1),
				\frac{4L(\lambda_{\max}(\mW) + 12\omega\max_{(i,j)\in E}w_{ij})}{\mu\lambda_{\min}^+(\mW)},\right.\nonumber \\
				&\frac{8\sigma\sqrt{\omega+1}(\lambda_{\max}(\mW) + 12\omega\max_{(i,j)\in E}w_{ij})}{\mu\lambda_{\min}^+(\mW)\sqrt{\epsilon}}, \nonumber \\
				&\left.
				\frac{32\sigma^2(\lambda_{\max}(\mW) + 12\omega\max_{(i,j)\in E}w_{ij})^2}{\epsilon\mu^2(\lambda_{\min}^+(\mW))^2}
			\right\}\log\frac{4\eta\Psi_C^0}{n\theta\epsilon}\label{eq:27}
		\end{align}
		gives
		\begin{equation}
			\E{\norm{\hat{x}^k - x^\star}_2^2} \leq \epsilon.
		\end{equation}
	\end{corollary}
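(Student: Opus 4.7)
}

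The plan is to unroll the one-step contraction \eqref{eq:28} from Theorem~\ref{thm:C} into a standard geometric plus noise bound, translate the Lyapunov bound into a bound on the averaged iterate $\hat x^k$, and then split the tolerance $\epsilon$ into a ``bias'' part (handled by iteration count) and a ``variance'' part (handled by the stepsize $\eta$). Concretely, I would first iterate \eqref{eq:28} to obtain
\begin{equation*}
\E{\Psi_C^{k}} \;\leq\; (1-\rho_C)^k \Psi_C^0 \;+\; \frac{2 n \eta\theta \sigma^2}{\rho_C}\,.
\end{equation*}
Next, because $\hat x^k = \tfrac{1}{n}\sum_i x_i^k$ is a convex combination, Jensen's inequality (or Cauchy--Schwarz) gives $\|\hat x^k - x^\star\|_2^2 \leq \tfrac{1}{n}\|\mX^k-\mX^\star\|_\mI^2$, and the definition of $\Psi_C^k$ together with $1-\eta\mu/2 \geq 1/2$ (valid since $\eta\leq 1/(2L) \leq 1/\mu$) yields
$\|\mX^k-\mX^\star\|_\mI^2 \leq \tfrac{2\eta}{\theta}\Psi_C^k$.
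Combining these,
\begin{equation*}
 \E{\|\hat x^k - x^\star\|_2^2} \;\leq\; \frac{2\eta}{n\theta}\E{\Psi_C^k}\,.
\end{equation*}

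Then I would enforce $\epsilon$-accuracy by requiring each of the two resulting terms to be bounded by $\epsilon/2$. The variance term yields the condition $\frac{4\eta^2 \sigma^2}{\rho_C}\leq \epsilon/2$; substituting $\rho_C^{-1}=\max\{2(\omega+1), \frac{2(\lambda_{\max}(\mW)+12\omega\max_{(i,j)\in E}w_{ij})}{\eta\mu\lambda_{\min}^+(\mW)}\}$ from \eqref{eq:29} and solving case-by-case reproduces exactly the last two entries in the minimum defining $\eta$ in \eqref{eq:26} (the first entry, $1/(2L)$, is the validity hypothesis of Theorem~\ref{thm:C}). The bias term $(1-\rho_C)^k\Psi_C^0 \leq \epsilon/2 \cdot \frac{n\theta}{2\eta}$ gives, after taking logarithms and using $\log\frac{1}{1-\rho_C}\geq \rho_C$, the iteration count $k \geq \rho_C^{-1}\log\frac{4\eta\Psi_C^0}{n\theta\epsilon}$. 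Substituting each of the three admissible values of $\eta$ into $\rho_C^{-1}$ produces the four candidates inside the $\max$ in \eqref{eq:27}: $2(\omega+1)$ (the $\omega$-dependent part of $\rho_C^{-1}$) and the three values of $B(\eta):=\frac{2(\lambda_{\max}(\mW)+12\omega\max_{(i,j)\in E}w_{ij})}{\eta\mu\lambda_{\min}^+(\mW)}$ obtained from $\eta\in\{\tfrac{1}{2L},\tfrac{\sqrt\epsilon}{4\sigma\sqrt{\omega+1}},\tfrac{\epsilon\mu\lambda_{\min}^+(\mW)}{16\sigma^2(\lambda_{\max}(\mW)+12\omega\max_{(i,j)\in E}w_{ij})}\}$.

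The main bookkeeping obstacle is a consistent accounting across the $\max/\min$: the noise condition must be verified separately in the two regimes where $A=2(\omega+1)$ or $B(\eta)$ dominates in $\rho_C^{-1}$, and correspondingly the stepsize choice must satisfy the stronger of the two, which is precisely the rationale behind taking $\eta$ as a minimum of three terms. Once these cases are matched, the corollary follows directly; no new analytic ingredients are required beyond Theorem~\ref{thm:C}, Jensen's inequality, and elementary manipulations of geometric sums and logarithms.
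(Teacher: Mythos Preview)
Your proposal is correct and follows essentially the same route as the paper's own proof: unroll \eqref{eq:28}, pass from $\Psi_C^k$ to $\|\hat x^k-x^\star\|_2^2$ via $\|\hat x^k - x^\star\|_2^2 \leq \tfrac{1}{n}\|\mX^k-\mX^\star\|_\mI^2 \leq \tfrac{2\eta}{n\theta}\Psi_C^k$, and then split $\epsilon$ into a variance half (fixing $\eta$ via the two branches of $\rho_C^{-1}$) and a bias half (fixing $k$). Your case analysis for the $\max/\min$ is exactly the bookkeeping the paper leaves implicit, and your computation of $B(\eta)$ at the three admissible values of $\eta$ reproduces the four entries of \eqref{eq:27} on the nose.
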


	\begin{proof}
		Using definition of $\hat{x}^k$ and \eqref{eq:28} we get
		\begin{align*}
			\E{\norm{\hat{x}^k - x^\star}_2^2}
			&\leq
			\frac{1}{n}\E{\norm{\mX^k - \mX^\star}_{\mI}^2}
			\leq
			\E{\frac{\eta}{n\theta(1-\eta\mu/2)}\Psi_C^k}
			\leq
			\frac{2\eta}{n\theta}\E{\Psi_C^k}\\
			&\leq
			\frac{2\eta}{n\theta}\left((1-\rho_C)^k\Psi_C^0 + \frac{2n\eta\theta\sigma^2}{\rho_C}\right)
			=
			(1-\rho_C)^k\frac{2\eta}{n\theta}\Psi_C^0 + \frac{4\eta^2\sigma^2}{\rho_C}.
		\end{align*}
		From \eqref{eq:26} and \eqref{eq:29} it follows that $\frac{4\eta^2\sigma^2}{\rho_C} \leq \frac{\epsilon}{2}$ and hence
		\begin{align*}
			\E{\norm{\hat{x}^k - x^\star}_2^2}
			\leq
			(1-\rho_C)^k\frac{2\eta}{n\theta}\Psi_C^0 + \frac{\epsilon}{2}.
		\end{align*}
		From \eqref{eq:27}, \eqref{eq:26}  and \eqref{eq:29} it follows that $(1-\rho_C)^k \leq \frac{n\theta\epsilon}{4\eta \Psi_C^0}$ and hence
		\begin{align*}
			\E{\norm{\hat{x}^k - x^\star}_2^2}
			\leq
			\frac{\epsilon}{2}+ \frac{\epsilon}{2} = \epsilon.
		\end{align*}
	\end{proof}

	\subsection{Option D}
	
	\begin{lemma}[Primal step, Option D]
		Let $\eta \leq \frac{1}{6L}$. Then the following inequality holds:
		\begin{align}
			-2\theta \E{\dotprod{\mX^{k+1} - \mX^\star}{\mZ^k - \mZ^\star}}
			&\leq
			-\eta \theta \norm{\mZ^k - \mZ^\star}_\mI^2
			-
			\frac{\theta\mu}{2}\E{\norm{\mX^{k+1} - \mX^\star}_\mI^2}\nonumber\\
			&+
			(1-\eta\mu)\frac{\theta}{\eta}\norm{\mX^k - \mX^\star}_\mI^2
			-
			\left(1 - \frac{\eta\mu}{2}\right)\frac{\theta}{\eta}\E{\norm{\mX^{k+1} - \mX^\star}_\mI^2}\nonumber\\
			&+
			8mL\eta\theta\sum_{i=1}^n\left[\left(1 - \frac{1}{2m}\right)B_{f_i}(w_i^k,x^\star) - \E{B_{f_i}(w_i^{k+1},x^\star)} \right].\label{eq:30}
		\end{align}
	\end{lemma}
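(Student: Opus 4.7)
The plan is to mirror the argument used for Option B, with the key new ingredient being the standard SVRG-style variance bound for $g_i^k$. Starting from line~\ref{line:4} of the algorithm and using $z_i^\star=\nabla f_i(x^\star)$, one has the identity
\[
 x_i^{k+1} - x^\star - \eta(z_i^k - z_i^\star) \;=\; x_i^k - x^\star - \eta\bigl(g_i^k - \nabla f_i(x^\star)\bigr),
\]
so that taking conditional expectation over the SVRG sample $j_i^k$ and using unbiasedness $\E[g_i^k] = \nabla f_i(x_i^k)$ yields
\[
 \E\bigl\|x_i^{k+1} - x^\star - \eta(z_i^k - z_i^\star)\bigr\|_2^2
 = \|x_i^k - x^\star\|_2^2 - 2\eta\dotprod{x_i^k - x^\star}{\nabla f_i(x_i^k) - \nabla f_i(x^\star)} + \eta^2 \E\bigl\|g_i^k - \nabla f_i(x^\star)\bigr\|_2^2.
\]

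The key step is to bound the last term. Writing $g_i^k - \nabla f_i(x^\star) = \bigl[\nabla f_{ij_i^k}(x_i^k) - \nabla f_{ij_i^k}(x^\star)\bigr] - \bigl[(\nabla f_{ij_i^k}(w_i^k) - \nabla f_{ij_i^k}(x^\star)) - (\nabla f_i(w_i^k) - \nabla f_i(x^\star))\bigr]$ and noting that the second bracket is mean-zero, I would apply $\|a - b\|^2 \leq 2\|a\|^2 + 2\|b\|^2$ together with the finite-sum smoothness inequality~\eqref{eq:31} to obtain the SVRG variance bound
\[
 \E\bigl\|g_i^k - \nabla f_i(x^\star)\bigr\|_2^2 \;\leq\; 4L\, B_{f_i}(x_i^k, x^\star) + 4L\, B_{f_i}(w_i^k, x^\star).
\]
Combining this with the standard decomposition $\dotprod{x_i^k - x^\star}{\nabla f_i(x_i^k) - \nabla f_i(x^\star)} \geq B_{f_i}(x_i^k,x^\star) + \tfrac{\mu}{2}\|x_i^k-x^\star\|_2^2$, summing over $i$, and translating to the matrix norm $\|\cdot\|_\mI$, I obtain
\[
 \E\bigl\|\mX^{k+1} - \mX^\star - \eta(\mZ^k - \mZ^\star)\bigr\|_\mI^2
 \leq (1-\eta\mu)\|\mX^k - \mX^\star\|_\mI^2 + 2\eta(2L\eta-1)\!\sum_{i}B_{f_i}(x_i^k,x^\star) + 4L\eta^2\!\sum_i B_{f_i}(w_i^k,x^\star).
\]

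Now I substitute this into the identity $-2\theta\dotprod{a}{b} = \tfrac{\theta}{\eta}\|a-\eta b\|^2 - \tfrac{\theta}{\eta}\|a\|^2 - \eta\theta\|b\|^2$ used already in Options B/C with $a = \mX^{k+1}-\mX^\star$ and $b = \mZ^k-\mZ^\star$. This reproduces the four terms appearing in~\eqref{eq:30} that also appear in~\eqref{eq:17}, and leaves the extra terms
\[
 2\theta(2L\eta - 1)\sum_i B_{f_i}(x_i^k,x^\star) + 4L\eta\theta\sum_i B_{f_i}(w_i^k,x^\star).
\]
To match the telescoping form in the statement, I use the expectation identity implied by line~\ref{line:5},
\[
 \E B_{f_i}(w_i^{k+1},x^\star) = \tfrac{1}{m}B_{f_i}(x_i^k,x^\star) + \bigl(1-\tfrac{1}{m}\bigr)B_{f_i}(w_i^k,x^\star),
\]
which rearranges to $B_{f_i}(x_i^k,x^\star) = m\bigl[\E B_{f_i}(w_i^{k+1},x^\star) - (1-\tfrac{1}{m})B_{f_i}(w_i^k,x^\star)\bigr]$, equivalently, the target RHS quantity $8mL\eta\theta\bigl[(1-\tfrac{1}{2m})B_{f_i}(w_i^k) - \E B_{f_i}(w_i^{k+1})\bigr]$ equals $4L\eta\theta B_{f_i}(w_i^k,x^\star) - 8L\eta\theta B_{f_i}(x_i^k,x^\star)$.

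Thus the claim reduces to the coefficient inequality $2\theta(2L\eta - 1) \leq -8L\eta\theta$, i.e. $12L\eta \leq 2$, which holds precisely for $\eta \leq \tfrac{1}{6L}$. The main obstacle is really just this bookkeeping: finding the SVRG variance split that produces matching coefficients $4L+4L$ so that the $B_{f_i}(x_i^k,x^\star)$ term can be absorbed at the threshold $\eta\leq\tfrac{1}{6L}$, and then correctly applying the tower/telescoping identity for $w_i^{k+1}$ to cast the residual Bregman terms in the compact form displayed in~\eqref{eq:30}.
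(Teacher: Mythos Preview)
Your proposal is correct and follows essentially the same approach as the paper's proof: both start from the identity $x_i^{k+1} - x^\star - \eta(z_i^k - z_i^\star) = x_i^k - x^\star - \eta(g_i^k - \nabla f_i(x^\star))$, apply the same SVRG variance split yielding $4L\,B_{f_i}(x_i^k,x^\star) + 4L\,B_{f_i}(w_i^k,x^\star)$, and then use the expectation identity for $w_i^{k+1}$ from line~\ref{line:5} to convert the residual Bregman terms into the telescoping form, with the condition $\eta \leq \tfrac{1}{6L}$ arising from the same coefficient balance $-2\eta + 12L\eta^2 \leq 0$. The only cosmetic difference is that the paper substitutes the identity into the $B_{f_i}(w_i^k,x^\star)$ term and then drops the combined $B_{f_i}(x_i^k,x^\star)$ coefficient, whereas you expand the target expression $8mL\eta\theta\bigl[(1-\tfrac{1}{2m})B_{f_i}(w_i^k,x^\star) - \E B_{f_i}(w_i^{k+1},x^\star)\bigr]$ directly and match coefficients; both routes are equivalent.
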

	
	\begin{proof}
		From Line~\ref{line:4} of Algorithm~\ref{alg:1} it follows that
		\begin{eqnarray*}
			\E{\norm{x_i^{k+1} - x^\star - \eta(z_i^k - z_i^\star)}_2^2}
			&=&
			\E{\norm{x_i^k - x^\star - \eta (\nabla f_{ij_i^k}(x_i^k) - \nabla f_{ij_i^k}(w_i^k) + \nabla f_i(w_i^k) - \nabla f_i(x^\star)) }_2^2}\\
			&=&
			\norm{x_i^k - x^\star}_2^2 
			-
			2\eta\dotprod{\nabla f_i(x_i^k) - \nabla f_i(x^\star)}{x_i^k - x^\star}\\
			&+&
			\eta^2\E{\norm{\nabla f_{ij_i^k}(x_i^k) - \nabla f_{ij_i^k}(w_i^k) + \nabla f_i(w_i^k) - \nabla f_i(x^\star)}_2^2}\\
			&\overset{\eqref{eq:22}}{\leq}&
			(1-\eta\mu)\norm{x_i^k - x^\star}_2^2
			-
			2\eta B_{f_i}(x_i^k, x^\star)
			+
			2\eta^2\E{\norm{\nabla f_{ij_i^k}(x_i^k) - \nabla f_{ij_i^k}(x^\star) }_2^2}\\
			&+&
			2\eta^2\E{\norm{\nabla f_{ij_i^k}(x^\star)  - \nabla f_i(x^\star) - \nabla f_{ij_i^k}(w_i^k) + \nabla f_i(w_i^k)}_2^2}\\
			&\leq&
			(1-\eta\mu)\norm{x_i^k - x^\star}_2^2
			-
			2\eta B_{f_i}(x_i^k, x^\star)
			+
			\frac{2\eta^2}{m}\sum_{j=1}^m \norm{\nabla f_{ij}(x_i^k) - \nabla f_{ij}(x^\star) }_2^2\\
			&+&
			\frac{2\eta^2}{m}\sum_{j=1}^m \norm{\nabla f_{ij}(w_i^k) - \nabla f_{ij}(x^\star) }_2^2\\
			&\overset{\eqref{eq:31}}{\leq}&
			(1-\eta\mu)\norm{x_i^k - x^\star}_2^2
			-
			2\eta B_{f_i}(x_i^k, x^\star)
			+
			4L\eta^2 B_{f_i}(x_i^k, x^\star)
			+
			4L\eta^2 B_{f_i}(w_i^k, x^\star).
		\end{eqnarray*}
		From Line~\ref{line:5} of Algorithm~\ref{alg:1} it follows that
		\begin{align*}
			\E{B_{f_i}(w_i^{k+1},x^\star)} =\left(1 - \frac{1}{m}\right)B_{f_i}(w_i^k,x^\star) +  \frac{1}{m}B_{f_i}(x_i^k, x_i^\star),
		\end{align*}
		which gives
		\begin{align*}
			B_{f_i}(w_i^k, x_i^\star) \leq 2m\left[\left(1 - \frac{1}{2m}\right)B_{f_i}(w_i^k,x^\star) - \E{B_{f_i}(w_i^{k+1},x^\star)} \right]+ 2B_{f_i}(x_i^k, x_i^\star),
		\end{align*}
		and hence using stepsize $\eta \leq \frac{1}{6L}$ we get
		\begin{align*}
			\E{\norm{x_i^{k+1} - x^\star - \eta(z_i^k - z_i^\star)}_2^2}
			&\leq
			(1-\eta\mu)\norm{x_i^k - x^\star}_2^2
			-
			2\eta B_{f_i}(x_i^k, x^\star)
			+
			12L\eta^2 B_{f_i}(x_i^k, x^\star)\\
			&+
			8mL\eta^2 \left[\left(1 - \frac{1}{2m}\right)B_{f_i}(w_i^k,x^\star) - \E{B_{f_i}(w_i^{k+1},x^\star)} \right]\\
			&\leq			
			(1-\eta\mu)\norm{x_i^k - x^\star}_2^2
			+
			8mL\eta^2 \left[\left(1 - \frac{1}{2m}\right)B_{f_i}(w_i^k,x^\star) - \E{B_{f_i}(w_i^{k+1},x^\star)} \right].
		\end{align*}
		This leads to
		\begin{align*}
			(1-\eta\mu)\norm{\mX^k - \mX^\star}_\mI^2
			&= 
			\sum_{i=1}^n  (1-\eta\mu)\norm{x_i^k - x_i^\star}_2^2
			\geq
			\sum_{i=1}^n  \E{\norm{x_i^{k+1} - x^\star - \eta(z_i^k - z_i^\star)}_2^2}\\
			&-
			8mL\eta^2 \sum_{i=1}^n\left[\left(1 - \frac{1}{2m}\right)B_{f_i}(w_i^k,x^\star) - \E{B_{f_i}(w_i^{k+1},x^\star)} \right]
			\\
			&=
			\E{\norm{\mX^{k+1} - \mX^\star - \eta(\mZ^k - \mZ^\star)}_{\mI}^2}\\
			&-
			8mL\eta^2 \sum_{i=1}^n\left[\left(1 - \frac{1}{2m}\right)B_{f_i}(w_i^k,x^\star) - \E{B_{f_i}(w_i^{k+1},x^\star)} \right]\\
			&=
			\E{\norm{\mX^{k+1} - \mX^\star}_\mI^2} + \eta^2 \norm{\mZ^k - \mZ^\star}_\mI^2
			-
			2\eta\E{\dotprod{\mX^{k+1} - \mX^\star}{\mZ^k - \mZ^\star}}\\
			&-
			8mL\eta^2 \sum_{i=1}^n\left[\left(1 - \frac{1}{2m}\right)B_{f_i}(w_i^k,x^\star) - \E{B_{f_i}(w_i^{k+1},x^\star)} \right].
		\end{align*}
		After rearranging, we get
		\begin{align*}
			-2\theta \E{\dotprod{\mX^{k+1} - \mX^\star}{\mZ^k - \mZ^\star}}
			&\leq
			-\eta \theta \norm{\mZ^k - \mZ^\star}_\mI^2
			-
			\frac{\theta\mu}{2}\E{\norm{\mX^{k+1} - \mX^\star}_\mI^2}\\
			&+
			(1-\eta\mu)\frac{\theta}{\eta}\norm{\mX^k - \mX^\star}_\mI^2
			-
			\left(1 - \frac{\eta\mu}{2}\right)\frac{\theta}{\eta}\E{\norm{\mX^{k+1} - \mX^\star}_\mI^2}\\
			&+
			8mL\eta\theta\sum_{i=1}^n\left[\left(1 - \frac{1}{2m}\right)B_{f_i}(w_i^k,x^\star) - \E{B_{f_i}(w_i^{k+1},x^\star)} \right],
		\end{align*}
		which concludes the proof.
		
	\end{proof}
	
	\begin{theorem}[Convergence of Algorithm~\ref{alg:1}, Option D]\label{thm:D}
		Let $\Psi_D^k$ be a Lyapunov function which is defined as follows:
		\begin{equation}
			\Psi_D^k =\norm{\mZ^k - \mZ^\star}_{\mW^\dagger}^2 + \frac{(1 - \eta\mu/2)\theta}{\eta}\norm{\mX^k - \mX^\star}_\mI^2 + \frac{8\theta^2\omega \max_{(i,j)\in E}w_{ij}}{\alpha}
			\norm{\mH^{k} - \mX^\star}_\mI^2
			+
			8mL\eta\theta\sum_{i=1}^nB_{f_i}(w_i^k,x^\star).
		\end{equation}
		Let $\rho_D$ be defined as follows:
		\begin{equation}
			\rho_D = \max \left\{2m, 2(\omega + 1), \frac{12L(\lambda_{\max}(\mW) + 12\omega\max_{(i,j)\in E}w_{ij})}{\mu \lambda_{\min}^+(\mW)}\right\}^{-1}.
		\end{equation}
		Choosing stepsize $\theta$
		\begin{equation}\label{eq:32}
			\theta = \frac{\mu}{2\lambda_{\max}(\mW) + 24\omega\max_{(i,j)\in E}w_{ij}},
		\end{equation}
		stepsize $\eta = \frac{1}{6L}$ and stepsize $\alpha = \frac{1}{\omega + 1}$ gives the following inequality:
		\begin{equation}
			\E{\Psi_D^{k+1}} \leq (1-\rho_D)\Psi_D^k.
		\end{equation}
		
	\end{theorem}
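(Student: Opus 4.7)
The plan is to mirror the structure of the proofs for Options A, B, C, replacing the primal-step lemma by the freshly established bound~\eqref{eq:30} which accounts for the SVRG-style variance reduction. Start from the dual inequality~\eqref{eq:2} and substitute the new primal bound~\eqref{eq:30} for the cross-term $-2\theta\,\EE\dotprod{\mX^{k+1}-\mX^\star}{\mZ^k-\mZ^\star}$. Then apply~\eqref{eq:5} and \eqref{eq:6} to convert $\theta^2\norm{\mX^{k+1}-\mX^\star}_\mW^2$ into a multiple of $\norm{\mX^{k+1}-\mX^\star}_\mI^2$ and to lower-bound $\eta\theta\norm{\mZ^k-\mZ^\star}_\mI^2$ by $\eta\theta\lambda_{\min}^+(\mW)\norm{\mZ^k-\mZ^\star}_{\mW^\dagger}^2$. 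Insert the quantization-noise bound~\eqref{eq:3} and combine with the $\mH$-contraction~\eqref{eq:7} exactly as in the proof of Theorem~\ref{thm:B}. This produces five contributions on the right-hand side: the $\mZ$-term, the $\mX^k$-term, the new $\mX^{k+1}$-term, the $\mH^k$-term, and the new Bregman-sum $\sum_i B_{f_i}(w_i^k,x^\star)$.

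Next, I would show that the coefficient of the $\mX^{k+1}$-term is non-positive. With $\theta$ chosen as in~\eqref{eq:32}, the factor $\frac{\mu}{2}-\theta[\lambda_{\max}(\mW)+12\omega\max w_{ij}]$ is non-negative, so this term can be dropped. The $\mZ$-term contracts by $1-\eta\theta\lambda_{\min}^+(\mW)$, the $\mH$-term by $1-\alpha/2$ (via the choice $\alpha=1/(\omega+1)$), the $\mX^k$-term by $1-\eta\mu/2$ after re-absorbing the $(1-\eta\mu)/\eta$ factor into the Lyapunov coefficient $(1-\eta\mu/2)\theta/\eta$, and the Bregman sum contracts by $1-1/(2m)$ because of the factor in~\eqref{eq:30}. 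Each of these four rates is at least $\rho_D$ by definition, so summing the four pieces with the Lyapunov weights gives $\EE[\Psi_D^{k+1}]\le(1-\rho_D)\Psi_D^k$.

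The main obstacle I expect is bookkeeping the Bregman divergence term cleanly: one must verify that the coefficient $8mL\eta\theta$ multiplying $B_{f_i}(w_i^k,x^\star)$ in~\eqref{eq:30} matches the Lyapunov weight so that the ``incoming'' $B_{f_i}(w_i^{k+1},x^\star)$ is exactly cancelled by the next-step Lyapunov value, while the leftover $-\frac{1}{2m}$ factor contributes the $1/(2m)$ rate to the $\rho_D$ bound. The stepsize restriction $\eta=1/(6L)$ (rather than $1/L$ as in Option B) is dictated by the primal-step lemma, where it is needed to absorb $12L\eta^2 B_{f_i}(x_i^k,x^\star)$ into $2\eta B_{f_i}(x_i^k,x^\star)$; once that is done, the rest of the argument is a direct analogue of the Option B proof, and no new structural ideas are required.
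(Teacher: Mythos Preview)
Your plan is correct and matches the paper's proof essentially step for step: start from~\eqref{eq:2}, plug in the Option~D primal bound~\eqref{eq:30}, apply~\eqref{eq:5},~\eqref{eq:6},~\eqref{eq:3}, combine with~\eqref{eq:7}, drop the $\mX^{k+1}$-term via the choice~\eqref{eq:32}, and read off the four contraction factors $1-\eta\theta\lambda_{\min}^+(\mW)$, $1-\eta\mu/2$, $1-\alpha/2$, $1-1/(2m)$, each bounded by $1-\rho_D$. Your bookkeeping remarks on the Bregman term and the need for $\eta=1/(6L)$ are also exactly what the paper does.
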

	
	\begin{proof}
		We start with rewriting \eqref{eq:2}:
		\begin{eqnarray*}
			\E{\norm{\mZ^{k+1} - \mZ^\star}_{\mW^{\dagger}}^2}
			&\overset{\eqref{eq:2}}{\leq}&
			\norm{\mZ^k - \mZ^\star}_{\mW^{\dagger}}^2
			+
			\E{-
				2\theta\dotprod{\mX^{k+1} - \mX^\star}{\mZ^k - \mZ^\star}
				+
				\theta^2
				\norm{\mX^{k+1} - \mX^\star}_\mW^2
				+
				\Sigma^k
			}\\
			&\overset{\eqref{eq:30}}{\leq}&
			\norm{\mZ^k - \mZ^\star}_{\mW^{\dagger}}^2
			-
			\eta\theta\norm{\mZ^k - \mZ^\star}_\mI^2
			-
			\E{\frac{\theta\mu}{2}\norm{\mX^{k+1} - \mX^\star}_\mI^2
			+
			\theta^2\norm{\mX^{k+1} - \mX^\star}_\mW^2
			+
			\Sigma^k}\\
			&+&
			(1-\eta\mu)\frac{\theta}{\eta}\norm{\mX^k - \mX^\star}_\mI^2
			-
			\left(1 - \frac{\eta\mu}{2}\right)\frac{\theta}{\eta}\E{\norm{\mX^{k+1} - \mX^\star}_\mI^2}
			\\
			&+&
			8mL\eta\theta\sum_{i=1}^n\left[\left(1 - \frac{1}{2m}\right)B_{f_i}(w_i^k,x^\star) - \E{B_{f_i}(w_i^{k+1},x^\star)} \right]\\
			&\overset{\eqref{eq:5},\eqref{eq:6}}{\leq}&
			(
			1 - \eta\theta\lambda_{\min}^+(\mW)
			)
			\norm{\mZ^k - \mZ^\star}_{\mW^{\dagger}}^2
			-
			\theta\left(\frac{\mu}{2} - \theta\lambda_{\max}(\mW)\right)\E{\norm{\mX^{k+1} - \mX^\star}_\mI^2}
			+
			\E{\Sigma^k}\\
			&+&
			(1-\eta\mu)\frac{\theta}{\eta}\norm{\mX^k - \mX^\star}_\mI^2
			-
			\left(1 - \frac{\eta\mu}{2}\right)\frac{\theta}{\eta}\E{\norm{\mX^{k+1} - \mX^\star}_\mI^2}
			\\
			&+&
			8mL\eta\theta\sum_{i=1}^n\left[\left(1 - \frac{1}{2m}\right)B_{f_i}(w_i^k,x^\star) - \E{B_{f_i}(w_i^{k+1},x^\star)} \right]\\
			&\overset{\eqref{eq:3}}{\leq}&
			(
			1 - \eta\theta\lambda_{\min}^+(\mW)
			)
			\norm{\mZ^k - \mZ^\star}_{\mW^{\dagger}}^2
			+
			4\theta^2\omega \max_{(i,j)\in E}w_{ij}\norm{\mH^{k} - \mX_\star}_{\mI}^2\\
			&-&
			\theta\left(
			\frac{\mu}{2} - \theta\left[\lambda_{\max}(\mW) + 4\omega\max_{(i,j)\in E}w_{ij} \right]
			\right)\E{\norm{\mX^{k+1} - \mX^\star}_\mI^2}\\
			&+&
			(1-\eta\mu)\frac{\theta}{\eta}\norm{\mX^k - \mX^\star}_\mI^2
			-
			\left(1 - \frac{\eta\mu}{2}\right)\frac{\theta}{\eta}\E{\norm{\mX^{k+1} - \mX^\star}_\mI^2}\\
			&+&
			8mL\eta\theta\sum_{i=1}^n\left[\left(1 - \frac{1}{2m}\right)B_{f_i}(w_i^k,x^\star) - \E{B_{f_i}(w_i^{k+1},x^\star)} \right].
		\end{eqnarray*}
		Now, we combine this with \eqref{eq:7}:
		\begin{align*}
			\E{\Psi_D^{k+1}}
			&\leq
			(1 - \eta\theta\lambda_{\min}^+(\mW))\norm{\mZ^k - \mZ^\star}_{\mW^{\dagger}}^2
			+
			(1-\eta\mu)\frac{\theta}{\eta}\norm{\mX^k - \mX^\star}_\mI^2
			+
			\left(1 - \frac{1}{2m}\right)8mL\eta\theta\sum_{i=1}^nB_{f_i}(w_i^k,x^\star)
			\\
			&+
			4\theta^2\omega \max_{(i,j)\in E}w_{ij}\norm{\mH^{k} - \mX_\star}_{\mI}^2
			-
			\theta\left(
			\frac{\mu}{2} - \theta\left[\lambda_{\max}(\mW) + 4\omega\max_{(i,j)\in E}w_{ij} \right]
			\right)\norm{\mX^{k+1} - \mX^\star}_\mI^2\\
			&+
			(1-\alpha)\frac{8\theta^2\omega \max_{(i,j)\in E}w_{ij}}{\alpha}
			\norm{\mH^{k} - \mX^\star}_\mI^2
			+
			8\theta^2\omega \max_{(i,j)\in E}w_{ij}
			\norm{\mX^{k+1} - \mX^\star}_\mI^2\\
			&=
			(1 - \eta\theta\lambda_{\min}^+(\mW))\norm{\mZ^k - \mZ^\star}_{\mW^{\dagger}}^2
			+
			(1-\eta\mu)\frac{\theta}{\eta}\norm{\mX^k - \mX^\star}_\mI^2\\
			&+
			\left(1 - \frac{1}{2m}\right)8mL\eta\theta\sum_{i=1}^nB_{f_i}(w_i^k,x^\star)
			+
			\left(1-\frac{\alpha}{2}\right)\frac{8\theta^2\omega \max_{(i,j)\in E}w_{ij}}{\alpha}
			\norm{\mH^{k} - \mX^\star}_\mI^2\\
			&-
			\theta\left(
			\frac{\mu}{2} - \theta\left[\lambda_{\max}(\mW) + 12\omega\max_{(i,j)\in E}w_{ij} \right]
			\right)\norm{\mX^{k+1} - \mX^\star}_\mI^2.
		\end{align*}
		Using \eqref{eq:32} we get
		\begin{align*}
			\E{\Psi_D^{k+1}}
			&\leq
			(1 - \eta\theta\lambda_{\min}^+(\mW))\norm{\mZ^k - \mZ^\star}_{\mW^{\dagger}}^2
			+
			\left(1 - \frac{\eta\mu}{2 - \eta\mu}\right)\frac{(1 - \eta\mu/2)\theta}{\eta}\norm{\mX^k - \mX^\star}_\mI^2\\
			&+
			\left(1-\frac{\alpha}{2}\right)\frac{8\theta^2\omega \max_{(i,j)\in E}w_{ij}}{\alpha}
			\norm{\mH^{k} - \mX^\star}_\mI^2
			+
			\left(1 - \frac{1}{2m}\right)8mL\eta\theta\sum_{i=1}^nB_{f_i}(w_i^k,x^\star)\\
			&\leq
			(1 - \eta\theta\lambda_{\min}^+(\mW))\norm{\mZ^k - \mZ^\star}_{\mW^{\dagger}}^2
			+
			\left(1 - \frac{\eta\mu}{2}\right)\frac{(1 - \eta\mu/2)\theta}{\eta}\norm{\mX^k - \mX^\star}_\mI^2\\
			&+
			\left(1-\frac{\alpha}{2}\right)\frac{8\theta^2\omega \max_{(i,j)\in E}w_{ij}}{\alpha}
			\norm{\mH^{k} - \mX^\star}_\mI^2
			+
			\left(1 - \frac{1}{2m}\right)8mL\eta\theta\sum_{i=1}^nB_{f_i}(w_i^k,x^\star)\\
			&\leq
			(1-\rho_D)\Psi_D^k,
		\end{align*}
		which concludes the proof.
	\end{proof}

\end{document}